  \let\ifdraftdoc\if@draft
\renewcommand{\qedsymbol}{$\blacksquare$}
\newcommand{\openqedsymbol}{$\square$}
\newcommand{\exercisesymbol}{$\lozenge$}
\newcommand{\obvioussymbol}{$\blacklozenge$}
\newcommand{\openproof}{\renewcommand{\qedsymbol}{\openqedsymbol}}
\newcommand{\exerproof}{\renewcommand{\qedsymbol}{\exercisesymbol}}
\newcommand{\obviousproof}{\renewcommand{\qedsymbol}{\obvioussymbol}}
\DeclareMathAlphabet{\mathcal}{OMS}{cmsy}{m}{n}
\tikzset{baseline=(current bounding box.center)}
\let\LiningNumbers\relax
\DeclareTextFontCommand{\textrmup}{\rmfamily\mdseries\upshape}
\DeclareTextFontCommand{\textbfup}{\rmfamily\bfseries\upshape}
\DeclareTextFontCommand{\textttup}{\ttfamily\mdseries\upshape}
\DeclareTextFontCommand{\textsfup}{\sffamily\mdseries\upshape}
\setlist[enumerate]{font=\upshape}
\DeclareMathAlphabet{\mathpzc}{OT1}{pzc}{m}{it}
\newcommand*{\noprelistbreak}{\@nobreaktrue\nopagebreak}
\newcommand{\hairspace}{\ifmmode\mskip1mu\else\kern0.08em\fi}
\def\abbrdot{\@ifnextchar.{}{.\@\xspace}}
\newcommand{\ie}{i.e\abbrdot}
\newcommand{\resp}{resp\abbrdot}
\newcommand{\opcit}{op.\@ cit\abbrdot}
\newcommand{\Chap}{Ch.\@~}
\newcommand{\Sect}{\S\hairspace}
\newcommand{\Sects}{\S\S\hairspace}
\newcommand{\strong}[1]{\textbfup{#1}}
\renewcommand{\implies}{\ifmmode\Longrightarrow\else$\Rightarrow$\expandafter\xspace\fi}
\renewcommand{\iff}{\ifmmode\Longleftrightarrow\else$\Leftrightarrow$\expandafter\xspace\fi}
\let\internal@prime\prime
\renewcommand{\prime}{\ifmmode\internal@prime\else$\sp{\internal@prime}$\expandafter\xspace\fi}
  \def\nequiv{\not\equiv}
\newcommand{\blank}{\mathord{-}}
\newcommand{\pblank}{\mathord{(-)}}
\newcommand{\embedinto}{\hookrightarrow}
\newcommand{\hoto}{\Rightarrow}
\DeclareMathSymbol{.}{\mathpunct}{letters}{"3A}
\DeclareMathSymbol{:}{\mathrel}{operators}{"3A}
\DeclareMathOperator{\ob}{ob}
\newcommand{\cotens}{\mathbin{\pitchfork}}
\DeclareMathOperator{\discr}{disc}
\let\prolim\varprojlim
\let\indlim\varinjlim
\newcommand{\id}{\mathrm{id}}
\newcommand*{\lr}[3]{\mathopen{}\left#1{#2}\right#3\mathclose{}}
\newcommand*{\lmr}[5]{\mathopen{}\left#1{#2}\,\middle#3\,{#4}\right#5\mathclose{}}
\newcommand*{\ordlr}[3]{\mathord{\left#1{#2}\right#3}}
\newcommand*{\argp}[1]{\ifstrempty{#1}{}{\lr({#1})}}
\newcommand*{\argptwo}[2]{\lr({#1, #2})}
\newcommand*{\argptwotwo}[4]{\lr({#1, #2; #3, #4})}
\newcommand*{\parens}[1]{\lr({#1})}
\newcommand*{\bracket}[1]{\lr[{#1}]}
\newcommand*{\tuple}[1]{\ordlr({#1})}
\newcommand*{\prodtuple}[1]{\lr<{#1}>}
\newcommand*{\setbuilder}[2]{\mathord{\lmr\{{#1}|{#2}\}}}
\newcommand*{\seqbuilder}[2]{\mathord{\lmr({#1}|{#2})}}
  \def\fracslash{\mathbin{/}}
  \def\divslash{\mathbin{/}}
\newcommand*{\optargp}{\new@ifnextchar\bgroup{\argp}{}}
\newcommand*{\optargptwo}{\new@ifnextchar\bgroup{\expandafter\optargptwo@first}{}}
\newcommand*{\optargptwo@first}[1]{\new@ifnextchar\bgroup{\argptwo{#1}}{\argp{#1}}}
\newcommand*{\optargptwotwo}{\new@ifnextchar\bgroup{\expandafter\optargptwotwo@first}{}}
\newcommand*{\optargptwotwo@first}[2]{\new@ifnextchar\bgroup{\argptwotwo{#1}{#2}}{\argptwo{#1}{#2}}}
\newcommand*{\sbtwo}[2]{\sb{\lr({{#1}, {#2}})}}
\newcommand*{\optsb}{\new@ifnextchar\bgroup{\sb}{}}
\newcommand*{\optsp}{\new@ifnextchar\bgroup{\sp}{}}
\newcommand*{\optsbtwo}{\new@ifnextchar\bgroup{\expandafter\optsbtwo@first}{}}
\newcommand*{\optsbtwo@first}[1]{\new@ifnextchar\bgroup{\sbtwo{#1}}{\sb{#1}}}
\newcommand*{\optspargp}[1][]{\sp{#1}\optargp}
\newcommand*{\optsbspargp}[1][]{\sb{#1}\optspargp}
\newcommand*{\set}[1]{\new@ifnextchar\bgroup{\setbuilder{#1}}{\ordlr\{{#1}\}}}
\newcommand*{\seq}[1]{\new@ifnextchar\bgroup{\seqbuilder{#1}}{\tuple{#1}}}
  \def\hash{\#}
\newcommand*{\powerset}[1][]{\mathscr{P}\ifstrempty{#1}{\hairspace}{\sb{#1}} \optargp}
\newcommand{\mbfs}{\mathbf{s}}
\newcommand*{\smashsp}[2]{{#1}\sp{\smash{#2}}}
\newcommand*{\op}[1]{\smashsp{#1}{\mathrm{op}}}
\newcommand*{\ctchoice}[2]{%
  \ifdef{\ct@homstyle}%
  {#2}%
  {#1}}
\newcommand*{\ctchoicetwo}[5]{%
  \ifdef{\ct@homcatstyle}%
  {#2}{%
  \ifdef{\ct@transfstyle}%
  {#3}{%
  \ifdef{\ct@catstyle}%
  {#4}{%
  \ifdef{\ct@homstyle}%
  {#5}{%
  {#1}}}}}}
\newcommand*{\DefineCategory}[2]{\DeclareRobustCommand{#1}{\ctchoice{\mathbf{#2}}{\mathbf{#2}}}}
\newcommand*{\cat}[1]{%
  \begingroup%
  \def\ct@catstyle{cat}%
  #1%
  \endgroup}
\newcommand*{\bicat}[1]{%
  \begingroup
  \def\ct@bicatstyle{bicat}%
  #1%
  \endgroup}
\newcommand*{\Hom}[1][]{%
  \begingroup%
  \def\ct@homstyle{hom}%
  \ifstrempty{#1}{\mathrm{Hom}}{#1}%
  \endgroup%
  \optargptwo}
\newcommand*{\HHom}[1][]{%
  \begingroup%
  \def\ct@homcatstyle{homcat}%
  \ifstrempty{#1}{\mathbf{Hom}}{#1}%
  \endgroup
  \optargptwo}
\newcommand*{\hpty}[1][]{%
  \begingroup%
  \def\ct@transfstyle{transf}%
  \ifstrempty{#1}{\mathrm{Nat}}{#1}%
  \endgroup
  \optargptwotwo}
\DefineCategory{\Set}{Set}
\DefineCategory{\Simplex}{\Delta}
\DefineCategory{\SSet}{sSet}
\newcommand{\Kancx}[1][]{\ctchoice{\mathbf{Kan}\ifstrempty{#1}{}{\argp{#1}}}{\mbfs {#1}}}
\newcommand*{\Sh@one}[1]{\ctchoice{\mathbf{Sh}\argp{#1}}{\mathbf{Sh}_{#1}}}
\newcommand*{\Sh@two}[2]{\ctchoice{\mathbf{Sh}\argp{#1, #2}}{\mathbf{Sh}_{\tuple{#1, #2}}}}
\newcommand*{\Sh}[1]{\new@ifnextchar\bgroup{\expandafter\Sh@two{#1}}{\Sh@one{#1}}}
\newcommand*{\Psh}[1]{\ctchoice{\mathbf{Psh}\argp{#1}}{\mathbf{Psh}_{#1}}}
\newcommand*{\SPsh}{\mbfs \Psh}
\newcommand*{\SSh}{\mbfs \Sh}
\newcommand*{\overcat}[2]{\mathord{#1 \sb{\mathord{\divslash} #2}}}
\newcommand*{\nv}[1][]{\mathrm{N}\sp{#1}\optargp}
\newcommand{\Kompakt}{\mathbf{K}\optsbspargp}
\newcommand{\Ex}[1][]{\mathrm{Ex}\sp{#1}\optargp}
\newcommand*{\xHom}[3][]{\ifstrempty{#1}{\ordlr[{{#2}, {#3}}]}{\ordlr[{{#2}, {#3}}]\sb{#1}}}
\newcommand*{\Func}[2]{\xHom{#1}{#2}}
\newcommand*{\kwlim}[3][]{\ordlr\lbrace{{#2}, {#3}}\rbrace\sp{#1}}
\renewcommand*{\@makefnmark}{\hbox{\textsuperscript{\normalfont [\LiningNumbers\@thefnmark]}}}
\renewcommand*{\@makefntext}[1]{\parindent 1.0em\noindent\ifdefempty{\@thefnmark}{}{\hb@xt@-1.0em{\hss \normalfont [\LiningNumbers \@thefnmark]}\hspace{1.0em}}#1}
\newcommand{\footpar}[1]{\gdef\@thefnmark{}\@footnotetext{#1}}
\newcommand{\hangsecnum}{\def\@seccntformat##1{\llap{\csname the##1\endcsname\quad}}}
    \name{sortname}
    \name{author}
    \name{editor}
    \name{translator}
\declaretheorem[style=plain,parent=section,title=Theorem,refname=theorem,Refname=Theorem]{thm}
\declaretheorem[style=plain,sibling=thm,title=Proposition,refname=proposition,Refname=Proposition]{prop}
\declaretheorem[style=plain,sibling=thm,title=Lemma,refname=lemma,Refname=Lemma]{lem}
\declaretheorem[style=plain,sibling=thm,title=Corollary,refname=corollary,Refname=Corollary]{cor}
\declaretheorem[style=definition,sibling=thm,title=Definition,refname=definition,Refname=Definition]{dfn}
\declaretheorem[style=definition,sibling=thm,title=Example,refname=example,Refname=Example]{example}
\declaretheoremstyle[style=remark,headfont=\scshape]{remark}
\declaretheorem[style=remark,sibling=thm,title=Remark,refname=remark,Refname=Remark]{remark}
\declaretheorem[style=plain,numbered=no,title=Theorem]{thm*}
\declaretheorem[style=plain,numbered=no,title=Proposition]{prop*}
\declaretheorem[style=remark,numbered=no,title=Remark]{remark*}
\declaretheorem[style=definition,numbered=no,title=Example]{example*}
\newcommand*{\makenumpar}[1][]{%
  \par%
  \refstepcounter{numpar}%
  {\normalfont{\bfseries\textparagraph}\hspace{0.5em}\thenumpar\ifstrempty{#1}{}{ ({\normalfont #1})}.} }
\renewcommand*{\bibnamedash}{}
\newcommand*{\authoryearpunct}{\hspace*{-\bibhang}\bibsentence}
\appto\citesetup{\LiningNumbers}
    \newline\setunit{\authoryearpunct}}%
    \newline\setunit{\authoryearpunct}%
    \newline\setunit{\authoryearpunct}%
\appto\bibsetup{\raggedright}
\appto\bibfont{\LiningNumbers\small}
\title{Internal and local homotopy theory}
\author{Zhen~Lin Low}
\date{1 May 2014}
\begin{document}

\maketitle
\footpar{Department of Pure Mathematics and Mathematical Statistics, University of Cambridge, Cambridge, UK. \textsc{E-mail address}: \texttt{Z.L.Low@dpmms.cam.ac.uk}}

\begin{abstract}
There is a well-established homotopy theory of simplicial objects in a Grothendieck topos, and folklore says that the weak equivalences are axiomatisable in the geometric fragment of $L_{\omega_1, \omega}$. We show that it is in fact a theory of presheaf type, \ie classified by a presheaf topos. As a corollary, we obtain a new proof of the fact that the local Kan fibrations of simplicial presheaves that are local weak homotopy equivalences are precisely the morphisms with the expected local lifting property.
\end{abstract}

\section*{Introduction}

It has long been suggested that the proper setting for non-abelian sheaf cohomology  should be some kind of category of ``sheaves'' of homotopy types.  The first concrete construction of such a category was perhaps given by \citet{Brown:1973}: in \opcit, he defines the notion of a `category of fibrant objects' and mentions that sheaves (on a topological space) of Kan complexes constitute such a category. As with the definition of `sheaf of local rings', we must interpret Kan's extension condition \emph{locally}; and in the case of sheaves on a topological space, `locally' means the same thing as `stalkwise'.

The logical next step was taken by \citet{Jardine:1986}, who showed that there is a category of fibrant objects consisting of locally fibrant simplicial sheaves on a Grothendieck site. In some sense, this development was anticipated by \citet{VanOsdol:1977}, but Jardine avoided using the embedding theorem of \citet{Barr:1971} and instead gave direct proofs of the required properties. \citet{Jardine:1987} then observed that the resulting homotopy theory could be transferred to the category of locally fibrant simplicial \emph{presheaves}. This, as \citet{DHI:2004} explained, should be regarded as a step toward the notion of homotopy-coherent ``sheaf'' of spaces.

In this paper, we re-examine the above-mentioned results through the lens of category-theoretic logic. This will hopefully shed some light on the status of the homotopy theory of simplicial sets in constructive mathematics. The structure of the paper is as follows:
\begin{itemize}
\item In \Sect 1, we show that the category of internal Kan complexes in any regular category is a category of fibrant objects (in the sense of Brown, as mentioned above), generalising the result of \citet{Jardine:1986}.

\item In \Sect 2, we construct an internal version of Kan's $\Ex[\infty]$ functor and use it to define `weak equivalence' for general simplicial objects in a $\sigma$-pretopos.

\item In \Sect 3, we prove that weak equivalences of simplicial sheaves are classified by a presheaf topos. At first glance, this may seem to be a simple corollary of a theorem of \citet{Raptis-Rosicky:2014}, but there some subtleties in showing that the morphisms classified by the obvious construction do indeed coincide with the weak equivalences previously defined.

\item In \Sect 4, we use the classifying topos for weak homotopy equivalences to obtain some additional results: in particular, we will see that a morphism is an internal Kan fibration and a internal weak homotopy equivalence if and only if it has the internal lifting property analogous to the well-known one for trivial Kan fibrations of simplicial sets.

\item In \Sect 5, we apply the above results to Jardine's local homotopy theory of simplicial presheaves.
\end{itemize}
We will often use a completeness theorem of one kind or another to transfer results from the classical homotopy theory of simplicial sets to more general settings. Proofs using such techniques will be marked with $\lozenge$. Familiarity with basic topos theory is assumed from \Sect 3 onward; standard references include \citep[\Chap II, VII, and X]{MLM} and \citep[Part C, \Sects 2.1--2.3; Part D, \Sect 3.1]{Johnstone:2002b}.

Astute readers may notice that model structures do not make any appearance here. This is partly because weak factorisation systems do not have the expected properties in constructive settings: indeed, the well-known fact that the left class of a weak factorisation system is closed under all coproducts turns out to be equivalent to the axiom of choice. (Consider the weak factorisation system cofibrantly generated by $0 \to 1$.) That is \emph{not} to say that model structures cannot exist: after all, model categories of simplicial (pre)sheaves have been constructed by \citet{Brown-Gersten:1973}, \citet{Joyal-to-Grothendieck}, and \citet{Jardine:1987}. Rather, what the observation suggests is that the classical Kan--Quillen model structure cannot be \emph{internalised}\hairspace ---\hairspace and at any rate, an \emph{internal} model structure is not what we want.

\subsection*{Acknowledgements}

The methods used in this paper were inspired in no small part by the work of Tibor Beke \parencite*{Beke:1998,Beke:2000}. Many thanks also to Peter Johnstone: first, for taking the present writer as a Ph.D. student, and second, for invaluable assistance in navigating the literature on topos theory. Discussions with Achilleas Kryftis and Mike Shulman helped clarify various points in the exposition.

The author gratefully acknowledges financial support from the Cambridge Commonwealth, European and International Trust and the Department of Pure Mathematics and Mathematical Statistics.

\section{Internal homotopy theory}

We reformulate some of the definitions of \citet{Jardine:1986} in site-free language. These concepts make sense and behave well in the context of regular categories, a large class that includes both toposes and abelian categories. Readers unfamiliar with regular categories may refer to \Sect 1.3 of \citep[Part A]{Johnstone:2002a} or Chapter 2 of \citep{Borceux:1994b}.

\begin{dfn}
A \strong{weak pullback square} in a regular category $\mathcal{S}$ is a commutative diagram in $\mathcal{S}$, say
\[
\begin{tikzcd}
Z \dar \rar &
X \dar \\
W \rar &
Y
\end{tikzcd}
\]
such that the comparison morphism $Z \to W \times_Y X$ is a regular epimorphism.
\end{dfn}

\begin{lem}
\label{lem:generalities:lifting.properties.and.pullback.products}
Let $f : X \to Y$ and $g : Z \to W$ be morphisms in a locally small category $\mathcal{C}$. Consider the commutative diagram in $\cat{\Set}$ shown below:
\[
\begin{tikzcd}
\Hom[\mathcal{C}]{W}{X} \dar[swap]{g^*} \rar{f_*} &
\Hom[\mathcal{C}]{W}{Y} \dar{g^*} \\
\Hom[\mathcal{C}]{Z}{X} \rar[swap]{f_*} &
\Hom[\mathcal{C}]{Z}{Y}
\end{tikzcd}
\]
\begin{enumerate}[(i)]
\item The diagram is a weak pullback square if and only if $g$ has the left lifting property with respect to $f$.

\item The diagram is a pullback square if and only if $g$ is left orthogonal to $f$.
\end{enumerate}
\end{lem}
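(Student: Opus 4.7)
The plan is to unwind the definitions on both sides, since the lemma is essentially a translation between two ways of saying the same thing.

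First, I would identify the pullback of the cospan $\Hom[\mathcal{C}]{W}{Y} \to \Hom[\mathcal{C}]{Z}{Y} \leftarrow \Hom[\mathcal{C}]{Z}{X}$ in $\cat{\Set}$: it is the set of pairs $(h, k)$, where $h : W \to Y$ and $k : Z \to X$, such that $f \circ k = h \circ g$\hairspace---\hairspace in other words, the set of commutative squares of the form
\[
\begin{tikzcd}
Z \dar[swap]{g} \rar{k} & X \dar{f} \\
W \rar[swap]{h} & Y \rlap{\hairspace .}
\end{tikzcd}
\]
Under this identification, the comparison map $\Hom[\mathcal{C}]{W}{X} \to \Hom[\mathcal{C}]{W}{Y} \times_{\Hom[\mathcal{C}]{Z}{Y}} \Hom[\mathcal{C}]{Z}{X}$ sends a morphism $\ell : W \to X$ to the pair $(f \circ \ell, \ell \circ g)$, \ie to the commutative square in which $\ell$ is a diagonal filler.

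Next, I would observe that $\cat{\Set}$ is regular and that regular epimorphisms in $\cat{\Set}$ are precisely the surjections. So the square in the statement is a weak pullback square, in the sense of the definition just given in the paper, if and only if the comparison map is surjective. For part (i), surjectivity of the comparison map unwinds to exactly the assertion that every lifting problem of $g$ against $f$ admits a solution; this is the definition of the left lifting property. For part (ii), the comparison map is a bijection if and only if every lifting problem has a \emph{unique} solution, which is the definition of left orthogonality.

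There is no real obstacle here\hairspace---\hairspace the work is entirely in lining up the identifications in the correct direction, which is why I would take pains to write out the comparison map explicitly. The only point worth flagging is the appeal to the (trivial) fact that regular epis in $\cat{\Set}$ are surjections, since it is this that lets us connect the ``weak pullback'' condition in the ambient regular category $\cat{\Set}$ to the external lifting property.
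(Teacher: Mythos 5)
Your proposal is correct and is exactly the routine unwinding the paper has in mind, where the proof is dismissed as ``just a diagram chase'': identify the pullback with the set of lifting problems, note the comparison map sends $\ell$ to the square it fills, and use that regular epimorphisms in $\cat{\Set}$ are surjections. Nothing is missing.
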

\begin{proof} \obviousproof
This is just a diagram chase.
\end{proof}

\needspace{3.0\baselineskip}
It will be convenient to borrow a concept from enriched category theory:

\begin{dfn}
Let $\mathcal{C}$ be a locally small category and let $W : \mathcal{J} \to \cat{\Set}$ be a functor. A \strong{$W$-weighted limit} of a diagram $X : \mathcal{J} \to \mathcal{C}$ is an object $\kwlim{W}{X}$ equipped with bijections
\[
\Hom[\Func{\mathcal{J}}{\cat{\Set}}]{W}{\Hom[\mathcal{C}]{T}{X}} \cong \Hom[\mathcal{C}]{T}{\kwlim{W}{X}}
\]
that are natural in $T$.
\end{dfn}

\begin{remark}
Let $\mathcal{C}$ be a locally small category, let $W : \mathcal{J} \to \cat{\Set}$ be a functor, and let $X : \mathcal{J} \to \mathcal{C}$ be a diagram. If $W$ is a representable functor, say $W = \Hom[\mathcal{J}]{j}{\blank}$, then the weighted limit $\kwlim{W}{X}$ exists and is naturally isomorphic to the object $X j$. More generally, if $W \cong \prolim_\mathcal{I} \Hom[\mathcal{J}]{F}{\blank}$ for some diagram $F : \mathcal{I} \to \mathcal{J}$ and $\mathcal{C}$ has limits for diagrams of shape $\mathcal{I}$, then the weighted limit $\kwlim{W}{X}$ exists and is naturally isomorphic to the limit $\prolim_\mathcal{I} X F$.
\end{remark}

\begin{example}
Let $\mathcal{C}$ be a locally small category with finite limits. Given a simplicial object $X : \op{\cat{\Simplex}} \to \mathcal{C}$, the weighted limit $\kwlim{\partial \Delta^n}{X}$ exists and can be identified with the $n$-th matching object of $X$. Thus, we may regard weighted limits as generalised matching objects.
\end{example}

Given a category $\mathcal{C}$, let $\mbfs \mathcal{C}$ be the category of simplicial objects in $\mathcal{C}$.

\begin{dfn}
An \strong{internal Kan fibration} (\resp \strong{internal trivial Kan fibration}) in a regular category $\mathcal{S}$ is a morphism $p : X \to Y$ in $\mbfs \mathcal{S}$ with the following property: 
\begin{itemize}
\item If $i : Z \to W$ is a horn inclusion $\Lambda^n_k \embedinto \Delta^n$ (\resp boundary inclusion $\partial \Delta^n \embedinto \Delta^n$), then the square in the diagram below is a weak pullback square in $\mathcal{S}$:
\[
\begin{tikzcd}
\kwlim{W}{X} \dar[swap]{\kwlim{i}{X}} \rar{\kwlim{W}{p}} &
\kwlim{W}{Y} \dar{\kwlim{i}{Y}} \\
\kwlim{Z}{X} \rar[swap]{\kwlim{Z}{p}} &
\kwlim{Z}{Y}
\end{tikzcd}
\]
\end{itemize}
\end{dfn}

\begin{remark}
\label{rem:hotoposes:internal.Kan.fibrations.in.Set}
If $\mathcal{S} = \cat{\Set}$, then an internal Kan fibration is just a Kan fibration in the usual sense, by \autoref{lem:generalities:lifting.properties.and.pullback.products}. If $\mathcal{S} = \Func{\op{\mathcal{C}}}{\cat{\Set}}$ for a small category $\mathcal{C}$, then an internal Kan fibration in $\mathcal{S}$ is the same thing as a componentwise Kan fibration, because limits and colimits in $\Func{\op{\mathcal{C}}}{\cat{\Set}}$ are computed componentwise. And if $\mathcal{S} = \cat{\Sh{\mathcal{C}}{J}}$ for a small Grothendieck site $\tuple{\mathcal{C}, J}$, then an internal Kan fibration in $\mathcal{S}$ is precisely a fibration of simplicial sheaves in the sense of \citet{Jardine:1986}.
\end{remark}

\begin{dfn}
An \strong{internal Kan complex} in a regular category $\mathcal{S}$ is an object $X$ in $\mbfs \mathcal{S}$ such that the unique morphism $X \to 1$ in $\mbfs \mathcal{S}$ is an internal Kan fibration. We write $\cat{\Kancx[\mathcal{S}]}$ for the full subcategory of $\mbfs \mathcal{S}$ spanned by the internal Kan complexes in $\mathcal{S}$.
\end{dfn}

For our purposes, it is most convenient to define weak homotopy equivalences in terms of an internal lifting property. Let $D^{n+1}$ be the simplicial set defined by the following pushout diagram in $\cat{\SSet}$,
\[
\begin{tikzcd}
\partial \Delta^n \times \Delta^1 \dar \rar[hookrightarrow] &
\Delta^n \times \Delta^1 \dar{q} \\
\partial \Delta^n \rar &
D^{n+1}
\end{tikzcd}
\]
where $\partial \Delta^n \times \Delta^1 \to \partial \Delta^n$ is the projection and $
\partial \Delta^n \times \Delta^1 \embedinto \Delta^n \times \Delta^1$ is induced by the boundary inclusion $\partial \Delta^n \embedinto \Delta^n$. Let $j_0, j_1 : \Delta^n \to D^{n+1}$ be the morphisms obtained by composing with $q : \Delta^n \times \Delta^1 \to D^{n+1}$ the two morphisms $\Delta^n \to \Delta^n \times \Delta^1$ induced by the two vertex inclusions $\Delta^0 \to \Delta^1$.

\begin{dfn}
\label{dfn:hotoposes:DI.whe}
A \strong{Dugger--Isaksen weak equivalence} in a regular category $\mathcal{S}$ is a morphism $f : X \to Y$ in $\cat{\Kancx[\mathcal{S}]}$ such that the morphism
\[
\kwlim{\Delta^n}{X} \times_{\kwlim{\Delta^n}{Y}} \kwlim{D^{n+1}}{Y} \longrightarrow \kwlim{\partial \Delta^n}{X} \times_{\kwlim{\partial \Delta^n}{Y}} \kwlim{\Delta^n}{Y}
\]
induced by the commutative diagram
\[
\begin{tikzcd}[column sep=3.0ex, row sep=3.0ex]
\bullet \arrow{dd} \drar[dashed] \arrow{rr} &&
\kwlim{D^{n+1}}{Y} \arrow[near end]{dd}{\kwlim{j_1}{Y}} \drar{\kwlim{j_0}{Y}} \\
&
\bullet \arrow[crossing over]{dd} \arrow[crossing over]{rr} &&
\kwlim{\Delta^n}{Y} \arrow{dd} \\
\kwlim{\Delta^n}{X} \drar \arrow[swap, near end]{rr}{\kwlim{\Delta^n}{f}} &&
\kwlim{\Delta^n}{Y} \drar \\
&
\kwlim{\partial \Delta^n}{X} \arrow[swap]{rr}{\kwlim{\partial \Delta^n}{f}} &&
\kwlim{\partial \Delta^n}{Y}
\end{tikzcd}
\]
is a regular epimorphism in $\mathcal{S}$.
\end{dfn}

\begin{remark}
In the case $\mathcal{S} = \cat{\Set}$, a Dugger--Isaksen weak equivalence is a morphism $f : X \to Y$ in $\cat{\Kancx}$ with the following generalised right lifting property: given a commutative diagram of the form below,
\[
\begin{tikzcd}
\partial \Delta^n \dar[hookrightarrow] \rar{\partial x} &
X \dar{f} \\
\Delta^n \rar[swap]{y} &
Y
\end{tikzcd}
\]
there exist $x : \Delta^n \to X$ and $h : D^{n+1} \to Y$ making the following diagram commute:
\[
\begin{tikzcd}
\partial \Delta^n \dar[hookrightarrow] \rar[hookrightarrow] \arrow[bend left=25]{rr}{\partial x} &
\Delta^n \dar{j_1} \rar[dashed]{x} &
X \dar{f} \\
\Delta^n \rar[swap]{j_0} \arrow[swap, bend right=25]{rr}{y} &
D^{n+1} \rar[swap, dashed]{h} &
Y
\end{tikzcd}
\]
In other words, every $n$-simplex of $Y$ whose boundary admits a lift to $X$ is homotopic (rel.\@ $\partial \Delta^n$) to an $n$-simplex in the image of $f : X \to Y$.
\end{remark}

\begin{thm}
\label{thm:hotoposes:Dugger-Isaksen.weak.equivalences}
Let $f : X \to Y$ be a morphism of Kan complexes in $\cat{\Set}$. The following are equivalent:
\begin{enumerate}[(i)]
\item $f : X \to Y$ is a (weak) homotopy equivalence.

\item $f : X \to Y$ is a Dugger--Isaksen weak equivalence.
\end{enumerate}
\end{thm}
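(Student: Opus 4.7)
My plan is to prove both implications using the Quillen model structure on $\cat{\SSet}$.

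\emph{(i) $\Rightarrow$ (ii).} I would factor $f = p \circ i$ with $i \colon X \embedinto Z$ a trivial cofibration into a Kan complex $Z$ and $p \colon Z \to Y$ a trivial Kan fibration. Lifting $\id_X$ against $i$ gives a retraction $r \colon Z \to X$. The pushout-product of $i$ with the cofibration $\partial \Delta^1 \embedinto \Delta^1$ is again a trivial cofibration, so $\tuple{i r, \id_Z} \colon Z \times \partial \Delta^1 \to Z$ extends, compatibly with the constant homotopy at $i$ on $X \times \Delta^1$, to a homotopy $H \colon Z \times \Delta^1 \to Z$ that is constant on $i(X)$. Given a Dugger--Isaksen lifting problem $\tuple{\partial x, y}$, I first lift $y$ strictly through $p$ to obtain $\tilde y \colon \Delta^n \to Z$ with $p \tilde y = y$ and $\tilde y|_{\partial \Delta^n} = i \partial x$. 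Setting $x = r \tilde y$ extends $\partial x$, and the composite $p \circ H \circ (\tilde y \times \id_{\Delta^1}) \colon \Delta^n \times \Delta^1 \to Y$ is constant on $\partial \Delta^n$ and interpolates between $f x$ and $y$, so it factors through the pushout $D^{n+1}$ to produce the required $h$.

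\emph{(ii) $\Rightarrow$ (i).} I would prove that $\pi_0 f$ and each $\pi_n(f, x_0)$ is bijective. Surjectivity and injectivity of $\pi_0 f$ are the cases $n = 0$ (noting $D^1 \cong \Delta^1$) and $n = 1$ of (ii) respectively. For $n \geq 1$, surjectivity of $\pi_n(f, x_0)$ is immediate by applying (ii) with $\partial x$ the constant map at $x_0$ to a representative of the given homotopy class in $Y$. For injectivity, suppose $x_0, x_1 \colon \Delta^n \to X$ are both constant on $\partial \Delta^n$ at the basepoint and $f x_0 \simeq f x_1$ rel $\partial \Delta^n$ in $Y$. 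Using the Kan complex structure of $Y$, I represent this homotopy as a single $(n+1)$-simplex $\tilde y \colon \Delta^{n+1} \to Y$ with $d_0 \tilde y = f x_0$, $d_1 \tilde y = f x_1$, and $d_i \tilde y$ degenerate at the basepoint of $Y$ for $i \geq 2$. The analogous face pattern defines $\partial \tilde x \colon \partial \Delta^{n+1} \to X$, and an application of (ii) to $\tuple{\partial \tilde x, \tilde y}$ yields $\sigma \colon \Delta^{n+1} \to X$ extending $\partial \tilde x$; by Kan's combinatorial description of simplicial homotopy, this witnesses $x_0 \simeq x_1$ rel $\partial \Delta^n$ in $X$.

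The main obstacle is the injectivity step in the reverse direction, which hinges on converting $\Delta^1$-parametrised rel-boundary homotopies into higher-dimensional simplices via Kan's extension condition. The forward direction is essentially formal modulo the strong deformation retract structure of $i$, which is a standard application of lifting properties.
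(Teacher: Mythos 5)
Your argument is correct in substance. Note, however, that the paper does not prove this statement at all: it simply cites Proposition 4.1 of Dugger--Isaksen (2004), so what you have written is a self-contained replacement for that citation rather than a variant of an argument in the text. Your route is the standard one: for (i) $\Rightarrow$ (ii) you factor $f$ as a trivial cofibration $i$ followed by a (necessarily trivial) fibration $p$, upgrade $i$ to a strong deformation retract using the pushout-product axiom and fibrancy of $Z$, lift $y$ strictly through $p$, and push the deformation down to $Y$; the resulting homotopy is constant in the $\Delta^1$-direction over $\partial \Delta^n$, so it does factor through $D^{n+1}$ as required. For (ii) $\Rightarrow$ (i) your reduction to $\pi_0$ and $\pi_n$ works, with the only genuinely load-bearing ingredient being the standard fact (Kan; see \eg \citep[\Chap I, \Sect 7]{GJ}) that a rel-boundary homotopy between spherical $n$-simplices in a Kan complex is witnessed by a single $(n+1)$-simplex whose remaining faces are constant at the basepoint; you invoke this correctly, though be aware the usual statement places the two nontrivial faces at $d_n$ and $d_{n+1}$ rather than $d_0$ and $d_1$ --- either pattern can be arranged, but you should fix one convention and mirror it in $X$. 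Two further cosmetic points: the roles of $j_0$ and $j_1$ in your first direction come out reversed relative to the paper's diagram (either reverse the deformation retraction $H$ or reverse the homotopy using the Kan condition), and in the $\pi_0$-injectivity step you should say explicitly that membership in the same path component of the Kan complex $Y$ is realised by a single edge. None of these affects the correctness of the proof.
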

\begin{proof} \openproof
See Proposition 4.1 in \citep{Dugger-Isaksen:2004b}.
\end{proof}

It is clear that the property of being an internal Kan fibration, internal trivial Kan fibration, or Dugger--Isaksen weak equivalence is preserved by regular functors and is reflected by conservative regular functors. To transfer results from the classical setting of Kan complexes in $\cat{\Set}$ to the general setting of internal Kan complexes in any regular category, we follow \citet{VanOsdol:1977} in using the classical completeness theorem for regular logic:

\begin{thm}[Barr]
\label{thm:generalities:classical.completeness.for.regular.categories}
For each small regular category $\mathcal{C}$, there exist a set $B$ and a conservative regular functor $\mathcal{C} \to \cat{\Set}^B$.
\end{thm}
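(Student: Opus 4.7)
The plan is to reduce the statement to the completeness theorem for regular logic, as foreshadowed by the author's remark about \citet{VanOsdol:1977}. First I would associate to the small regular category $\mathcal{C}$ its canonical internal regular theory $T_\mathcal{C}$: the sorts are the objects of $\mathcal{C}$, the function symbols are the morphisms, and the axioms express that the designated finite limits and regular epi--mono factorisations really have the stated universal properties. By construction, models of $T_\mathcal{C}$ in any regular category $\mathcal{D}$ correspond (up to natural isomorphism) to regular functors $\mathcal{C} \to \mathcal{D}$; in particular, $\cat{\Set}$-valued models are precisely the regular functors $\mathcal{C} \to \cat{\Set}$.

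The heart of the argument is the completeness theorem: every regular sequent satisfied by every $\cat{\Set}$-valued model of $T_\mathcal{C}$ already holds in the syntactic regular category of $T_\mathcal{C}$, which is $\mathcal{C}$ itself. The standard route, due to Joyal, is to form the classifying topos of $T_\mathcal{C}$ --- concretely, sheaves on $\mathcal{C}$ for the regular topology generated by single-morphism covers by regular epimorphisms --- and to show that it has enough points, each point being (via restriction along Yoneda) a regular functor $\mathcal{C} \to \cat{\Set}$. Because $\mathcal{C}$ is small, the collection $B$ of isomorphism classes of such points forms a set.

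Writing $\{F_b\}_{b \in B}$ for a set of representatives, I would assemble them into $F \coloneqq \langle F_b \rangle_{b \in B} \colon \mathcal{C} \to \cat{\Set}^B$. Regular functors are closed under products of codomains, so $F$ is regular. To check that $F$ is conservative, observe that both ``$f$ is a monomorphism'' and ``$f$ is a regular epimorphism'' are expressible as regular sequents in the signature of $T_\mathcal{C}$. If every $F_b(f)$ is an isomorphism, then each of these sequents holds in every $\cat{\Set}$-valued model of $T_\mathcal{C}$, so by completeness $f$ is simultaneously a monomorphism and a regular epimorphism in $\mathcal{C}$; and any such morphism in a regular category is an isomorphism.

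The main obstacle is establishing that the classifying topos has enough points. For coherent theories this is Deligne's theorem, obtained by a transfinite construction whose core ingredient is the prime filter theorem (so genuine use of choice is unavoidable). For a general regular theory the argument is slightly subtler, because the regular topology need not be subcanonical and the classifying topos need not be coherent in the usual finitary sense; one convenient workaround is to replace $\mathcal{C}$ by its pretopos completion, where Deligne's theorem applies directly, and then to restrict the resulting regular functors back along the embedding $\mathcal{C} \embedinto \operatorname{Pretop}(\mathcal{C})$.
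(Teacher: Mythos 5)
The paper offers no argument for this theorem: the proof environment is an open citation to \citet{Barr:1971} and to Corollary 1.5.4 in \citep[Part D]{Johnstone:2002b}, so your sketch must stand on its own. Its skeleton is the standard topos-theoretic proof (Joyal's, and essentially the argument underlying the second cited source): $\cat{\Set}$-valued models of the internal regular theory of $\mathcal{C}$ are exactly the regular functors $\mathcal{C} \to \cat{\Set}$; these are the points of the topos of sheaves on $\mathcal{C}$ for the regular topology, whose covering sieves are generated by single regular epimorphisms; Deligne's theorem supplies enough points; and conservativity of the assembled functor follows because being a monomorphism and being a regular epimorphism are both expressible by regular sequents (or, more directly, because the sheafified Yoneda embedding is fully faithful, hence conservative, and the inverse image functors are jointly conservative). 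That strategy is sound and is consonant with the literature the paper points to.

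Two steps need repair, however. First, your justification that $B$ is a set is wrong: smallness of $\mathcal{C}$ does not bound the isomorphism classes of points of the classifying topos --- already for the object classifier these are the isomorphism classes of sets, a proper class. What you need is the standard supplement to Deligne's theorem: a Grothendieck topos with enough points admits a \emph{set} of jointly conservative points, obtained for instance by choosing, for each of the set-many pairs of distinct subobjects of objects in a small generating set, one point separating them; let $B$ index such a family. Second, the caution in your closing paragraph is misplaced: on a regular category the regular topology \emph{is} subcanonical (regular epimorphisms are pullback-stable and are coequalisers of their kernel pairs), and the classifying topos \emph{is} coherent in the usual finitary sense, since the site has finite limits and its topology is generated by singleton (in particular finite) covering families. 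Hence Deligne's theorem applies directly to $\mathcal{C}$ itself, and the detour through the pretopos completion, while harmless, is unnecessary.
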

\begin{proof} \openproof
See Theorem 1.6 in \citep[\Chap III]{Barr:1971} or Corollary 1.5.4 in \citep[Part D]{Johnstone:2002b}.
\end{proof}

\begin{dfn}
Let $Z$ be a simplicial set and let $X$ be a simplicial object in a locally small category $\mathcal{C}$. A \strong{cotensor product} $Z \cotens X$ is a simplicial object in $\mathcal{C}$ equipped with bijections
\[
\Hom[\SSet]{\Delta^n \times Z}{\Hom[\mathcal{C}]{T}{X}} \cong \Hom[\mathcal{C}]{T}{\parens{Z \cotens X}_n}
\]
that are natural in $n$ and $T$.
\end{dfn}

\begin{remark}
If $\mathcal{C}$ has finite limits and $Z$ is a finite simplicial set, then $Z \cotens X$ exists for all simplicial objects $X$. Furthermore, there is a simplicial enrichment of $\mbfs \mathcal{C}$ such that the cotensor products defined above have the well-known enriched universal property.
\end{remark}

\begin{lem}
\label{lem:hotoposes:cotensors.and.internal.Kan.complexes}
Let $X$ be an internal Kan complex in a regular category $\mathcal{S}$.
\begin{enumerate}[(i)]
\item For all finite simplicial sets $Z$, the cotensor product $Z \cotens X$ is also an internal Kan complex.

\item If $g : Z \to W$ is a monomorphism of finite simplicial sets, then the induced morphism $g \cotens \id_X : W \cotens X \to Z \cotens X$ is an internal Kan fibration.

\item If $g : Z \to W$ is an anodyne extension of finite simplicial sets, then the induced morphism $g \cotens \id_X : W \cotens X \to Z \cotens X$ is an internal trivial Kan fibration.
\end{enumerate}
\end{lem}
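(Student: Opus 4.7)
The plan hinges on the natural isomorphism
\[
\kwlim{A}{Z \cotens X} \cong \kwlim{A \times Z}{X}
\]
valid for all finite simplicial sets $A, Z$ and any simplicial object $X$ in a finitely complete category; this is a short Yoneda chase, since both sides corepresent the functor $T \mapsto \Hom[\SSet]{A \times Z}{\Hom[\mathcal{S}]{T}{X}}$ via the cartesian closed structure of $\SSet$ and the defining universal properties of the cotensor product and the weighted limit.

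Granting this identity, each of (i)--(iii) can be recast as a single regular-epimorphism claim. For part~(ii), the weak-pullback condition characterising ``$g \cotens \id_X$ is an internal Kan fibration'' against a horn inclusion $i : \Lambda^n_k \embedinto \Delta^n$ unfolds, using that pushouts in the weight correspond to pullbacks of weighted limits, into the assertion that
\[
\kwlim{\Delta^n \times W}{X} \longrightarrow \kwlim{K}{X}
\]
is a regular epimorphism, where $K = (\Lambda^n_k \times W) \sqcup_{\Lambda^n_k \times Z} (\Delta^n \times Z)$ and the inclusion $K \embedinto \Delta^n \times W$ is the pushout-product of $i$ and $g$ in $\SSet$. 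Part~(i) is the special case $g : \emptyset \embedinto Z$ (noting that $\emptyset \cotens X \cong 1$), and part~(iii) is the analogous reformulation with boundary inclusions replacing horn inclusions. In all three situations, the inclusion $K \embedinto L$ so produced is a pushout-product of a monomorphism with an anodyne extension of finite simplicial sets, hence itself an anodyne extension by the classical Gabriel--Zisman closure properties.

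The lemma therefore reduces to the following claim: if $X$ is an internal Kan complex in $\mathcal{S}$ and $j : K \embedinto L$ is an anodyne extension of finite simplicial sets, then $\kwlim{L}{X} \to \kwlim{K}{X}$ is a regular epimorphism. To establish this, I would apply Barr's embedding \autoref{thm:generalities:classical.completeness.for.regular.categories} to a small regular subcategory of $\mathcal{S}$ containing the relevant data, yielding a conservative regular functor into $\cat{\Set}^B$. Since regular functors preserve finite limits, regular epimorphisms, and weak pullback squares---and hence send internal Kan complexes to componentwise Kan complexes and commute with finite weighted limits---while conservative ones reflect regular epimorphisms, the claim reduces componentwise to the case $\mathcal{S} = \cat{\Set}$. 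There it becomes the classical statement that $j^* : \Hom[\SSet]{L}{X} \to \Hom[\SSet]{K}{X}$ is a trivial Kan fibration (in particular surjective) for $X$ a Kan complex and $j$ anodyne.

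The main obstacle is really the initial combinatorial bookkeeping: verifying the cotensor--weight identity, recognising the comparison morphism in each of (i)--(iii) as a map of the form $\kwlim{L}{X} \to \kwlim{K}{X}$, and identifying $K \embedinto L$ as a pushout-product so that Gabriel--Zisman applies. Once that is set up, the completeness-theoretic reduction to $\cat{\Set}$ is routine, and the final proof would naturally carry the $\lozenge$ marker flagged in the introduction.
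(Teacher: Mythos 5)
Your proposal is correct and follows essentially the same route as the paper, whose proof is exactly the reduction to $\mathcal{S} = \cat{\Set}$ via Barr's completeness theorem (\autoref{thm:generalities:classical.completeness.for.regular.categories}), where the statements are the classical facts about function complexes of Kan complexes. The only difference is one of bookkeeping order: you carry out the cotensor--weight and pushout-product reformulation internally before transferring a single regular-epimorphism claim, whereas the paper transfers each statement directly (using that internal Kan fibrations and finite weighted limits are preserved by regular functors and reflected by conservative ones) and invokes the corresponding classical results, which encapsulate the same Gabriel--Zisman argument.
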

\begin{proof} \exerproof
Use \autoref{thm:generalities:classical.completeness.for.regular.categories} to reduce to the case where $\mathcal{S} = \cat{\Set}$.
\end{proof}

\begin{cor}[Internal path spaces]
\label{cor:hotoposes:internal.path.spaces}
Let $\mathcal{S}$ be a regular category and let $X$ be an internal Kan complex in $\mathcal{S}$.
\begin{enumerate}[(i)]
\item $\Delta^1 \cotens X$ is an internal Kan complex in $\mathcal{S}$.

\item The morphism $\Delta^1 \cotens X \to \partial \Delta^1 \cotens X$ induced by the boundary inclusion $\partial \Delta^1 \embedinto \Delta^1$ is an internal Kan fibration in $\mathcal{S}$.

\item The morphisms $\Delta^1 \cotens X \to \Delta^0 \cotens X$ induced by the two vertex inclusions $\Delta^0 \to \Delta^1$ are internal trivial Kan fibrations in $\mathcal{S}$. \qed
\end{enumerate}
\end{cor}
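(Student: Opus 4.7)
The plan is to obtain all three statements by direct instantiation of \autoref{lem:hotoposes:cotensors.and.internal.Kan.complexes}, since $\Delta^1$ is a finite simplicial set and the relevant inclusions $\partial \Delta^1 \embedinto \Delta^1$ and $\Delta^0 \to \Delta^1$ are all elementary. No further reduction to $\cat{\Set}$ is required, because the cited lemma has already absorbed the use of Barr's completeness theorem.

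For part (i), I would simply apply \autoref{lem:hotoposes:cotensors.and.internal.Kan.complexes}(i) with $Z = \Delta^1$, which has only finitely many non-degenerate simplices; this immediately gives that $\Delta^1 \cotens X$ is an internal Kan complex. For part (ii), I would apply \autoref{lem:hotoposes:cotensors.and.internal.Kan.complexes}(ii) to $g = \partial \Delta^1 \embedinto \Delta^1$, which is manifestly a monomorphism of finite simplicial sets; the induced morphism $\Delta^1 \cotens X \to \partial \Delta^1 \cotens X$ is then an internal Kan fibration.

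For part (iii), the only substantive observation needed is that each of the two vertex inclusions $\Delta^0 \to \Delta^1$ can be identified with a horn inclusion: concretely, $\Lambda^1_i \embedinto \Delta^1$ is obtained from $\Delta^1$ by removing the codimension-one face opposite the vertex $i$, which leaves the opposite vertex $1 - i$, so $\Lambda^1_0 \embedinto \Delta^1$ and $\Lambda^1_1 \embedinto \Delta^1$ coincide with the inclusions of the vertices $\{1\}$ and $\{0\}$ respectively. Since horn inclusions are anodyne extensions by definition, \autoref{lem:hotoposes:cotensors.and.internal.Kan.complexes}(iii) applies and yields that each $\Delta^1 \cotens X \to \Delta^0 \cotens X$ is an internal trivial Kan fibration.

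The result is a bookkeeping corollary rather than a theorem with a genuine obstacle; the only content beyond citation is the identification of vertex inclusions with horn inclusions in dimension $1$.
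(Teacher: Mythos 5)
Your proposal is correct and matches the paper exactly: the corollary is stated with an immediate \qed precisely because it is the instantiation of \autoref{lem:hotoposes:cotensors.and.internal.Kan.complexes} at $Z = \Delta^1$, at $g = \partial\Delta^1 \embedinto \Delta^1$, and at the two vertex inclusions viewed as the horn inclusions $\Lambda^1_k \embedinto \Delta^1$ (hence anodyne). One harmless slip: with the usual convention $\Lambda^1_k$ is the vertex $\set{k}$, not $\set{1-k}$, but since both vertex inclusions are horns either way, the conclusion of (iii) is unaffected.
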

%
%
%

\begin{thm}
\label{thm:hotoposes:category.of.internally.fibrant.objects}
Let $\mathcal{S}$ be a regular category. 
\begin{enumerate}[(i)]
\item The class of Dugger--Isaksen weak equivalences has the 2-out-of-3 property and is closed under retracts in $\cat{\Kancx[\mathcal{S}]}$.

\item An internal Kan fibration of internal Kan complexes in $\mathcal{S}$ is an internal trivial Kan fibration if and only if it is also a Dugger--Isaksen weak equivalence.

\item $\cat{\Kancx[\mathcal{S}]}$ is a category of fibrant objects\footnote{--- in the sense of \citet{Brown:1973}.} where the fibrations are the internal Kan fibrations and the weak equivalences are the Dugger--Isaksen weak equivalences.
\end{enumerate}
\end{thm}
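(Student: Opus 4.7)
The overall strategy is to leverage Barr's classical completeness theorem (\autoref{thm:generalities:classical.completeness.for.regular.categories}) to reduce each claim to the corresponding classical fact about Kan complexes in $\cat{\Set}$. The key observation is that being a weak pullback square, being an internal (trivial) Kan fibration, being an internal Kan complex, and being a Dugger--Isaksen weak equivalence are all properties built out of finite limits and regular epimorphisms; these are preserved by regular functors and reflected by conservative regular functors (since in $\cat{\Set}^B$ they hold componentwise). Whenever needed, we embed the finitely many objects and morphisms appearing in a statement into a small full regular subcategory of $\mathcal{S}$ closed under the relevant finite limits, and apply Barr's theorem to that subcategory.

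For (i), the reduction identifies Dugger--Isaksen weak equivalences between internal Kan complexes in $\cat{\Set}^B$ with componentwise classical weak homotopy equivalences of Kan complexes via \autoref{thm:hotoposes:Dugger-Isaksen.weak.equivalences}; the 2-out-of-3 property and closure under retracts then follow from the classical statements. For (ii), the same reduction turns the claim into the classical fact that a Kan fibration between Kan complexes of simplicial sets is a weak homotopy equivalence if and only if it is a trivial Kan fibration.

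For (iii) we check Brown's axioms for $\cat{\Kancx[\mathcal{S}]}$. Every object is by definition fibrant and the terminal object is evidently an internal Kan complex. Closure of internal (trivial) Kan fibrations under composition, and their stability under pullback in $\mbfs \mathcal{S}$ between Kan complexes, reduce via Barr to the corresponding classical stability statements; the same reduction shows that the pullback of such a fibration along an arbitrary morphism in $\cat{\Kancx[\mathcal{S}]}$ is again an internal Kan complex. For path objects, set $PX = \Delta^1 \cotens X$: by \autoref{cor:hotoposes:internal.path.spaces}, the map $\Delta^1 \cotens X \to \partial \Delta^1 \cotens X \cong X \times X$ induced by the boundary inclusion is an internal Kan fibration, while the section $X = \Delta^0 \cotens X \to \Delta^1 \cotens X$ induced by $\Delta^1 \to \Delta^0$ splits the internal trivial Kan fibration $\Delta^1 \cotens X \to X$ coming from a vertex inclusion, and hence is a Dugger--Isaksen weak equivalence by combining parts (i) and (ii). Composing, we obtain a factorisation of the diagonal $X \to X \times X$ as a weak equivalence followed by a fibration, which is the desired path object.

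The main obstacle is bookkeeping: for each transfer argument, picking a small regular subcategory of $\mathcal{S}$ large enough to contain all the objects, morphisms, and finite limits that appear in the statement, while small enough for Barr's theorem to apply, and then checking that the property under consideration really is formulated purely in the language of finite limits and regular epimorphisms so that both preservation and reflection go through. Once that setup is in place, each remaining step is either an immediate appeal to classical simplicial homotopy theory or to basic stability properties of regular epimorphisms in regular categories.
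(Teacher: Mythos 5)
Your proposal is correct and follows essentially the same route as the paper, which proves this theorem precisely by invoking Barr's completeness theorem (\autoref{thm:generalities:classical.completeness.for.regular.categories}) to reduce everything to the classical case $\mathcal{S} = \cat{\Set}$. Your additional details---the bookkeeping with small regular subcategories and the path object $\Delta^1 \cotens X$ via \autoref{cor:hotoposes:internal.path.spaces}---are exactly the intended filling-in of the exercise left to the reader.
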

\begin{proof} \exerproof
Use \autoref{thm:generalities:classical.completeness.for.regular.categories} to reduce to the case where $\mathcal{S} = \cat{\Set}$.
\end{proof}

We have so far avoided discussing homotopy groups for internal Kan complexes. This is because they need not exist in a general regular category: clearly, if we are able to take quotients of internal equivalence relations, then we can construct $\pi_0$; and conversely, because internal equivalence relations define internal Kan complexes, being able to construct $\pi_0$ implies we can take quotients of internal equivalence relations. This suggests that the right setting for these constructions is an effective regular category.\footnote{--- also known as `exact category in the sense of Barr'.}

\begin{prop}
\needspace{3.0\baselineskip}
Let $\mathcal{S}$ be an effective regular category and let $\pblank_0 : \cat{\Kancx[\mathcal{S}]} \to \mathcal{S}$ be the functor $X \mapsto X_0$.
\begin{enumerate}[(i)]
\item The functor $\pblank_0 : \cat{\Kancx[\mathcal{S}]} \to \mathcal{S}$ has a left adjoint, ${\discr} : \mathcal{S} \to \cat{\Kancx[\mathcal{S}]}$.

\item The functor ${\discr} : \mathcal{S} \to \cat{\Kancx[\mathcal{S}]}$ has a left adjoint, $\pi_0 : \cat{\Kancx[\mathcal{S}]} \to \mathcal{S}$.

\item Regular functors (between effective regular categories) commute with $\pi_0$.
\end{enumerate}
\end{prop}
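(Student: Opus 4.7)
The plan is to treat the three parts in turn. For (i), take $\discr(A)$ to be the constant simplicial object at $A$, with every face and degeneracy map equal to $\id_A$. For any non-empty connected finite simplicial set $Z$ the weighted limit $\kwlim{Z}{\discr(A)}$ is naturally isomorphic to $A$, and the comparison between two such weighted limits induced by a morphism between non-empty connected finite simplicial sets is the identity; applied to the horn inclusions $\Lambda^n_k \embedinto \Delta^n$, this shows that $\discr(A) \to 1$ is an internal Kan fibration, so $\discr(A)$ lies in $\cat{\Kancx[\mathcal{S}]}$. The adjunction $\discr \dashv \pblank_0$ is then immediate: a morphism $\discr(A) \to X$ in $\mbfs\mathcal{S}$ is uniquely determined by its $0$-simplex component $A \to X_0$, since naturality forces each higher component to be the composite of $A \to X_0$ with an iterated degeneracy of $X$.

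For (ii), given $X \in \cat{\Kancx[\mathcal{S}]}$, let $R \embedinto X_0 \times X_0$ be the image of $(d_1, d_0) : X_1 \to X_0 \times X_0$ in $\mathcal{S}$. I would first check that $R$ is an internal equivalence relation on $X_0$: reflexivity is witnessed by $s_0 : X_0 \to X_1$ (since $(d_1, d_0) \circ s_0$ is the diagonal), while symmetry and transitivity come from the standard horn-filling arguments of classical simplicial homotopy theory, now translated internally using that the matching-object comparisons $X_2 \to \kwlim{\Lambda^2_k}{X}$ are regular epimorphisms. Effectiveness of $\mathcal{S}$ means that $R$ arises as the kernel pair of an essentially unique regular epimorphism $q : X_0 \to X_0 \divslash R$, and I would set $\pi_0 X \mathrel{:=} X_0 \divslash R$. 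The universal property is then checked via (i): a morphism $f : X \to \discr(B)$ in $\mbfs\mathcal{S}$ corresponds to $f_0 : X_0 \to B$ with $f_0 d_0 = f_0 d_1$, and by the image factorisation this is the same as asking that $f_0$ coequalise the two legs of $R \rightrightarrows X_0$, equivalently that $f_0$ factor uniquely through $q$.

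For (iii), any regular functor between effective regular categories preserves finite limits and regular epimorphisms, hence images and kernel pairs, and hence the characterisation of effective quotients as the essentially unique regular epimorphism with a specified kernel pair; thus both the construction of $R$ and the construction of $X_0 \divslash R$ commute with such functors. The main obstacle, as is usual for $\pi_0$-style arguments, is the verification in (ii) that $R$ is an internal equivalence relation. I expect the cleanest route to be an appeal to \autoref{thm:generalities:classical.completeness.for.regular.categories}: reflexivity, symmetry and transitivity of $R$ are regular sequents that are satisfied by every internal Kan complex, so it suffices to verify them for Kan complexes in $\cat{\Set}$, where they are a textbook fact.
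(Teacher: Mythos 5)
Your proof is correct and follows essentially the same route as the paper: $\discr$ is the constant simplicial object (Kan because the relevant matching comparisons are isomorphisms), $\pi_0 X$ is the quotient of the image of $\prodtuple{d_1, d_0} : X_1 \to X_0 \times X_0$, which exists by effectivity, and preservation under regular functors follows because everything is built from finite limits, images and quotients of kernel pairs. You merely spell out details the paper leaves implicit, in particular verifying via \autoref{thm:generalities:classical.completeness.for.regular.categories} that the image relation is an internal equivalence relation, which is exactly the intended argument.
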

\begin{proof}
(i). Let $\discr X$ be the constant simplicial object on $X$. Since the face and degeneracy operators of $\discr X$ are isomorphisms, the morphisms $\kwlim{\Delta^n}{\discr X} \to \kwlim{\Lambda^n_k}{\discr X}$ induced by the horn inclusions $\Lambda^n_k \embedinto \Delta^n$ must also be isomorphisms (and regular epimorphisms \emph{a fortiori}).

\bigskip\noindent
(ii). We define the functor $\pi_0 : \cat{\Kancx[\mathcal{S}]} \to \mathcal{S}$ by the following coequaliser diagram:
\[
\begin{tikzcd}
X_1 
\rar[transform canvas={yshift=0.75ex}]{d_1} 
\rar[transform canvas={yshift=-0.75ex}, swap]{d_0} &
X_0 
\rar &
\pi_0 X
\end{tikzcd}
\]
Such coequalisers exist, because the image of $\prodtuple{d_1, d_0} : X_1 \to X_0 \times X_0$ is an internal equivalence relation in $\mathcal{S}$. It is straightforward to verify that $\pi_0 : \cat{\Kancx[\mathcal{S}]} \to \mathcal{S}$ is indeed a left adjoint for ${\discr} : \mathcal{S} \to \cat{\Kancx[\mathcal{S}]}$.

\bigskip\noindent
(iii). Since $\pi_0$ is the quotient of an internal equivalence relation constructed by image factorisation, it is preserved by all regular functors.
\end{proof}

\begin{dfn}
Let $n$ be a positive integer and let $X$ be an internal Kan complex in an effective regular category $\mathcal{S}$. 
\begin{itemize}
\item The \strong{internal based $n$-loop fibration} on $X$ is the internal Kan fibration $\Omega^n \argp{X} \to X$ defined by the following pullback diagram in $\cat{\Kancx[\mathcal{S}]}$,
\[
\begin{tikzcd}
\Omega^n \argp{X} \dar \rar &
\Delta^n \cotens X \dar \\
X \rar &
\partial \Delta^n \cotens X
\end{tikzcd}
\]
where $\Delta^n \cotens X \to \partial \Delta^n \cotens X$ is the internal Kan fibration induced by the boundary inclusion $\partial \Delta^n \embedinto \Delta^n$ and $X \to \partial \Delta^n \cotens X$ is the morphism induced by $\partial \Delta^n \to \Delta^0$. 

\item Let $x$ be a morphism $\discr T \to X$ in $\cat{\Kancx[\mathcal{S}]}$. The \strong{internal based $n$-loop space} of $\tuple{X, x}$ is the internal Kan complex $\Omega^n \argp{X, x}$ in $\overcat{\mathcal{S}}{T}$ defined by the following pullback diagram in $\cat{\Kancx[\mathcal{S}]}$:
\[
\begin{tikzcd}
\Omega^n \argp{X, x} \dar \rar &
\Omega^n \argp{X} \dar \\
\discr T \rar[swap]{x} &
X
\end{tikzcd}
\]
The \strong{internal $n$-th homotopy group} of $\tuple{X, x}$ is the object $\pi_n \argp{X, x} = \pi_0 \Omega^n \argp{X, x}$ in $\overcat{\mathcal{S}}{T}$.
\end{itemize}
\end{dfn}

\begin{remark}
It is clear that the above constructions are functorial. Moreover, $\pi_n \argp{X, x}$ admits a natural internal group structure (in $\overcat{\mathcal{S}}{T}$); but we do not need this fact.
\end{remark}

\begin{thm}
\label{thm:hotoposes:internal.whe.and.internal.homotopy.groups}
Let $\mathcal{S}$ be an effective regular category and let $f : X \to Y$ be a morphism in $\cat{\Kancx[\mathcal{S}]}$. The following are equivalent:
\begin{enumerate}[(i)]
\item $f : X \to Y$ is a Dugger--Isaksen weak equivalence.

\item $\pi_0 f : \pi_0 X \to \pi_0 Y$ is an isomorphism in $\mathcal{S}$ and, for all positive integers $n$, all objects $T$ in $\mathcal{S}$, and all morphisms $x : \discr T \to X$ in $\cat{\Kancx[\mathcal{S}]}$, $\pi_n f : \pi_n \argp{X, x} \to \pi_n \argp{Y, f \circ x}$ is an isomorphism in $\overcat{\mathcal{S}}{T}$.

\item $\pi_0 f : \pi_0 X \to \pi_0 Y$ is an isomorphism in $\mathcal{S}$ and, for all positive integers $n$, $\pi_n f : \pi_n \argp{X, \bar{x}} \to \pi_n \argp{Y, f \circ \bar{x}}$ is an isomorphism in $\overcat{\mathcal{S}}{X_0}$, where $\bar{x} : \discr X_0 \to X$ is the component of the adjunction counit.
\end{enumerate}
\end{thm}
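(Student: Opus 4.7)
The plan is to separate the easy relationship between (ii) and (iii) from the substantive content, which is the equivalence (i) $\Leftrightarrow$ (iii), proved by reduction to the classical case via Barr's completeness theorem.

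First, (ii) $\Rightarrow$ (iii) is immediate upon specialising $T = X_0$ and $x = \bar{x}$. For the converse, (iii) $\Rightarrow$ (ii), I would observe that for any basepoint $x : \discr T \to X$ with underlying vertex $x_0 : T \to X_0$, the defining pullbacks realise $\Omega^n \argp{X, x}$ as the pullback of $\Omega^n \argp{X, \bar{x}}$ along the base change functor $x_0^* : \overcat{\mathcal{S}}{X_0} \to \overcat{\mathcal{S}}{T}$. Since $x_0^*$ is a regular functor between effective regular categories, part (iii) of the preceding proposition gives $x_0^* \pi_n \argp{X, \bar{x}} \cong \pi_n \argp{X, x}$, so an iso at $\bar{x}$ pulls back to an iso at $x$, and the analogous statement for $\pi_0$ is immediate.

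For the main equivalence (i) $\Leftrightarrow$ (iii), I would apply \autoref{thm:generalities:classical.completeness.for.regular.categories} to obtain a conservative regular functor $F : \mathcal{S} \to \cat{\Set}^B$. Since Dugger--Isaksen weak equivalences and the $\pi_n$ functors are both defined through regular-categorical operations---finite limits, images, and quotients of internal equivalence relations---$F$ preserves and reflects the two conditions being compared. Concretely, $f$ is a Dugger--Isaksen weak equivalence in $\mathcal{S}$ iff each component $\parens{Ff}^b$ is one in $\cat{\Set}$; and $\pi_n f$ is an iso in $\overcat{\mathcal{S}}{X_0}$ iff for every $b \in B$ and every vertex $v \in \parens{FX}^b_0$ the induced map on classical $\pi_n$ is a bijection.

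Once this componentwise reduction is in place, the result in $\cat{\Set}$ follows from the classical Whitehead theorem for Kan complexes, combined with \autoref{thm:hotoposes:Dugger-Isaksen.weak.equivalences} to translate between Dugger--Isaksen equivalences and weak homotopy equivalences. The main obstacle I anticipate is verifying that $\pi_n$ really does commute with both the base change $x_0^*$ and the functor $F$. Both instances reduce to the single fact that the quotient of an internal equivalence relation commutes with regular functors between effective regular categories, which is exactly part (iii) of the preceding proposition (applied in the second instance to $F$ directly, and in the first to $x_0^*$, noting that base change between slices of an effective regular category is itself a regular functor between effective regular categories).
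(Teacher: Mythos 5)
Your proposal is correct and follows essentially the route the paper intends, since its proof is left as an exercise with precisely the hint ``use \autoref{thm:hotoposes:Dugger-Isaksen.weak.equivalences} and \autoref{thm:generalities:classical.completeness.for.regular.categories}'': reduce via a conservative regular functor to $\cat{\Set}$ (componentwise in $\cat{\Set}^B$), using that regular functors between effective regular categories commute with the $\Omega^n$ and $\pi_0$ constructions, and then invoke the classical characterisation of weak homotopy equivalences of Kan complexes. Your base-change argument for (iii) $\Rightarrow$ (ii), via pullback along $x_0 : T \to X_0$, is exactly the right way to handle the quantification over all $T$ in (ii), which does not transfer directly along the Barr embedding; the only glossed point (glossed by the paper as well) is that one must first cut $\mathcal{S}$ down to a small regular subcategory containing the relevant data before applying Barr's theorem.
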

\begin{proof} \exerproof
Use theorems \ref{thm:hotoposes:Dugger-Isaksen.weak.equivalences} and \ref{thm:generalities:classical.completeness.for.regular.categories}.
\end{proof}

\begin{cor}
Let $\tuple{\mathcal{C}, J}$ be a small Grothendieck site and let $\mathcal{S} = \cat{\Sh{\mathcal{C}}{J}}$. Then the weak equivalences in $\Kancx[\mathcal{S}]$ are precisely the weak equivalences in the sense of \citet{Jardine:1986}, \ie the morphisms that induce isomorphisms of $\pi_0$ and all sheaves of homotopy groups. \qed
\end{cor}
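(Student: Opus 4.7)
The plan is to read the corollary off \autoref{thm:hotoposes:internal.whe.and.internal.homotopy.groups} applied to $\mathcal{S} = \cat{\Sh{\mathcal{C}}{J}}$. Its hypotheses are satisfied, since any Grothendieck topos is an effective regular category; and by \autoref{rem:hotoposes:internal.Kan.fibrations.in.Set}, the category $\cat{\Kancx[\mathcal{S}]}$ coincides on the nose with Jardine's category of locally fibrant simplicial sheaves. Thus the only content left is the identification of the internal $\pi_0$ and $\pi_n(X, x)$ constructed above with Jardine's sheafified homotopy groups.

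For $\pi_0$, the internal construction is the coequaliser of $d_0, d_1 : X_1 \to X_0$ in the topos, which is the sheafification of the presheaf coequaliser; sectionwise this computes the classical $\pi_0$ of $X(U)$ whenever $X(U)$ is Kan, and in general it sheafifies to Jardine's $\tilde{\pi}_0 X$. For $\pi_n$ with $n \geq 1$, the internal based $n$-loop fibration $\Omega^n(X) \to X$ and the fibre $\Omega^n(X, x)$ over a basepoint $x : \discr T \to X$ are built by pullbacks and cotensors with finite simplicial sets, both of which are preserved by sheafification, so they agree with the sheafification of the classical based loop spaces. Applying the internal $\pi_0$ just identified then recovers Jardine's $\tilde{\pi}_n(X, x)$. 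Specialising condition~(iii) of \autoref{thm:hotoposes:internal.whe.and.internal.homotopy.groups} to the generic basepoint $\bar{x} : \discr X_0 \to X$ in $\overcat{\mathcal{S}}{X_0}$ then matches the internal criterion with Jardine's condition that the induced map on sheaves of homotopy groups be an isomorphism for every local section of $X_0$.

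The main delicate point is the $\pi_n$ identification on sites without enough points, where one cannot simply reduce to stalks. The resolution is to observe that both the internal and the sheafified classical loop-space constructions are assembled by the same finite-limit diagrams of cotensors inside $\cat{\Sh{\mathcal{C}}{J}}$, and so they agree by the universal property; this in turn rests on the fact that $\blank \cotens X$ commutes with sheafification, which is an instance of the exactness of the associated-sheaf functor.
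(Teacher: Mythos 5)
Your proposal is correct and follows essentially the same route as the paper, which treats the corollary as an immediate consequence of \autoref{thm:hotoposes:internal.whe.and.internal.homotopy.groups} together with the (tacit) identification of the internal $\pi_0$ and $\pi_n \argp{X, x}$ --- built from coequalisers, finite limits and cotensors in $\cat{\Sh{\mathcal{C}}{J}}$ --- with Jardine's sheaves of homotopy groups. Your elaboration of that identification, including the observation that sheafification preserves the finite-limit/cotensor constructions involved, is exactly the detail the paper leaves to the reader.
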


\begin{remark}
It should be emphasised that the weak equivalences in $\Kancx[\mathcal{S}]$ really are \emph{weak} equivalences: \citet{Bezem-Coquand:2013} have constructed a pair of internal Kan complexes in the presheaf topos $\mathcal{S} = \Func{\mathbf{3}}{\cat{\Set}}$ that are weakly homotopy equivalent (by virtue of being the two canonical fibres of an internal Kan fibration over the internal version of $\Delta^1$) but not homotopy equivalent.
\end{remark}

\section{Internal fibrant replacement}

Let $P^n$ be the partially ordered set of non-empty subsets of $\bracket{n}$ and, for each monotone map $f : \bracket{n} \to \bracket{m}$, let $f_* : P^n \to P^m$ be the map induced by taking images. Taking nerves, this defines a functor $\nv{P^{\bullet}} : \cat{\Simplex} \to \cat{\SSet}$. Note that there is a natural surjective monotone map ${\max} : P^n \to \bracket{n}$, each with a canonical (but not natural!) splitting, so we get a natural transformation $\nv{\max} : \nv{P^{\bullet}} \hoto \Delta^{\bullet}$ whose components are split epimorphisms.

\begin{dfn}
Let $\mathcal{S}$ be a locally small category with finite limits. An \strong{extension} of a simplicial object $X$ in $\mathcal{S}$ is a simplicial object $\Ex{X}$ equipped with bijections
\[
\Hom[\mathcal{S}]{T}{\Ex{X}_n} \cong \Ex{\Hom[\mathcal{S}]{T}{X}}_n = \Hom[\SSet]{\nv{P^n}}{\Hom[\mathcal{S}]{T}{X}}
\]
that are natural in both $n$ and $T$. The \strong{canonical embedding} $i_X : X \to \Ex{X}$ is the one induced by the natural transformation $\nv{\max} : \nv{P^{\bullet}} \hoto \Delta^{\bullet}$.
\end{dfn}

\begin{remark}
In other words, $\Ex{X}_n$ is the weighted limit $\kwlim{\nv{P^n}}{X}$; in particular, $\Ex{X}$ exists because $\mathcal{S}$ has finite weighted limits and each $\nv{P^n}$ is a finite simplicial set. Note that $\Ex{X}$ is unique up to unique isomorphism, so we obtain a functor $\Ex : \mbfs \mathcal{S} \to \mbfs \mathcal{S}$; and the canonical embeddings constitute a natural transformation $i : \id_{\mbfs \mathcal{S}} \hoto \Ex$.
\end{remark}

\begin{lem}
\label{lem:hotoposes:horn.fillers.and.Ex.for.internal.simplicial.objects}
Let $X$ be a simplicial object in a regular category $\mathcal{S}$. Consider the following pullback diagram in $\mathcal{S}$,
\[
\begin{tikzcd}
\kwlim{\Lambda^n_k}{\Ex{X}} \times_{\kwlim{\Lambda^n_k}{\Ex[2]{X}}} \kwlim{\Delta^n}{\Ex[2]{X}} \dar \rar &
\kwlim{\Delta^n}{\Ex[2]{X}} \dar \\
\kwlim{\Lambda^n_k}{\Ex{X}} \rar[swap]{\kwlim{\Lambda^n_k}{i_{\Ex{X}}}} &
\kwlim{\Lambda^n_k}{\Ex[2]{X}}
\end{tikzcd}
\]
where the morphism $\kwlim{\Delta^n}{\Ex[2]{X}} \to \kwlim{\Lambda^n_k}{\Ex[2]{X}}$ is induced by the horn inclusion $\Lambda^n_k \embedinto \Delta^n$. Then,
\[
\kwlim{\Lambda^n_k}{\Ex{X}} \times_{\kwlim{\Lambda^n_k}{\Ex[2]{X}}} \kwlim{\Delta^n}{\Ex[2]{X}} \to \kwlim{\Lambda^n_k}{\Ex{X}}
\]
is a regular epimorphism in $\mathcal{S}$.
\end{lem}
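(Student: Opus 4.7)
The plan is to reduce to the classical case $\mathcal{S} = \cat{\Set}$ via \autoref{thm:generalities:classical.completeness.for.regular.categories} and then invoke Kan's combinatorial lemma — the one underlying the proof that $\Ex[\infty]$ takes values in Kan complexes.

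For the reduction I would first pass to a small regular subcategory $\mathcal{S}_0 \subseteq \mathcal{S}$ containing all the $X_n$ and all morphisms appearing in the diagram. Barr's embedding theorem then supplies a conservative regular functor $\mathcal{S}_0 \to \cat{\Set}^B$; composing further with the evaluation functors $\cat{\Set}^B \to \cat{\Set}$ gives a jointly conservative family of regular functors $\mathcal{S}_0 \to \cat{\Set}$. Since a weighted limit $\kwlim{W}{\blank}$ with finite weight $W$ is a finite limit, it is preserved by regular functors; regular epimorphisms are likewise preserved by regular functors and reflected by conservative ones. This reduces the statement to the case $\mathcal{S} = \cat{\Set}$.

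In $\cat{\Set}$ the pullback in question is the set of pairs $\tuple{\alpha, \beta}$ with $\alpha : \Lambda^n_k \to \Ex{X}$ and $\beta : \Delta^n \to \Ex[2]{X}$ satisfying $\beta \circ \iota = i_{\Ex{X}} \circ \alpha$, where $\iota : \Lambda^n_k \embedinto \Delta^n$. The map to $\kwlim{\Lambda^n_k}{\Ex{X}}$ is the projection to $\alpha$, and a regular epimorphism is just a surjection, so the claim reduces to: every horn $\alpha : \Lambda^n_k \to \Ex{X}$ admits a filler in $\Ex[2]{X}$ after composition with $i_{\Ex{X}}$. Identifying $\nv{P^\bullet}$ with the usual barycentric subdivision $\Sd{\Delta^\bullet}$, so that $\Ex$ has a left adjoint $\Sd$, and transposing twice through $\Sd \dashv \Ex$, this is equivalent to the following extension problem: given any $\alpha' : \Sd{\Lambda^n_k} \to X$, extend the composite $\alpha' \circ \Sd{p_{\Lambda^n_k}} : \Sd[2]{\Lambda^n_k} \to X$ along $\Sd[2]{\iota}$, where $p : \Sd \hoto \id$ is the last-vertex natural transformation.

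The latter is Kan's classical combinatorial lemma: there is a simplicial map $r : \Sd[2]{\Delta^n} \to \Sd{\Lambda^n_k}$ with $r \circ \Sd[2]{\iota} = \Sd{p_{\Lambda^n_k}}$, and then $\beta' := \alpha' \circ r$ is the desired extension. I would cite Kan's original construction of $r$ rather than reproduce it; this is by far the hard part of the argument. Everything else — Barr's embedding, the adjunction transposition, and the identification of $\nv{P^n}$ with $\Sd{\Delta^n}$ — is formal bookkeeping.
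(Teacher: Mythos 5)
Your proposal is correct and takes essentially the same route as the paper: reduce to $\mathcal{S} = \cat{\Set}$ via Barr's embedding theorem (with the standard passage to a small regular subcategory) and then quote the classical fact (Kan's Lemma 3.2, \ie Goerss--Jardine III.4.7) that every horn $\Lambda^n_k \to \Ex{X}$ extends over $\Delta^n$ to $\Ex[2]{X}$ after composition with $i_{\Ex{X}}$. Your further transposition through $\Sd \dashv \Ex$ merely unwinds that cited statement to its combinatorial kernel, which you (like the paper) ultimately defer to Kan, so nothing essential differs.
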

\begin{proof} \exerproof
In the case where $\mathcal{S} = \cat{\Set}$, the claim is a reformulation of a well-known fact,\footnote{See Lemma 3.2 in \citep{Kan:1957a} or Lemma 4.7 in \citep[\Chap III]{GJ}.} namely the fact that every morphism $\Lambda^n_k \to \Ex{X}$ in $\cat{\SSet}$ fits into a commutative diagram of the form below:
\[
\begin{tikzcd}
\Lambda^n_k \dar[hookrightarrow] \rar &
\Ex{X} \dar{i_{\Ex{X}}} \\
\Delta^n \rar[dashed] &
\Ex[2]{X}
\end{tikzcd}
\]
For the general case, we appeal to \autoref{thm:generalities:classical.completeness.for.regular.categories}.
\end{proof}

\begin{lem}
\label{lem:hotoposes:internal.Kan.fibrations.and.Ex.for.internal.simplicial.objects}
Let $\mathcal{S}$ be a regular category. The functor $\Ex : \mbfs \mathcal{S} \to \mbfs \mathcal{S}$ preserves internal Kan fibrations. In particular, it preserves internal Kan complexes.
\end{lem}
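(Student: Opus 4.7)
My plan is to mirror the proof of \autoref{lem:hotoposes:horn.fillers.and.Ex.for.internal.simplicial.objects}: reduce to $\mathcal{S} = \cat{\Set}$ using Barr's classical completeness theorem and then invoke the standard fact that Kan's $\Ex$ preserves Kan fibrations of simplicial sets.

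The reduction rests on two preservation properties. First, the functor $\Ex$ is built pointwise from finite weighted limits, namely $\Ex{X}_n \cong \kwlim{\nv{P^n}}{X}$ where each $\nv{P^n}$ is a finite simplicial set, so every regular functor between regular categories commutes with $\Ex$ up to canonical isomorphism. Second, the weak pullback condition defining an internal Kan fibration involves only finite weighted limits, pullbacks, and regular epimorphisms, all of which are preserved by any regular functor and reflected by any conservative regular functor (for reflection of regular epimorphisms, one uses the image factorisation together with the fact that conservative regular functors reflect isomorphisms). Given an internal Kan fibration $p : X \to Y$ in $\mbfs{\mathcal{S}}$ and a horn inclusion $\Lambda^n_k \embedinto \Delta^n$, the finite diagram required to test whether $\Ex{p}$ is a weak pullback against this horn lies inside a small regular subcategory of $\mathcal{S}$, so \autoref{thm:generalities:classical.completeness.for.regular.categories} furnishes a conservative regular functor into some $\cat{\Set}^B$. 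Since regular epimorphisms, finite limits, and finite weighted limits in $\cat{\Set}^B$ are computed componentwise, the problem reduces to the case $\mathcal{S} = \cat{\Set}$.

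In the classical case, the weak pullback condition for $\Ex{p}$ against $\Lambda^n_k \embedinto \Delta^n$ is equivalent, by \autoref{lem:generalities:lifting.properties.and.pullback.products}, to the statement that $\Ex{p}$ has the right lifting property with respect to horn inclusions, which is a standard theorem of Kan. The final assertion of the lemma follows immediately from the first by specialising to $Y = 1$, since $\Ex$ preserves the terminal object as a finite weighted limit. I expect no real obstacle in this argument; it is essentially bookkeeping, with the decisive point being that the weights $\nv{P^n}$ defining $\Ex$ are \emph{finite} simplicial sets, which is what makes $\Ex$ commute with Barr's embedding.
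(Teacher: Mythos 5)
Your proof is correct and follows the same route as the paper: reduce to $\mathcal{S} = \cat{\Set}$ via Barr's completeness theorem (\autoref{thm:generalities:classical.completeness.for.regular.categories}) and then invoke Kan's classical result that $\Ex$ preserves Kan fibrations of simplicial sets. You merely make explicit the bookkeeping (finiteness of the weights $\nv{P^n}$, preservation/reflection by conservative regular functors, passage to a small regular subcategory) that the paper leaves as an exercise.
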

\begin{proof} \exerproof
By \autoref{thm:generalities:classical.completeness.for.regular.categories}, it suffices to prove the claim in the case where $\mathcal{S} = \cat{\Set}$, which is well known.\footnote{See Lemma 3.4 in \citep{Kan:1957a} or Lemma 4.5 in \citep[\Chap III]{GJ}.}
\end{proof}

\begin{lem}
\label{lem:hotoposes:Ex.is.internally.weakly.homotopy.equivalent}
For any internal Kan complex $X$ in a regular category $\mathcal{S}$, the canonical embedding $i_X : X \to \Ex{X}$ is a Dugger--Isaksen weak equivalence.
\end{lem}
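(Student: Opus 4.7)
The plan is to apply Barr's completeness theorem (Theorem \ref{thm:generalities:classical.completeness.for.regular.categories}) to reduce to the well-understood case $\mathcal{S} = \cat{\Set}$. I would first verify that the Dugger--Isaksen weak equivalence condition is preserved and reflected by conservative regular functors. The definition asserts that a certain comparison morphism built from the weighted limits $\kwlim{Z}{X}$ and $\kwlim{Z}{Y}$, with $Z$ ranging over the finite simplicial sets $\Delta^n$, $\partial\Delta^n$, and $D^{n+1}$, is a regular epimorphism in $\mathcal{S}$. Since $D^{n+1}$ is itself a pushout of finite simplicial sets, all the weighted limits appearing in the definition are in fact finite limits in $\mathcal{S}$. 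Any regular functor preserves finite limits and regular epimorphisms, and a conservative one additionally reflects regular epimorphisms, so the Dugger--Isaksen condition is both preserved and reflected.

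Applying Barr's theorem to an appropriate small regular subcategory of $\mathcal{S}$ containing the objects appearing in the diagram then yields a conservative regular functor into some $\cat{\Set}^B$; since finite limits and regular epimorphisms in $\cat{\Set}^B$ are computed componentwise, this reduces the claim to the case $\mathcal{S} = \cat{\Set}$. In that case, a classical theorem of Kan \citep{Kan:1957a} asserts that $i_X : X \to \Ex{X}$ is a weak homotopy equivalence for every simplicial set $X$. Moreover, Lemma \ref{lem:hotoposes:internal.Kan.fibrations.and.Ex.for.internal.simplicial.objects} guarantees that $\Ex{X}$ is a Kan complex whenever $X$ is, so that $i_X$ is actually a morphism in $\cat{\Kancx}$. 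Applying Theorem \ref{thm:hotoposes:Dugger-Isaksen.weak.equivalences} then converts the weak homotopy equivalence into the required Dugger--Isaksen weak equivalence.

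The only non-trivial input is Kan's classical theorem; the rest is routine bookkeeping about regular functors, in exactly the style used for the earlier lemmas marked with $\lozenge$. The one point worth double-checking is that every object appearing in the Dugger--Isaksen diagram really is a finite limit of the components $X_m$ and $Y_m$ of $X$ and $Y$, so that the Barr-style reduction is actually applicable; this is why it is essential that the weights $\Delta^n$, $\partial \Delta^n$, and $D^{n+1}$ are finite simplicial sets.
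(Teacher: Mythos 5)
Your proposal is correct and is essentially the paper's own argument: the paper likewise invokes Barr's completeness theorem (\autoref{thm:generalities:classical.completeness.for.regular.categories}) to reduce to $\mathcal{S} = \cat{\Set}$ and then cites Kan's classical result, with the translation into the Dugger--Isaksen formulation via \autoref{thm:hotoposes:Dugger-Isaksen.weak.equivalences} left implicit. Your additional checks (finiteness of the weights, preservation/reflection under conservative regular functors, and that $\Ex{X}$ is again a Kan complex) are exactly the routine bookkeeping the paper's terse proof relies on.
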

\begin{proof} \exerproof
By \autoref{thm:generalities:classical.completeness.for.regular.categories}, it suffices to prove the claim in the case where $\mathcal{S} = \cat{\Set}$, which is well known.\footnote{See Lemma 3.7 in \citep{Kan:1957a} or Theorem 4.6 in \citep[\Chap III]{GJ}.}
\end{proof}

\makenumpar
Let $\mathcal{S}$ be a locally small category with limits for finite diagrams and colimits for $\omega$-sequences. For each simplicial object $X$ in $\mathcal{S}$, we define $\Ex[\infty]{X}$ to be the colimit of the diagram below:
\[
\begin{tikzcd}
X \rar{i_X} &
\Ex{X} \rar{i_{\Ex{X}}} &
\Ex[2]{X} \rar{i_{\Ex[2]{X}}} &
\Ex[3]{X} \rar &
\cdots
\end{tikzcd}
\]
The above defines a functor $\Ex[\infty] : \mbfs \mathcal{S} \to \mbfs \mathcal{S}$ and a natural transformation $i^\infty : \id_{\mbfs \mathcal{S}} \hoto \Ex[\infty]$.

\begin{lem}
\label{lem:hotoposes:internal.Kan.fibrations.and.sequential.colimits}
Let $\mathcal{S}$ be a regular category with colimits for $\omega$-sequences. If $\indlim : \Func{\omega}{\mathcal{S}} \to \mathcal{S}$ preserves finite limits, then the following classes of morphisms are closed under colimits for $\omega$-sequences in $\mbfs \mathcal{S}$:
\begin{itemize}
\item The class of internal Kan fibrations of simplicial objects in $\mathcal{S}$.

\item The class of internal trivial Kan fibrations of simplicial objects in $\mathcal{S}$.

\item The class of Dugger--Isaksen weak equivalences of internal Kan complexes in $\mathcal{S}$.
\end{itemize}
\end{lem}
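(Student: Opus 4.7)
The plan is to reduce all three clauses to two auxiliary facts: (a) for every finite simplicial set $W$, the weighted limit $\kwlim{W}{\blank}$ commutes with $\omega$-colimits of simplicial objects in $\mathcal{S}$, and (b) an $\omega$-colimit of regular epimorphisms in $\mathcal{S}$ is again a regular epimorphism. Both facts depend on the hypothesis that $\indlim : \Func{\omega}{\mathcal{S}} \to \mathcal{S}$ preserves finite limits. Given (a) and (b), the rest is bookkeeping: each of the three conditions is expressed as requiring that a certain comparison morphism built out of finitely many instances of $\kwlim{W}{\blank}$ (with $W$ finite) and finite fibre products be a regular epimorphism, so fact (a) together with preservation of fibre products lets one pull $\indlim$ outside the construction, and then fact (b) concludes.

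For (a), I will use that a finite simplicial set $W$ has only finitely many non-degenerate simplices in each dimension, so $\kwlim{W}{X}$ is in each degree a finite limit indexed by the elements of $W$; the left-exactness hypothesis then yields a natural isomorphism $\kwlim{W}{\indlim X^{(n)}} \cong \indlim \kwlim{W}{X^{(n)}}$. This applies in particular to $W = \Delta^n$, $\Lambda^n_k$, $\partial \Delta^n$, and $D^{n+1}$ (the last being a pushout of finite simplicial sets). For (b), I observe that any regular epimorphism $e_n : A_n \to B_n$ is the coequaliser of its kernel pair $K_n \rightrightarrows A_n$. Since $\indlim$ preserves finite limits, the kernel pair of $\indlim e_n$ is $\indlim K_n \rightrightarrows \indlim A_n$; and because colimits commute with colimits, the coequaliser of this kernel pair is $\indlim B_n$. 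Hence $\indlim e_n$ is a regular epimorphism.

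Putting these together, given an $\omega$-sequence of internal Kan fibrations $f^{(n)} : X^{(n)} \to Y^{(n)}$ and a horn inclusion $\Lambda^n_k \embedinto \Delta^n$, fact (a) exhibits the relevant comparison morphism for $\indlim f^{(n)}$ as the $\omega$-colimit of the comparison morphisms for the individual $f^{(n)}$, each of which is a regular epimorphism, and then fact (b) closes the argument. The same reasoning handles internal trivial Kan fibrations (replacing horn inclusions by boundary inclusions), and Dugger--Isaksen weak equivalences via the diagram of \autoref{dfn:hotoposes:DI.whe}, every entry of which is a finite weighted limit or finite fibre product.

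The one non-formal step is fact (b); this is where the left-exactness hypothesis on $\indlim$ does real work, and I regard it as the main obstacle. The remaining subtlety is essentially notational: one must verify that the comparison morphism associated to $\indlim f^{(n)}$ really is natural in $n$ and therefore genuinely arises as an $\omega$-colimit of the level-$n$ comparison morphisms. Given the naturality of the isomorphism in (a) and the universal property of fibre products, this is routine but worth writing out carefully at least once.
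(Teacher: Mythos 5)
Your argument is correct, and it amounts to a hands-on unpacking of what the paper does more formally. The paper's proof is very short: since colimits commute with colimits, $\indlim : \Func{\omega}{\mathcal{S}} \to \mathcal{S}$ preserves regular epimorphisms, so under the hypothesis it is a \emph{regular} functor; and since restriction along $\ob \omega \embedinto \omega$ gives a conservative regular functor $\Func{\omega}{\mathcal{S}} \to \Func{\ob \omega}{\mathcal{S}}$, the internal Kan fibrations, internal trivial Kan fibrations and Dugger--Isaksen weak equivalences in the regular category $\Func{\omega}{\mathcal{S}}$ are exactly the componentwise ones; the conclusion then follows from the observation in \Sect 1 that these three properties are preserved by regular functors. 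Your facts (a) and (b) are precisely what that observation unwinds to here, so your route trades brevity for explicitness about where the hypothesis enters --- except that your accounting of this is backwards: fact (b) does not actually need left exactness. A componentwise regular epimorphism is, in each component, the coequaliser of its kernel pair, these kernel pairs assemble into an $\omega$-sequence, and $\indlim$ preserves coequalisers because colimits commute with colimits; hence the colimit morphism is a coequaliser of \emph{some} pair and therefore a regular epimorphism, with no need to identify that pair as its kernel pair. It is fact (a), the commutation of the finite weighted limits $\kwlim{W}{\blank}$ (for finite $W$, including $D^{n+1}$) with $\omega$-colimits, that genuinely uses the hypothesis. One further small omission: for the third bullet you should also note that the colimit of an $\omega$-sequence of internal Kan complexes is again an internal Kan complex, so that the colimit morphism really lies in $\cat{\Kancx[\mathcal{S}]}$; this follows at once from your first bullet applied to the morphisms $X \to 1$, but it is part of the claim and worth a sentence.
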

\begin{proof}
Colimits commute with colimits, so $\indlim : \Func{\omega}{\mathcal{S}} \to \mathcal{S}$ always preserves regular epimorphisms. Thus, the hypothesis implies $\indlim : \Func{\omega}{\mathcal{S}} \to \mathcal{S}$ is a regular functor. On the other hand, the functor $\Func{\omega}{\mathcal{S}} \to \Func{\ob \omega}{\mathcal{S}}$ induced by restriction along the inclusion $\ob \omega \embedinto \omega$ is a conservative regular functor, so the internal Kan fibrations (\resp internal trivial Kan fibrations, Dugger--Isaksen weak equivalences) in $\Func{\omega}{\mathcal{S}}$ are just the componentwise ones. Thus, the indicated classes of morphisms in $\mbfs \mathcal{S}$ are closed under colimits for $\omega$-sequences.
\end{proof}

\begin{thm}
\label{thm:hotoposes:Ex-infty.functor.for.simplicial.objects}
Let $\mathcal{S}$ be a regular category with colimits for $\omega$-sequences. If $\indlim : \Func{\omega}{\mathcal{S}} \to \mathcal{S}$ preserves finite limits, then:
\begin{enumerate}[(i)]
\item The functor $\Ex[\infty] : \mbfs \mathcal{S} \to \mbfs \mathcal{S}$ preserves finite limits and internal Kan fibrations.

\item For any simplicial object $X$ in $\mathcal{S}$, the simplicial object $\Ex[\infty]{X}$ is an internal Kan complex.

\item For any internal Kan complex $X$ in $\mathcal{S}$, the morphism $i^\infty_X : X \to \Ex[\infty]{X}$ is Dugger--Isaksen weak equivalence.
\end{enumerate}
\end{thm}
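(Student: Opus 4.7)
My plan is to derive all three parts from the lemmas already established, exploiting the fact that $\Ex[\infty]$ is by construction the $\omega$-colimit of the finite-weighted-limit functors $\Ex^m$. The hypothesis that $\indlim$ preserves finite limits is exactly what lets us commute the $\omega$-colimit past the finite weighted limits $\kwlim{Z}{\blank}$ for finite simplicial sets $Z$, giving $\kwlim{Z}{\Ex[\infty]{X}} \cong \indlim_m \kwlim{Z}{\Ex^m{X}}$; this isomorphism is the workhorse throughout.

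For (i), each $\Ex$ preserves finite limits levelwise because $\Ex{X}_n = \kwlim{\nv{P^n}}{X}$ is a finite weighted limit, so the same is true of each $\Ex^m$ by induction, and the hypothesis then yields preservation of finite limits by $\Ex[\infty]$. Preservation of internal Kan fibrations follows by iterating \autoref{lem:hotoposes:internal.Kan.fibrations.and.Ex.for.internal.simplicial.objects} and invoking the first bullet of \autoref{lem:hotoposes:internal.Kan.fibrations.and.sequential.colimits}.

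For (ii), I would apply the functor $\Ex^m$ to the conclusion of \autoref{lem:hotoposes:horn.fillers.and.Ex.for.internal.simplicial.objects} (applied to $X$) to obtain, for each $m \geq 0$, a regular epimorphism
\[
\kwlim{\Lambda^n_k}{\Ex^{m+1}{X}} \times_{\kwlim{\Lambda^n_k}{\Ex^{m+2}{X}}} \kwlim{\Delta^n}{\Ex^{m+2}{X}} \longrightarrow \kwlim{\Lambda^n_k}{\Ex^{m+1}{X}}
\]
in $\mathcal{S}$. Taking the $\omega$-colimit and using that $\indlim$ preserves both finite limits (by hypothesis) and regular epimorphisms (automatic for colimits), the right-hand leg of the pullback becomes the identity on $\kwlim{\Lambda^n_k}{\Ex[\infty]{X}}$, so the pullback collapses to $\kwlim{\Delta^n}{\Ex[\infty]{X}}$. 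The result is a regular epimorphism $\kwlim{\Delta^n}{\Ex[\infty]{X}} \to \kwlim{\Lambda^n_k}{\Ex[\infty]{X}}$, which is exactly the condition for $\Ex[\infty]{X}$ to be an internal Kan complex.

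For (iii), assuming $X$ is Kan, iteration of \autoref{lem:hotoposes:internal.Kan.fibrations.and.Ex.for.internal.simplicial.objects} gives that each $\Ex^m{X}$ is Kan, so by \autoref{lem:hotoposes:Ex.is.internally.weakly.homotopy.equivalent} each canonical embedding $\Ex^m{X} \to \Ex^{m+1}{X}$ is a Dugger--Isaksen weak equivalence. I regard $i^\infty_X$ as the colimit in $\Func{\omega}{\mbfs \mathcal{S}}$ of the natural transformation from the constant $\omega$-diagram on $X$ to the diagram $(\Ex^m{X})_m$; its $m$-th component is the finite composite $X \to \Ex^m{X}$, which is a DI weak equivalence by 2-out-of-3 (\autoref{thm:hotoposes:category.of.internally.fibrant.objects}(i)). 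The last bullet of \autoref{lem:hotoposes:internal.Kan.fibrations.and.sequential.colimits} then concludes that $i^\infty_X$ is a DI weak equivalence. The most delicate step is (ii), where the collapse of the pullback under the $\omega$-colimit rests crucially on the hypothesis that $\indlim$ preserves finite limits.
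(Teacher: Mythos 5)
Your proof is correct and takes essentially the same route as the paper's: it uses the hypothesis to commute the finite weighted limits $\kwlim{Z}{\blank}$ past the $\omega$-colimit, the horn-filler lemma at each stage of the tower for (ii), and closure of the three classes under $\omega$-colimits (\autoref{lem:hotoposes:internal.Kan.fibrations.and.sequential.colimits}) for (i) and (iii). One phrasing nit: in (ii) you should invoke \autoref{lem:hotoposes:horn.fillers.and.Ex.for.internal.simplicial.objects} with $X$ replaced by $\Ex[m]{X}$ rather than ``apply $\Ex[m]$ to the conclusion'' --- the conclusion is a morphism between weighted limits in $\mathcal{S}$, on which $\Ex[m]$ does not act (and $\Ex$, being a limit construction, need not preserve regular epimorphisms) --- but the displayed regular epimorphism you then use is exactly the right one, so this is only a matter of wording.
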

\begin{proof}
(i). It is clear that $\Ex : \mbfs \mathcal{S} \to \mbfs \mathcal{S}$ preserves finite limits, and by hypothesis, $\indlim : \Func{\omega}{\mathcal{S}} \to \mathcal{S}$ also preserves finite limits, so the same is true for $\Ex[\infty] : \mbfs \mathcal{S} \to \mbfs \mathcal{S}$. The preservation of internal Kan fibrations is a consequence of lemmas \ref{lem:hotoposes:internal.Kan.fibrations.and.Ex.for.internal.simplicial.objects} and \ref{lem:hotoposes:internal.Kan.fibrations.and.sequential.colimits}.

\bigskip\noindent
(ii). If $\indlim : \Func{\omega}{\mathcal{S}} \to \mathcal{S}$ preserves finite limits, then for any finite simplicial set $Z$, the functor $\kwlim{Z}{\blank} : \mbfs \mathcal{S} \to \mathcal{S}$ preserves colimits for $\omega$-sequences. In particular, we have a commutative diagram of the form below,
\[
\begin{tikzcd}
\indlim_{m : \omega} \kwlim{\Delta^n}{\Ex[m+2]{X}} \dar \rar{\cong} &
\kwlim{\Delta^n}{\Ex[\infty]{X}} \dar \\
\indlim_{m : \omega} \kwlim{\Lambda^n_k}{\Ex[m+2]{X}} \rar[swap]{\cong} &
\kwlim{\Lambda^n_k}{\Ex[\infty]{X}}
\end{tikzcd}
\]
where the horizontal arrows are the canonical comparisons and the vertical arrows are induced by the horn inclusion $\Lambda^n_k \embedinto \Delta^n$. \Autoref{lem:hotoposes:horn.fillers.and.Ex.for.internal.simplicial.objects} gives us the following pullback square in $\mathcal{S}$,
\[
\begin{tikzcd}
\kwlim{\Lambda^n_k}{\Ex[m+1]{X}} \times_{\kwlim{\Lambda^n_k}{\Ex[m+2]{X}}} \kwlim{\Delta^n}{\Ex[m+2]{X}} \dar \rar &
\kwlim{\Delta^n}{\Ex[m+2]{X}} \dar \\
\kwlim{\Lambda^n_k}{\Ex[m+1]{X}} \rar[swap]{\kwlim{\Lambda^n_k}{i_{\Ex[m+1]{X}}}} &
\kwlim{\Lambda^n_k}{\Ex[m+2]{X}}
\end{tikzcd}
\]
where $\kwlim{\Lambda^n_k}{\Ex[m+1]{X}} \times_{\kwlim{\Lambda^n_k}{\Ex[m+2]{X}}} \kwlim{\Delta^n}{\Ex[m+2]{X}}  \to \kwlim{\Lambda^n_k}{\Ex[m+1]{X}}$ is a regular epimorphism in $\mathcal{S}$. It is easy to see that
\[
\indlim_{m : \omega} {\kwlim{\Lambda^n_k}{i_{\Ex[m+1]{X}}}} : \indlim_{m : \omega} \kwlim{\Lambda^n_k}{\Ex[m+1]{X}} \to \indlim_{m : \omega} \kwlim{\Lambda^n_k}{\Ex[m+2]{X}}
\]
is an isomorphism in $\mathcal{S}$, so $\kwlim{\Delta^n}{\Ex[\infty]{X}} \to \kwlim{\Lambda^n_k}{\Ex[\infty]{X}}$ is indeed a regular epimorphism in $\mathcal{S}$, as required.

\bigskip\noindent
(iii). \Autoref{thm:hotoposes:category.of.internally.fibrant.objects} and \autoref{lem:hotoposes:Ex.is.internally.weakly.homotopy.equivalent}
imply that the composite morphism
\[
\begin{tikzcd}
X \rar{i_X} &
\Ex{X} \rar &
\cdots \rar &
\Ex[m]{X} \rar{i_{\Ex[m]{X}}} &
\Ex[m+1]{X}
\end{tikzcd}
\] 
is a Dugger--Isaksen weak equivalence, and since $i^\infty_X  : X \to \Ex[\infty]{X}$ is a colimit for the $\omega$-sequence of these composites, we may apply \autoref{lem:hotoposes:internal.Kan.fibrations.and.sequential.colimits} to deduce that it is also a Dugger--Isaksen weak equivalence.
\end{proof}

Following a suggestion of \citet[\Chap II, Introduction]{Quillen:1967}, we can define `weak homotopy equivalence' using Kan's $\Ex[\infty]$ functor to replace simplicial objects with internal Kan complexes. For reasons that will become clear in the next section, we will only make this definition when the base category is sufficiently well behaved.

\begin{dfn}
A \strong{$\sigma$-pretopos} is a category $\mathcal{C}$ with these properties:
\begin{itemize}
\item $\mathcal{C}$ is an effective regular category.

\item $\mathcal{C}$ has coproducts for countable families of objects, and these are moreover disjoint and pullback-stable.
\end{itemize}
\end{dfn}

\begin{remark}
By Theorem 5.15 in \citep{Shulman:2012c}, a $\sigma$-pretopos in the sense above is automatically $\sigma$-coherent; hence, this definition agrees with the one given by \citet[Part A, \Sect 1.4]{Johnstone:2002a}. 
\end{remark}

\begin{remark}
\label{rem:hotoposes:sigma.pretoposes.and.countable.colimits}
A $\sigma$-pretopos has coequalisers for all parallel pairs: see Lemma 1.4.19 in \citep[Part A]{Johnstone:2002a}. Thus, $\sigma$-pretoposes have colimits for all countable diagrams. Similarly, by Lemma 2.5.7 in \opcit, a $\sigma$-coherent functor preserves colimits for countable diagrams. Thus, by embedding a (small) $\sigma$-pretopos in the topos of sheaves for the $\sigma$-coherent topology, we find that colimits for $\omega$-sequences in a $\sigma$-pretopos commute with finite limits, as required in the hypothesis of \autoref{thm:hotoposes:Ex-infty.functor.for.simplicial.objects}. (See also Proposition 5.1.8 in \citep[Part D]{Johnstone:2002b}.)
\end{remark}

\begin{dfn}
A \strong{Kan weak equivalence} of simplicial objects in a $\sigma$-pretopos $\mathcal{S}$ is a morphism $f : X \to Y$ in $\mbfs \mathcal{S}$ such that the induced morphism $\Ex[\infty]{f} : \Ex[\infty]{X} \to \Ex[\infty]{Y}$ is a Dugger--Isaksen weak equivalence.
\end{dfn}

\begin{remark}
\label{rem:hotoposes:new.and.old.internal.whe.of.Kan.complexes}
Recalling the 2-out-of-3 property of Dugger--Isaksen weak equivalences,  \autoref{thm:hotoposes:Ex-infty.functor.for.simplicial.objects} implies that the Kan weak equivalences of internal Kan complexes are precisely the Dugger--Isaksen weak equivalences.
\end{remark}

To compare Kan weak equivalences of simplicial sets with weak homotopy equivalences in the usual sense, we require the following result:

\begin{lem}
\label{lem:ssets:Ex-infty.is.weakly.homotopy.equivalent}
For any simplicial set $X$, the canonical embedding $i^\infty_X : X \to \Ex[\infty]{X}$ is a weak homotopy equivalence.
\end{lem}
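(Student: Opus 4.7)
The plan is to reduce the claim to the analogous topological statement and invoke a classical one-step result.

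First, I would establish that for every simplicial set $X$, the realisation $\card{i_X} : \card{X} \to \card{\Ex{X}}$ is a homotopy equivalence of topological spaces. This is a classical theorem (Lemma 3.7 in \citep{Kan:1957a} or Theorem 4.6 in \citep[\Chap III]{GJ}), proved via the adjunction $\Sd \dashv \Ex$ together with the observation that the realisation of the last-vertex map $\Sd{X} \to X$ is a homotopy equivalence of CW complexes.

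Second, I would note that $i_X : X \to \Ex{X}$ is a monomorphism in $\cat{\SSet}$: at level $n$ it is restriction along the split epimorphism $\nv{\max} : \nv{P^n} \to \Delta^n$, hence an injection of sets. Consequently $\card{i_X}$ is a CW subcomplex inclusion, in particular a closed cofibration, and by functoriality each $\card{i_{\Ex[m]{X}}} : \card{\Ex[m]{X}} \to \card{\Ex[m+1]{X}}$ is likewise a closed cofibration and a homotopy equivalence.

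Third, geometric realisation is a left adjoint and so preserves sequential colimits; hence $\card{\Ex[\infty]{X}}$ is the colimit in topological spaces of the $\omega$-sequence $\card{X} \to \card{\Ex{X}} \to \card{\Ex[2]{X}} \to \cdots$. A standard mapping-telescope argument shows that the canonical map from any stage of an $\omega$-sequence of closed cofibrations that are homotopy equivalences into its colimit is itself a homotopy equivalence; in particular $\card{i^\infty_X}$ is a homotopy equivalence, and therefore $i^\infty_X$ is a weak homotopy equivalence of simplicial sets by definition.

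The single non-formal ingredient is the classical result for $\card{i_X}$, which we simply cite; the remainder is routine colimit technology. The conceptual point worth highlighting is that passing to realisations bypasses the fact that $i_X$ need not be a Dugger--Isaksen weak equivalence when $X$ fails to be Kan, so we cannot iterate \autoref{lem:hotoposes:Ex.is.internally.weakly.homotopy.equivalent} directly at the level of simplicial sets.
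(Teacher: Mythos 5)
Your colimit machinery is fine: $i_X$ is levelwise injective (it is restriction along the split epimorphism $\nv{\max} : \nv{P^n} \to \Delta^n$), so each $i_{\Ex[m]{X}}$ realises to a CW subcomplex inclusion; geometric realisation preserves sequential colimits; and an $\omega$-composite of closed cofibrations that are homotopy equivalences is again a homotopy equivalence onto the colimit. So the telescope argument correctly reduces the lemma to the one-step assertion that $i_Y : Y \to \Ex{Y}$ is a weak homotopy equivalence for \emph{every} simplicial set $Y$ (note you need it for every $\Ex[m]{X}$, none of which need be fibrant for finite $m$). The gap is in how you support that assertion. Kan's Lemma 3.7 --- the result the paper invokes in \autoref{lem:hotoposes:Ex.is.internally.weakly.homotopy.equivalent} --- only treats complexes satisfying the extension condition, so it does not cover the non-fibrant case you need; and Theorem 4.6 of \citep[\Chap III]{GJ} is precisely the statement that $i^\infty_X : X \to \Ex[\infty]{X}$ is a weak equivalence, i.e.\ the lemma you are proving, so citing it at this point is circular (or makes the rest of your argument redundant). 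The parenthetical sketch does not close this gap either: knowing $\Sd \dashv \Ex$ and that the last-vertex map $\Sd{Y} \to Y$ realises to a homotopy equivalence does not formally imply that its transpose $i_Y$ is a weak equivalence --- adjoint transposes of weak equivalences need not be weak equivalences, and this is exactly where the substance of the classical proofs lies.

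With a reference that genuinely establishes the one-step statement for arbitrary simplicial sets (it is true, and can be found, e.g., in Fritsch and Piccinini's book on cellular structures in topology), your argument becomes a correct, if roundabout, alternative; but that input is of essentially the same depth as the lemma itself, which is why the paper offers no argument at all and simply cites Kan's Lemma 6.5 in \citep{Kan:1957a} or Theorem 4.6 in \citep[\Chap III]{GJ} for the $\Ex[\infty]$ statement directly.
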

\begin{proof} \openproof
See Lemma 6.5 in \citep{Kan:1957a} or Theorem 4.6 in \citep[\Chap III]{GJ}.
\end{proof}

\begin{cor}
\label{cor:hotoposes:internal.whe.of.simplicial.sets}
A morphism of simplicial sets is a Kan weak equivalence if and only if it is a weak homotopy equivalence. \qed
\end{cor}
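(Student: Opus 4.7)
The plan is to chain together three equivalences using the naturality square
\[
\begin{tikzcd}
X \dar[swap]{f} \rar{i^\infty_X} & \Ex[\infty]{X} \dar{\Ex[\infty]{f}} \\
Y \rar[swap]{i^\infty_Y} & \Ex[\infty]{Y}
\end{tikzcd}
\]
associated to the natural transformation $i^\infty : \id_{\mbfs \cat{\Set}} \hoto \Ex[\infty]$.

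First, I would unfold the definition: by definition, $f$ is a Kan weak equivalence if and only if $\Ex[\infty]{f}$ is a Dugger--Isaksen weak equivalence. Since $\cat{\Set}$ is a $\sigma$-pretopos and $\indlim : \Func{\omega}{\cat{\Set}} \to \cat{\Set}$ preserves finite limits, \autoref{thm:hotoposes:Ex-infty.functor.for.simplicial.objects}(ii) tells us that both $\Ex[\infty]{X}$ and $\Ex[\infty]{Y}$ are internal Kan complexes, i.e.\ Kan complexes in the usual sense (by \autoref{rem:hotoposes:internal.Kan.fibrations.in.Set}). So $\Ex[\infty]{f}$ is a morphism between Kan complexes, and \autoref{thm:hotoposes:Dugger-Isaksen.weak.equivalences} applies: $\Ex[\infty]{f}$ is a Dugger--Isaksen weak equivalence if and only if it is a weak homotopy equivalence.

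Next, by \autoref{lem:ssets:Ex-infty.is.weakly.homotopy.equivalent}, both horizontal arrows $i^\infty_X$ and $i^\infty_Y$ in the square above are weak homotopy equivalences of simplicial sets. Since classical weak homotopy equivalences satisfy the 2-out-of-3 property, commutativity of the square gives $\Ex[\infty]{f}$ is a weak homotopy equivalence if and only if $f$ is. Stringing the three biconditionals together yields the claim.

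There is essentially no obstacle here; the corollary is precisely the compatibility check between our internal definition and the classical one, and each step is a direct invocation of a previously recorded lemma. The only small care needed is to observe that Kan complexes in the internal sense for $\mathcal{S} = \cat{\Set}$ coincide with ordinary Kan complexes, so that the hypothesis ``$f$ is a morphism in $\cat{\Kancx[\cat{\Set}]}$'' in \autoref{thm:hotoposes:Dugger-Isaksen.weak.equivalences} is actually satisfied by $\Ex[\infty]{f}$.
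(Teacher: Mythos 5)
Your proof is correct and is exactly the argument the paper intends (it leaves the corollary as an immediate consequence of Lemma \ref{lem:ssets:Ex-infty.is.weakly.homotopy.equivalent}): unfold the definition of Kan weak equivalence, identify Dugger--Isaksen weak equivalences between the Kan complexes $\Ex[\infty]{X}$ and $\Ex[\infty]{Y}$ with classical weak homotopy equivalences via Theorem \ref{thm:hotoposes:Dugger-Isaksen.weak.equivalences}, and transfer along the naturality square for $i^\infty$ using 2-out-of-3. No gaps; your care in checking that $\Ex[\infty]{f}$ really lies in $\cat{\Kancx[\cat{\Set}]}$ is exactly the right point to verify.
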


Unfortunately, as the construction of $\Ex[\infty]$ involves infinite colimits, we cannot apply the classical completeness theorem for regular logic to transfer results from $\cat{\Set}$ to more general settings. Instead, we must use more subtle machinery.

\section{Weak homotopy equivalences}

Recall the following results:

\begin{thm}[Raptis--Rosický]
\label{thm:ssets:Raptis-Rosicky}
Let $\cat{\SSet}$ be the category of simplicial sets, let $\Func{\mathbf{2}}{\cat{\SSet}}$ be the category of morphisms, and let $\mathcal{W}$ be the full subcategory of $\Func{\mathbf{2}}{\cat{\SSet}}$ spanned by the weak homotopy equivalences.
\begin{enumerate}[(i)]
\item $\mathcal{W}$ is closed under filtered colimits in $\Func{\mathbf{2}}{\cat{\SSet}}$.

\item The weak homotopy equivalences of finite simplicial sets constitute an essentially small solution set for the inclusion $\mathcal{W} \embedinto \Func{\mathbf{2}}{\cat{\SSet}}$.

\item $\mathcal{W}$ is a finitely accessible category, and the inclusion $\mathcal{W} \embedinto \Func{\mathbf{2}}{\cat{\SSet}}$ preserves finitely presentable objects.
\end{enumerate}
\end{thm}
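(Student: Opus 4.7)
The plan is to address the three parts in order, with part~(ii) presenting the principal obstacle.

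For part~(i), I would use \autoref{cor:hotoposes:internal.whe.of.simplicial.sets} to identify weak homotopy equivalences in $\cat{\SSet}$ with Kan weak equivalences, so that the problem reduces to showing that $\Ex[\infty]$ preserves filtered colimits and that Dugger--Isaksen weak equivalences between internal Kan complexes are closed under filtered colimits of arrows. Both assertions rest on two basic facts about filtered colimits in $\cat{\Set}$: they commute with finite limits, and they preserve regular epimorphisms. The functor $\Ex[\infty]$ is assembled from finite weighted limits and $\omega$-sequential colimits, so it preserves filtered colimits; and the defining condition of \autoref{dfn:hotoposes:DI.whe} involves only finite weighted limits together with a single regular-epi requirement, so it is stable under filtered colimits of arrows.

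Part~(ii) is the main obstacle. The plan here is to exploit the local finite presentability of $\cat{\SSet}$---with finite simplicial sets as the finitely presentable objects---together with the compactness of the data witnessing weak equivalence. The core lemma to establish is: given a weak homotopy equivalence $f \colon X \to Y$ and finite subcomplexes $A_0 \subseteq X$, $B_0 \subseteq Y$ with $f(A_0) \subseteq B_0$, there exist finite subcomplexes $A \supseteq A_0$ and $B \supseteq B_0$ with $f(A) \subseteq B$ such that the restriction $f|_A : A \to B$ is itself a weak homotopy equivalence. I would prove this by an $\omega$-chain construction, at each stage adjoining a finite amount of data to resolve an extant failure of bijectivity of $\pi_n$ (each failure being finitely witnessed by compactness of $\partial \Delta^{n+1}$ and its homotopies), and then taking the union. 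The lemma then implies that the poset of such pairs $(A, B)$ is filtered and cofinal among all pairs of finite subobjects of $(X, Y)$, so every weak equivalence is a filtered colimit in $\Func{\mathbf{2}}{\cat{\SSet}}$ of weak equivalences between finite simplicial sets; the (essentially small) collection of finite weak equivalences then serves as the required solution set.

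For part~(iii), I would invoke the standard result from the theory of accessible categories---that a full subcategory of a locally finitely presentable category which is closed under filtered colimits and admits an essentially small solution set of finitely presentable objects is itself finitely accessible, with its finitely presentable objects being precisely the ambient finitely presentable objects lying in the subcategory. Applying this principle to $\mathcal{W} \embedinto \Func{\mathbf{2}}{\cat{\SSet}}$ via parts (i) and (ii) then yields the desired conclusion.
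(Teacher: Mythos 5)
There is a genuine gap, and it sits exactly at the point you identify as the main obstacle. Your ``core lemma'' asks for \emph{finite} subcomplexes $A \supseteq A_0$, $B \supseteq B_0$ with $f|_A : A \to B$ a weak equivalence, but the proof you propose --- an $\omega$-chain in which each stage adjoins finitely many simplices to repair a failure of $\pi_n$-bijectivity, followed by taking the union --- produces a \emph{countable} pair $(A,B)$, not a finite one: there is no reason for the repair process to terminate, since killing an obstruction in dimension $n$ (by adjoining the finitely many simplices of a lift and a homotopy found in $X$ and $Y$) typically creates new obstructions in higher dimensions. What this closing-off argument actually proves is the classical folklore fact that every weak equivalence is a directed union of weak equivalences between \emph{countable} subcomplexes, i.e.\ $\omega_1$-accessibility of $\mathcal{W}$; the whole content of the Raptis--Rosick\'y theorem is the drop from $\omega_1$ to $\omega$, and that cannot be obtained by subcomplex approximation. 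Indeed, finite accessibility only asserts that every weak equivalence is a canonical filtered colimit of weak equivalences between finite simplicial sets with \emph{arbitrary} (in general non-monic) transition maps; the subcomplex form of your lemma is not established by your argument and is not what the theorem provides. (A cautionary analogue: the only finite Kan complexes are essentially discrete, so finiteness interacts very badly with homotopical conditions, and one should not expect finite witnesses to sit inside $X$ and $Y$ as subcomplexes.) Note also that the paper itself does not prove this statement: it is quoted from \citet{Raptis-Rosicky:2014}, whose proof proceeds quite differently.

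Two smaller points. First, the principle you invoke in part~(iii) --- closure under filtered colimits plus an essentially small solution set of finitely presentable objects implies finite accessibility --- is not a standard theorem in that form; solution-set conditions combined with closure under filtered colimits yield accessibility at \emph{some} rank, and finite accessibility requires the stronger density statement (every object of $\mathcal{W}$ is a filtered colimit of weak equivalences of finite simplicial sets) that your part~(ii) was supposed to deliver, so (iii) inherits the gap. Second, part~(i) is fine: closure of $\mathcal{W}$ under filtered colimits follows, as you say, from the facts that $\Ex[\infty]$ is built from finite weighted limits and $\omega$-colimits and that the condition of \autoref{dfn:hotoposes:DI.whe} is finitary, or more classically from the compatibility of homotopy groups with filtered colimits.
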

\begin{proof} \openproof
See Theorem A in \citep{Raptis-Rosicky:2014}.
\end{proof}

\begin{thm}[Makkai--Paré]
\label{thm:logic:theories.of.presheaf.type}
Let $\mathcal{C}$ be a finitely accessible category, let $\Kompakt{\mathcal{C}}$ be the full subcategory of finitely presentable objects in $\mathcal{C}$, and let $\mathcal{B} = \Func{\Kompakt{\mathcal{C}}}{\cat{\Set}}$. Then $\mathcal{C}$ is equivalent to the full subcategory of $\Func{\mathcal{B}}{\cat{\Set}}$ spanned by those functors $\mathcal{B} \to \cat{\Set}$ that preserve limits for finite diagrams and colimits for all diagrams, where the equivalence sends an object $C$ in $\mathcal{C}$ to a functor $\mathcal{B} \to \cat{\Set}$ that sends the representable functor $\Hom[\Kompakt{\mathcal{C}}]{T}{\blank}$ to the set $\Hom[\mathcal{C}]{T}{C}$.
\end{thm}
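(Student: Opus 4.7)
The plan is to identify both sides with $\Ind(\Kompakt{\mathcal{C}})$ via a concatenation of three classical equivalences. First, by the definition of finite accessibility, $\Kompakt{\mathcal{C}}$ is essentially small and the canonical comparison $\Ind(\Kompakt{\mathcal{C}}) \to \mathcal{C}$ is an equivalence; so it suffices to identify the indicated full subcategory of $\Func{\mathcal{B}}{\cat{\Set}}$ with $\Ind(\Kompakt{\mathcal{C}})$.

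Next, I would recognise $\mathcal{B} = \Func{\Kompakt{\mathcal{C}}}{\cat{\Set}}$ as the presheaf topos on $\op{\Kompakt{\mathcal{C}}}$, \ie as the free cocompletion of $\op{\Kompakt{\mathcal{C}}}$ along the Yoneda embedding $y : \op{\Kompakt{\mathcal{C}}} \to \mathcal{B}$, $T \mapsto \Hom[\Kompakt{\mathcal{C}}]{T}{\blank}$. By the universal property of free cocompletions, restriction along $y$ gives an equivalence between cocontinuous functors $\mathcal{B} \to \cat{\Set}$ and arbitrary functors $\op{\Kompakt{\mathcal{C}}} \to \cat{\Set}$. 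Diaconescu's theorem, applied to the presheaf topos on $\op{\Kompakt{\mathcal{C}}}$, adds that such a cocontinuous extension additionally preserves finite limits if and only if the restriction is \emph{flat}, \ie has a filtered category of elements; and the full subcategory of flat functors $\op{\Kompakt{\mathcal{C}}} \to \cat{\Set}$ is, by construction of the ind-completion as the free filtered cocompletion, precisely $\Ind(\Kompakt{\mathcal{C}})$.

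Composing these equivalences yields the desired equivalence between $\mathcal{C}$ and the finite-limit, all-colimit preserving functors $\mathcal{B} \to \cat{\Set}$. Tracing an object $C = \indlim_i A_i$ through the chain shows that it lands on the cocontinuous extension of the functor $\op{\Kompakt{\mathcal{C}}} \to \cat{\Set}$, $T \mapsto \indlim_i \Hom[\Kompakt{\mathcal{C}}]{T}{A_i} \cong \Hom[\mathcal{C}]{T}{C}$ (the isomorphism using compactness of $T$), and this extension sends $y(T) = \Hom[\Kompakt{\mathcal{C}}]{T}{\blank}$ to $\Hom[\mathcal{C}]{T}{C}$, as required. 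The only real obstacle is careful bookkeeping of opposite categories when applying Diaconescu's theorem: since $\op{\Kompakt{\mathcal{C}}}$ has no reason to possess finite colimits, flatness has to be phrased in terms of filtered categories of elements rather than as left exactness of the functor on $\op{\Kompakt{\mathcal{C}}}$ itself.
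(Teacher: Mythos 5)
Your argument is correct, and it is worth pointing out that the paper does not actually prove this theorem at all: it is imported from the literature, with pointers to Corollary 1.2.5 and Proposition 2.1.8 in \citep{Makkai-Pare:1989} and Theorem 2.26 in \citep{LPAC}. What you have written is essentially the standard proof underlying those cited results, so your route is a legitimate self-contained substitute rather than a divergence in substance: you factor the statement as $\mathcal{C} \simeq \Ind{\Kompakt{\mathcal{C}}}$, identify cocontinuous functors $\mathcal{B} \to \cat{\Set}$ with arbitrary functors $\op{\Kompakt{\mathcal{C}}} \to \cat{\Set}$ by restriction along the Yoneda embedding, and then use flatness to cut down to the finite-limit-preserving ones. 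Three small points deserve explicit care. First, the assertion that $\Kompakt{\mathcal{C}}$ is essentially small and that the canonical comparison $\Ind{\Kompakt{\mathcal{C}}} \to \mathcal{C}$ is an equivalence when $\Kompakt{\mathcal{C}}$ consists of \emph{all} finitely presentable objects is a (standard, short) theorem rather than literally the definition of finite accessibility: one needs the retract argument showing that every finitely presentable object is a retract of an object in the chosen generating family. Second, to get an equivalence of \emph{full subcategories} of $\Func{\mathcal{B}}{\cat{\Set}}$ you should note that restriction along the Yoneda embedding is fully faithful on cocontinuous functors (because every object of $\mathcal{B}$ is canonically a colimit of representables), so that restriction and left Kan extension are genuinely quasi-inverse between cocontinuous functors on $\mathcal{B}$ and all functors on $\op{\Kompakt{\mathcal{C}}}$. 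Third, as you yourself anticipate, the slogan \enquote{flat means filtered category of elements} is sensitive to the orientation chosen for the category of elements of a covariant functor: with the usual orientation, a functor $\op{\Kompakt{\mathcal{C}}} \to \cat{\Set}$ is flat precisely when its category of elements is cofiltered, equivalently when it is a filtered colimit of the representables $\Hom[\Kompakt{\mathcal{C}}]{\blank}{T}$, and it is this latter description that gives the identification with $\Ind{\Kompakt{\mathcal{C}}}$ you need. Granting these glosses, your tracing of an object $C$ through the chain, landing on the functor that sends $\Hom[\Kompakt{\mathcal{C}}]{T}{\blank}$ to $\Hom[\mathcal{C}]{T}{C}$ via finite presentability of $T$, is exactly right.
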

\begin{proof} \openproof
See Corollary 1.2.5 and Proposition 2.1.8 in \citep{Makkai-Pare:1989}, or Theorem 2.26 in \citep{LPAC}.
\end{proof}

The topos $\mathcal{B}$ constructed in the Makkai--Paré theorem is, loosely speaking, a \strong{classifying topos} for $\mathcal{C}$.\footnote{Strictly speaking, the notion of classifying topos is only defined for categories ``parametrised'' by Grothendieck toposes, and these categories are not always determined by their realisation over $\cat{\Set}$. Nonetheless, for a finitely accessible category $\mathcal{C}$, one can define the category of $\mathcal{C}$-objects in any Grothendieck topos, and \emph{this} is what $\mathcal{B}$ classifies.} For instance, if $\mathcal{C} = \cat{\Set}$, then $\mathcal{B}$ classifies objects; if $\mathcal{C} = \cat{\SSet}$, then $\mathcal{B}$ classifies simplicial objects; and if $\mathcal{C} = \Func{\mathbf{2}}{\cat{\SSet}}$, then $\mathcal{B}$ classifies morphisms between simplicial objects. In each case, $\mathcal{B}$ contains a universal instance of the kind of structure being classified, and this structure can be pulled back along a geometric morphism (with codomain $\mathcal{B}$) to obtain a structure on the domain. 

Now, let $\mathcal{B}_\mathrm{whe}$ be classifying topos for weak homotopy equivalences and consider the universal weak homotopy equivalence $u : A \to B$. This is a morphism between simplicial objects in $\mathcal{B}_\mathrm{whe}$, and by \autoref{thm:logic:theories.of.presheaf.type}, we may identify $\mathcal{B}_\mathrm{whe}$ with a topos of the form $\Func{\mathcal{K}}{\cat{\Set}}$, where $\mathcal{K}$ is a small category. Since limits and colimits in $\Func{\mathcal{K}}{\cat{\Set}}$ are computed componentwise, it follows that components of $u : A \to B$ are weak homotopy equivalences of simplicial sets. We make the following definition:

\begin{dfn}
An \strong{internal weak homotopy equivalence} of simplicial objects in a Grothendieck topos $\mathcal{S}$ is a morphism $f : X \to Y$ in $\mbfs \mathcal{S}$ that is isomorphic to one of the form $c^* u : c^* A \to c^* B$ for some geometric morphism $c : \mathcal{S} \to \mathcal{B}_\mathrm{whe}$.
\end{dfn}

\begin{remark}
If $\mathcal{S} = \cat{\Set}$, then an internal weak homotopy equivalence is just a weak homotopy equivalence in the usual sense, by \autoref{thm:logic:theories.of.presheaf.type}. If $\mathcal{S} = \Func{\op{\mathcal{C}}}{\cat{\Set}}$ for a small category $\mathcal{C}$, then an internal weak homotopy equivalence is the same thing as a componentwise weak homotopy equivalence, because the evaluation functors $\Func{\op{\mathcal{C}}}{\cat{\Set}} \to \cat{\Set}$ preserve limits and colimits and are jointly conservative.
\end{remark}

\begin{lem}
\label{lem:hotoposes:Raptis-Rosicky.weak.equivalences.are.Kan.weak.equivalences}
Every internal weak homotopy equivalence of simplicial objects in a Grothendieck topos is also a Kan weak equivalence.
\end{lem}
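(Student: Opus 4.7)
The plan is to reduce to the universal case. By the Makkai--Paré theorem (\autoref{thm:logic:theories.of.presheaf.type}), the topos $\mathcal{B}_\mathrm{whe}$ may be identified with a presheaf topos $\Func{\mathcal{K}}{\cat{\Set}}$, in which limits and colimits are computed componentwise. By the discussion preceding the definition of internal weak homotopy equivalence, the components of the universal morphism $u : A \to B$ are ordinary weak homotopy equivalences of simplicial sets. I will first argue that $u$ is itself a Kan weak equivalence in $\mbfs \mathcal{B}_\mathrm{whe}$ and then that Kan weak equivalences are preserved by the inverse image $c^*$ of any geometric morphism $c : \mathcal{S} \to \mathcal{B}_\mathrm{whe}$; since $f \cong c^* u$ by hypothesis, these two facts together give the conclusion.

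For the first step, \autoref{cor:hotoposes:internal.whe.of.simplicial.sets} converts the fact that each component $u_k$ is a weak homotopy equivalence of simplicial sets into the statement that each $\Ex[\infty]{u_k}$ is a Dugger--Isaksen weak equivalence in $\cat{\Set}$. The functor $\Ex$ is given by finite weighted limits against the finite simplicial sets $\nv{P^n}$, and $\Ex[\infty]$ is obtained from $\Ex$ by an $\omega$-sequential colimit; since both finite limits and all colimits in $\Func{\mathcal{K}}{\cat{\Set}}$ are computed pointwise, the $k$-th component of $\Ex[\infty]{u}$ is precisely $\Ex[\infty]{u_k}$. The defining condition of a Dugger--Isaksen weak equivalence (\autoref{dfn:hotoposes:DI.whe}) is phrased entirely in terms of finite limits and regular epimorphisms in the ambient category, and both are pointwise in $\Func{\mathcal{K}}{\cat{\Set}}$ (compare \autoref{rem:hotoposes:internal.Kan.fibrations.in.Set}); hence $\Ex[\infty]{u}$ is a Dugger--Isaksen weak equivalence in $\mbfs \mathcal{B}_\mathrm{whe}$, and $u$ is a Kan weak equivalence.

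For the second step, the inverse image $c^*$ preserves all colimits (it is a left adjoint) and finite limits (by definition of geometric morphism), so it commutes with the finite weighted limits defining $\Ex$ and with the $\omega$-sequential colimit defining $\Ex[\infty]$; this furnishes a canonical natural isomorphism between $c^* \Ex[\infty]{X}$ and $\Ex[\infty]{c^* X}$ for every simplicial object $X$. Moreover $c^*$ is a regular functor, so it preserves weak pullback squares, and therefore internal Kan fibrations, internal Kan complexes, and Dugger--Isaksen weak equivalences. Applying $c^*$ to the Dugger--Isaksen weak equivalence $\Ex[\infty]{u}$ produces a Dugger--Isaksen weak equivalence in $\mbfs \mathcal{S}$ which, via the natural isomorphism just exhibited, is identified with $\Ex[\infty]{c^* u}$; hence $c^* u$ is a Kan weak equivalence, and so is $f$.

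The only slightly delicate point is the identification $c^* \Ex[\infty] \cong \Ex[\infty] c^*$, but this rests only on recognising $\Ex$ as a finite weighted limit against $\nv{P^n}$ and on $c^*$ being left exact and cocontinuous. Everything else is a routine consequence of the componentwise computation of limits, colimits and regular epimorphisms in a presheaf topos.
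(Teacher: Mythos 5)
Your proof is correct and follows essentially the same route as the paper: reduce to the universal morphism $u : A \to B$ in $\mathcal{B}_\mathrm{whe}$ using that inverse image functors commute with $\Ex[\infty]$ and preserve Dugger--Isaksen weak equivalences, then exploit the presheaf-topos description of $\mathcal{B}_\mathrm{whe}$ to settle the universal case. The only cosmetic difference is that you verify directly, componentwise, that $\Ex[\infty]{u}$ is a Dugger--Isaksen weak equivalence (via \autoref{cor:hotoposes:internal.whe.of.simplicial.sets}), whereas the paper reaches the same conclusion by the 2-out-of-3 property of internal weak homotopy equivalences in the presheaf topos applied to the naturality square for $i^\infty$ --- both rest on the same componentwise facts.
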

\begin{proof}
Since inverse image functors preserve Dugger--Isaksen weak equivalences and commute with $\Ex[\infty]$, they also preserve Kan weak equivalences; thus it suffices to prove the claim for the universal weak homotopy equivalence $u : A \to B$ in the classifying topos $\mathcal{B}_\mathrm{whe}$.

By \autoref{thm:ssets:Raptis-Rosicky} and \autoref{thm:logic:theories.of.presheaf.type}, $\mathcal{B}_\mathrm{whe}$ is a presheaf topos, so the class of internal weak homotopy equivalences of simplicial objects in $\mathcal{B}_\mathrm{whe}$ has the 2-out-of-3 property. In particular, $\Ex[\infty]{u} : \Ex[\infty]{A} \to \Ex[\infty]{B}$ is an internal weak homotopy equivalence of internal Kan complexes, hence also a Dugger--Isaksen weak equivalence. Thus, $u : A \to B$ itself is a Kan weak equivalence, as required.
\end{proof}

\begin{prop}
\label{prop:hotoposes:Ex-infty.is.internally.weakly.homotopy.equivalent}
For any simplicial object $X$ is a Grothendieck topos $\mathcal{S}$, the canonical embedding $i^\infty_X : X \to \Ex[\infty]{X}$ is an internal weak homotopy equivalence (hence, a Kan weak equivalence).
\end{prop}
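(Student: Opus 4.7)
The plan is to recognise $i^\infty_X$ as the inverse image of a universal canonical embedding living in the classifying topos for simplicial objects, and then exploit the fact that the latter is itself a presheaf topos. Since the category $\cat{\SSet}$ of simplicial sets is finitely accessible, \autoref{thm:logic:theories.of.presheaf.type} produces a presheaf topos $\mathcal{B}_\mathrm{sSet} = \Func{\Kompakt{\cat{\SSet}}}{\cat{\Set}}$ equipped with a universal simplicial object $A$; for any Grothendieck topos $\mathcal{S}$ and any simplicial object $X$ in $\mathcal{S}$ there is an essentially unique geometric morphism $c : \mathcal{S} \to \mathcal{B}_\mathrm{sSet}$ with $c^* A \cong X$.

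The core step is to show that the canonical embedding $i^\infty_A : A \to \Ex[\infty]{A}$ is itself an internal weak homotopy equivalence in $\mathcal{B}_\mathrm{sSet}$. Since $\mathcal{B}_\mathrm{sSet}$ is a presheaf topos, its evaluation functors preserve all limits and colimits and are jointly conservative; moreover $\Ex[\infty]$ is constructed from finite weighted limits and $\omega$-colimits, so each evaluation sends $i^\infty_A$ to the canonical embedding $i^\infty_{A_k} : A_k \to \Ex[\infty]{A_k}$ of an ordinary simplicial set $A_k$. \Autoref{lem:ssets:Ex-infty.is.weakly.homotopy.equivalent} shows that every such map is a weak homotopy equivalence of simplicial sets, and the remark identifying internal weak homotopy equivalences in a presheaf topos with componentwise weak homotopy equivalences then implies that $i^\infty_A$ is an internal weak homotopy equivalence.

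To finish, I would transfer the result to $\mathcal{S}$: because $c^*$ preserves finite limits and all colimits, it commutes with the $\Ex[\infty]$ construction, so $c^*(i^\infty_A) \cong i^\infty_X$ naturally. Writing $i^\infty_A \cong d^* u$ for the universal weak homotopy equivalence $u$ in $\mathcal{B}_\mathrm{whe}$ and the classifying geometric morphism $d : \mathcal{B}_\mathrm{sSet} \to \mathcal{B}_\mathrm{whe}$, we obtain $i^\infty_X \cong (d \circ c)^* u$, exhibiting $i^\infty_X$ as an internal weak homotopy equivalence; the parenthetical assertion then follows from \autoref{lem:hotoposes:Raptis-Rosicky.weak.equivalences.are.Kan.weak.equivalences}. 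The main obstacle to this strategy is verifying that $\Ex[\infty]$ genuinely commutes with both the evaluation functors and the inverse image functors in play --- this is precisely why the construction is built from finite weighted limits and $\omega$-colimits, and it is what ultimately permits the classifying topos formalism to be invoked here.
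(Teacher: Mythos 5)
Your proposal is correct and takes essentially the same route as the paper: reduce to the universal simplicial object in the classifying topos for simplicial objects (a presheaf topos on the finite simplicial sets), note that $\Ex[\infty]$ commutes with the relevant finite-limit- and colimit-preserving functors, and conclude componentwise from \autoref{lem:ssets:Ex-infty.is.weakly.homotopy.equivalent}. The only cosmetic difference is that you justify the componentwise reduction via the remark identifying internal weak homotopy equivalences in a presheaf topos with componentwise ones, where the paper phrases it as the classifying topos having enough points --- the same reduction in substance.
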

\begin{proof} \exerproof
Since inverse image functors preserve Kan weak equivalences, it suffices to prove the claim for the universal simplicial object in the classifying topos for simplicial objects. 

Let $\mathcal{K}$ be a small skeleton of the category of finite simplicial sets and let $\mathcal{B}_\mathrm{ss} = \Func{\mathcal{K}}{\cat{\Set}}$. It can be shown that $\mathcal{B}_\mathrm{ss}$ is the classifying topos for simplicial objects. Clearly, $\mathcal{B}_\mathrm{ss}$ is a topos with enough points, so the claim can be reduced to the case where $\mathcal{S} = \cat{\Set}$, which is \autoref{lem:ssets:Ex-infty.is.weakly.homotopy.equivalent}.
\end{proof}

\begin{cor}
\label{cor:hotoposes:Ex-infty.in.sigma.pretoposes}
For any simplicial object $X$ is a $\sigma$-pretopos $\mathcal{S}$, the canonical embedding $i^\infty_X : X \to \Ex[\infty]{X}$ is a Kan weak equivalence.
\end{cor}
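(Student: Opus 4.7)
The plan is to reduce to the Grothendieck topos case already handled in \autoref{prop:hotoposes:Ex-infty.is.internally.weakly.homotopy.equivalent}, by embedding $\mathcal{S}$ into a Grothendieck topos along a conservative functor that preserves enough structure for $\Ex[\infty]$ and Dugger--Isaksen weak equivalences to transfer along it.

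First I would, without loss of generality, replace $\mathcal{S}$ by a small full sub-$\sigma$-pretopos containing $X$, obtained by closing $\set{X}$ under finite limits, regular image factorisations, countable coproducts, quotients of internal equivalence relations, and $\omega$-colimits. Then, as indicated in \autoref{rem:hotoposes:sigma.pretoposes.and.countable.colimits}, I would embed this small $\sigma$-pretopos into the topos $\mathcal{E}$ of sheaves for its $\sigma$-coherent topology, obtaining a conservative $\sigma$-coherent functor $F : \mathcal{S} \to \mathcal{E}$. Such an $F$ preserves finite limits, regular epimorphisms, and colimits for countable diagrams; being conservative and regular, it also reflects regular epimorphisms (via preservation of image factorisations). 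Consequently $F$ commutes (up to isomorphism) with $\Ex$, $\Ex[\infty]$, and the canonical embeddings $i^\infty$, and both preserves and reflects the properties \emph{internal Kan complex} and \emph{Dugger--Isaksen weak equivalence}.

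With those preservation/reflection properties in hand, the remainder is a short transfer. Applied to $F X$ in $\mathcal{E}$, \autoref{prop:hotoposes:Ex-infty.is.internally.weakly.homotopy.equivalent} gives that $i^\infty_{F X} : F X \to \Ex[\infty]{F X}$ is an internal weak homotopy equivalence; \autoref{lem:hotoposes:Raptis-Rosicky.weak.equivalences.are.Kan.weak.equivalences} then upgrades this to the statement that $\Ex[\infty]{i^\infty_{F X}}$ is a Dugger--Isaksen weak equivalence between internal Kan complexes in $\mathcal{E}$. Since $F$ carries $\Ex[\infty]{i^\infty_X}$ to a morphism isomorphic to $\Ex[\infty]{i^\infty_{F X}}$ and reflects Dugger--Isaksen weak equivalences, I conclude that $\Ex[\infty]{i^\infty_X}$ is a Dugger--Isaksen weak equivalence in $\mathcal{S}$, which by definition says that $i^\infty_X$ is a Kan weak equivalence.

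The main obstacle is bookkeeping at the start: one must check that the $\sigma$-coherent sheaf embedding is conservative and that a conservative $\sigma$-coherent functor genuinely preserves and reflects Dugger--Isaksen weak equivalences (the reflection step relies on conservative regular functors reflecting regular epimorphisms, through their commutation with image factorisations). Once this interface is set up, no further homotopy-theoretic work is needed beyond citing \autoref{prop:hotoposes:Ex-infty.is.internally.weakly.homotopy.equivalent} and \autoref{lem:hotoposes:Raptis-Rosicky.weak.equivalences.are.Kan.weak.equivalences}.
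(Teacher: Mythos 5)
Your proposal is correct and follows essentially the same route as the paper: cut down to a small $\sigma$-pretopos containing the relevant data, embed it via the (conservative, $\sigma$-coherent) sheaf embedding for the $\sigma$-coherent topology as in \autoref{rem:hotoposes:sigma.pretoposes.and.countable.colimits}, and thereby reduce to the Grothendieck topos case settled by \autoref{prop:hotoposes:Ex-infty.is.internally.weakly.homotopy.equivalent}. The paper leaves the transfer bookkeeping (commutation with $\Ex[\infty]$ and reflection of Dugger--Isaksen weak equivalences along a conservative regular functor) implicit, whereas you spell it out; the content is the same.
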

\begin{proof}
Since the claim only involves a small family of objects and morphisms in $\mathcal{S}$,  we can replace $\mathcal{S}$ with a small $\sigma$-pretopos; and if $\mathcal{S}$ is a small $\sigma$-pretopos, we can embed it in the topos of sheaves for the $\sigma$-coherent topology, as in  \autoref{rem:hotoposes:sigma.pretoposes.and.countable.colimits}. Thus, it suffices to prove the claim in the case where $\mathcal{S}$ is a Grothendieck topos.
\end{proof}

\begin{prop}
\label{prop:hotoposes:comparison.of.whe.of.internal.Kan.complexes}
\needspace{3.0\baselineskip}
Let $\mathcal{S}$ be a Grothendieck topos and let $f : X \to Y$ be a morphism in $\cat{\Kancx[\mathcal{S}]}$. The following are equivalent:
\begin{enumerate}[(i)]
\item $f : X \to Y$ is a Kan weak equivalence.

\item $f : X \to Y$ is a Dugger--Isaksen weak equivalence.

\item $f : X \to Y$ is an internal weak homotopy equivalence.
\end{enumerate}
\end{prop}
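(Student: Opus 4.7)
The equivalence (i) $\Leftrightarrow$ (ii) is immediate from \autoref{rem:hotoposes:new.and.old.internal.whe.of.Kan.complexes}, and (iii) $\Rightarrow$ (i) is \autoref{lem:hotoposes:Raptis-Rosicky.weak.equivalences.are.Kan.weak.equivalences}. The real content of the proposition is thus (ii) $\Rightarrow$ (iii): given a Dugger--Isaksen weak equivalence $f : X \to Y$ between internal Kan complexes in the Grothendieck topos $\mathcal{S}$, we must exhibit a geometric morphism $c : \mathcal{S} \to \mathcal{B}_\mathrm{whe}$ with $c^* u \cong f$.

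My plan is to construct an auxiliary classifying topos for Dugger--Isaksen weak equivalences between internal Kan complexes and compare it with $\mathcal{B}_\mathrm{whe}$. Let $\mathcal{W}_K \subseteq \mathcal{W}$ denote the full subcategory of weak homotopy equivalences between Kan complexes. Since Kan complexes are closed under filtered colimits in $\cat{\SSet}$ (the horn inclusions being finitely presentable), $\mathcal{W}_K$ inherits closure under filtered colimits from $\mathcal{W}$; a solution-set argument analogous to \autoref{thm:ssets:Raptis-Rosicky} should then show that $\mathcal{W}_K$ is itself accessible. Applying \autoref{thm:logic:theories.of.presheaf.type} produces a classifying presheaf topos $\mathcal{B}_K$, and the inclusion $\mathcal{W}_K \hookrightarrow \mathcal{W}$ induces a geometric morphism $\mathcal{B}_K \to \mathcal{B}_\mathrm{whe}$. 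Given $f$ satisfying (ii), classifying it via a geometric morphism $\mathcal{S} \to \mathcal{B}_K$ and post-composing with this comparison would yield the required $c$.

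The main obstacle---precisely the ``subtlety'' flagged in the introduction to \Sect 3---is verifying that geometric morphisms $\mathcal{S} \to \mathcal{B}_K$ genuinely correspond to Dugger--Isaksen weak equivalences between internal Kan complexes in $\mathcal{S}$, as opposed to some broader class of filtered-colimit models of $\mathcal{W}_K$ picked out by Makkai--Par\'e. This amounts to checking that both the internal Kan complex condition (regular epimorphisms onto the matching objects $\kwlim{\Lambda^n_k}{X}$) and the regular-epimorphism condition of \autoref{dfn:hotoposes:DI.whe} are correctly reproduced by the abstract construction applied to $\mathcal{W}_K$. Should the direct matching prove recalcitrant, a fallback route is to invoke the 2-out-of-3 property of internal weak homotopy equivalences, which is valid because $\mathcal{B}_\mathrm{whe}$ is a presheaf topos, combined with \autoref{prop:hotoposes:Ex-infty.is.internally.weakly.homotopy.equivalent}: this reduces the problem to showing that $\Ex^\infty f$ is an internal weak homotopy equivalence, a situation where both endpoints lie in the image of $\Ex^\infty$ and are therefore presented by finite weighted limits, rendering the comparison with $\mathcal{B}_\mathrm{whe}$ more tractable.
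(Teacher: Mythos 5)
Your handling of (i) $\Leftrightarrow$ (ii) and (iii) $\Rightarrow$ (i) coincides with the paper's, but the argument you sketch for the essential implication (ii) $\Rightarrow$ (iii) has a genuine gap, and indeed you name it yourself without closing it. To apply \autoref{thm:logic:theories.of.presheaf.type} to $\mathcal{W}_K$ you would need it to be \emph{finitely} accessible together with an identification of its finitely presentable objects; the Raptis--Rosick\'y solution set consists of weak equivalences between \emph{finite} simplicial sets, and finite simplicial sets are essentially never Kan complexes (a finite Kan complex has weakly contractible components), so no ``analogous'' solution-set argument lands inside $\mathcal{W}_K$, and the finitely presentable objects of $\mathcal{W}_K$ are certainly not weak equivalences of finite Kan complexes. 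More seriously, even granting a presheaf topos $\mathcal{B}_K$, the ``main obstacle'' you flag --- that geometric morphisms $\mathcal{S} \to \mathcal{B}_K$ correspond to Dugger--Isaksen weak equivalences of internal Kan complexes in $\mathcal{S}$ --- is needed in \emph{both} directions: without it you cannot even produce a geometric morphism $\mathcal{S} \to \mathcal{B}_K$ classifying the given $f$, so the composite $\mathcal{S} \to \mathcal{B}_K \to \mathcal{B}_\mathrm{whe}$ never gets started. This matching of abstractly classified models with the concretely defined internal notion is exactly the subtlety the paper spends the remainder of \Sect 3 resolving for $\mathcal{W}$ itself (\autoref{thm:hotoposes:comparison.of.whe.in.Grothendieck.toposes}); assuming it for $\mathcal{W}_K$ begs the question. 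The fallback is circular: by \autoref{prop:hotoposes:Ex-infty.is.internally.weakly.homotopy.equivalent} and 2-out-of-3 one reduces to showing that $\Ex[\infty]{f}$ is an internal weak homotopy equivalence, but $\Ex[\infty]{f}$ is again a Dugger--Isaksen weak equivalence of internal Kan complexes, i.e.\ another instance of (ii) $\Rightarrow$ (iii) (and $\Ex[\infty]$ is an $\omega$-colimit, not a finite weighted limit, so nothing becomes ``more tractable''); moreover, 2-out-of-3 for internal weak homotopy equivalences in an arbitrary Grothendieck topos is \autoref{prop:hotoposes:2-of-3.property.of.internal.whe}, which requires the separable-site machinery and is not available merely ``because $\mathcal{B}_\mathrm{whe}$ is a presheaf topos''.

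The paper's own proof of (ii) $\Rightarrow$ (iii) sidesteps all of this: the conditions ``$X$ and $Y$ are internal Kan complexes and $f$ is a Dugger--Isaksen weak equivalence'' are regular-epimorphism conditions on finite weighted limits, so the classifying topos $\mathcal{B}_\mathrm{DI}$ for such data is a subtopos of the classifier of morphisms of simplicial objects for a Grothendieck topology generated by singleton covers; Deligne's theorem then gives $\mathcal{B}_\mathrm{DI}$ enough points, and since both properties in question are preserved and jointly reflected by the inverse images of points, the generic case reduces to $\mathcal{S} = \cat{\Set}$, where the statement is \autoref{thm:hotoposes:Dugger-Isaksen.weak.equivalences}. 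You could rescue your idea along these lines, but as written the proposal does not constitute a proof.
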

\begin{proof}
(i) \iff (ii). This is \autoref{rem:hotoposes:new.and.old.internal.whe.of.Kan.complexes}.

\bigskip\noindent
(ii) \implies (iii). Consider the classifying topos $\mathcal{B}_\mathrm{DI}$ for Dugger--Isaksen weak equivalences of Kan complexes. It is a subtopos of the classifying topos for morphisms of simplicial objects, and the corresponding Grothendieck topology is generated by singletons. Thus, by Deligne's theorem on coherent toposes,\footnote{See Proposition 9.0 in \citep[Exposé VI]{SGA4b}, Corollary 3 in \citep[\Chap IX, \Sect 11]{MLM}, or Proposition 3.3.13 in \citep[Part D]{Johnstone:2002b}.} $\mathcal{B}_\mathrm{DI}$ is a topos with enough points. We can therefore reduce the claim to the case where $\mathcal{S} = \cat{\Set}$, where it is known that Dugger--Isaksen weak equivalences are weak homotopy equivalences in the classical sense.

\bigskip\noindent
(iii) \implies (i). See \autoref{lem:hotoposes:Raptis-Rosicky.weak.equivalences.are.Kan.weak.equivalences}.
\end{proof}

Next, we will show that the class of internal weak homotopy equivalences coincides with the class of Kan weak equivalences; for this we will need some technical results from topos theory.

\begin{lem}
\label{lem:logic:products.of.presheaf.toposes}
Let $\mathcal{C}$ and $\mathcal{D}$ be small categories. The bicategorical product of $\Func{\op{\mathcal{C}}}{\cat{\Set}}$ and $\Func{\op{\mathcal{D}}}{\cat{\Set}}$ in the 2-category of Grothendieck toposes is canonically equivalent to $\Func{\op{\mathcal{C}} \times \op{\mathcal{D}}}{\cat{\Set}}$.
\end{lem}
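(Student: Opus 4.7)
The plan is to verify the defining universal property of the bicategorical product directly, using Diaconescu's theorem: for a small category $\mathcal{K}$ and a Grothendieck topos $\mathcal{E}$, geometric morphisms $\mathcal{E} \to \Func{\op{\mathcal{K}}}{\cat{\Set}}$ correspond to flat (filtering) functors $\mathcal{K} \to \mathcal{E}$, via $f \mapsto f^* \circ y$, where $y$ is the Yoneda embedding.

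The categorical projections out of $\mathcal{C} \times \mathcal{D}$ induce, by precomposition, essential geometric morphisms $p_1, p_2$ from $\Func{\op{\mathcal{C}} \times \op{\mathcal{D}}}{\cat{\Set}}$ to each of the two factor toposes, hence a comparison functor into the bicategorical product. Via Diaconescu, verifying that this comparison is an equivalence reduces to exhibiting an equivalence between the category of flat functors $\mathcal{C} \times \mathcal{D} \to \mathcal{E}$ and the product of the categories of flat functors $\mathcal{C} \to \mathcal{E}$ and $\mathcal{D} \to \mathcal{E}$, natural in $\mathcal{E}$.

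The forward assignment is determined by the projections: using the identification $p_1^* y_\mathcal{C}(c) \cong y_\mathcal{C}(c) \boxtimes 1_\mathcal{D} = \indlim_d y_{\mathcal{C} \times \mathcal{D}}(c, d)$ (and symmetrically for $p_2$), it sends $H : \mathcal{C} \times \mathcal{D} \to \mathcal{E}$ to the pair $(F, G)$ with $F(c) = \indlim_d H(c, d)$ and $G(d) = \indlim_c H(c, d)$. The candidate inverse is the assignment $(F, G) \mapsto F \boxtimes G$, where $(F \boxtimes G)(c, d) := F(c) \times G(d)$.

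The main obstacle is showing that $F \boxtimes G$ is flat whenever $F$ and $G$ are. I would handle this by constructing the cocontinuous extension $(F \boxtimes G)_! : \Func{\op{\mathcal{C}} \times \op{\mathcal{D}}}{\cat{\Set}} \to \mathcal{E}$ characterised by $(F \boxtimes G)_!(P \boxtimes Q) = F_!(P) \times G_!(Q)$ on external products of presheaves (extended by colimits, using that external products of representables are dense in $\Func{\op{\mathcal{C}} \times \op{\mathcal{D}}}{\cat{\Set}}$). Preservation of finite limits by $(F \boxtimes G)_!$ then reduces to the pointwise distributivity $(P \boxtimes Q) \times (P' \boxtimes Q') = (P \times P') \boxtimes (Q \times Q')$ together with the separate flatness of $F$ and $G$.

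It then remains to check that the two assignments are mutually inverse, which is a short computation: for $F \boxtimes G$, the flatness of $G$ gives $\indlim_d G(d) = G_!(1_\mathcal{D}) = 1$, hence $\indlim_d F(c) \times G(d) = F(c)$; conversely, for flat $H$, the identity $y_{\mathcal{C} \times \mathcal{D}}(c, d) = (y_\mathcal{C}(c) \boxtimes 1_\mathcal{D}) \times (1_\mathcal{C} \boxtimes y_\mathcal{D}(d))$ combined with left-exactness of $H_!$ recovers $H(c, d) = F(c) \times G(d)$. Naturality in $\mathcal{E}$ is then immediate from functoriality of the constructions.
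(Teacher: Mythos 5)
The paper offers no argument for this lemma at all --- it simply cites Exercise 14 of Mac Lane--Moerdijk, Ch.~VII, and Corollary 3.2.13 of Johnstone, Part~B --- so you are supplying a proof where the paper has only a reference, and your route is the standard one behind those references: use Diaconescu's theorem to identify geometric morphisms $\mathcal{E} \to \Func{\op{\mathcal{K}}}{\cat{\Set}}$ with flat functors $\mathcal{K} \to \mathcal{E}$, and then exhibit an equivalence between flat functors $\mathcal{C} \times \mathcal{D} \to \mathcal{E}$ and pairs of flat functors, naturally in $\mathcal{E}$. Your identification of the projections, the formula $F(c) = \indlim_d H(c,d)$ for the composite with a projection, the use of $\indlim_d y_{\mathcal{D}}(d) \cong 1$, and the two mutually-inverse computations are all correct.

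The one step that is not adequately justified as written is precisely the one you flag as the main obstacle: flatness of $F \boxtimes G$. Knowing the values of the cocontinuous extension $(F \boxtimes G)_!$ on external products $P \boxtimes Q$ (equivalently on the representables, which are dense) does not let you ``reduce'' finite-limit preservation to a pointwise distributivity statement: preservation of finite limits by a cocontinuous functor cannot be tested on a dense subcategory, since a general equalizer or pullback of presheaves on $\mathcal{C} \times \mathcal{D}$ does not decompose into external products --- indeed, ``$(F\boxtimes G)_!$ is left exact'' is by definition the flatness you are trying to prove, so the asserted reduction gives no mechanism. Two standard repairs: (a) verify directly that $F \boxtimes G$ is filtering --- the three conditions (a jointly epimorphic family over $1$, over binary products $F(c) \times G(d) \times F(c') \times G(d')$, and onto the equalizer of each parallel pair) hold because spans and parallel pairs in $\mathcal{C} \times \mathcal{D}$ are just pairs of spans and parallel pairs, the relevant products and equalizers in $\mathcal{E}$ decompose componentwise, and jointly epimorphic families in a Grothendieck topos are stable under pullback and closed under composition, so a product of two epimorphic families is epimorphic; or (b) factor $(F \boxtimes G)_!$ through $\Func{\op{\mathcal{C}}}{\mathcal{E}}$ as postcomposition with $G_!$ (left exact, since limits in functor categories are computed pointwise) followed by the $\mathcal{E}$-valued tensor with the flat functor $F$, whose left exactness is the internal form of Diaconescu's theorem. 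With either argument substituted for the asserted reduction, your proof is complete.
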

\begin{proof} \openproof
See Exercise 14 in \citep[\Chap VII]{MLM} or Corollary 3.2.13 in \citep[Part B]{Johnstone:2002a}.
\end{proof}

\begin{lem}
\label{lem:logic:topologies.of.presheaf.type}
Let $\mathcal{C}$ and $\mathcal{D}$ be small categories. If $F : \mathcal{C} \to \mathcal{D}$ is a fully faithful functor, then:
\begin{enumerate}[(i)]
\item The geometric morphism $\Func{\op{\mathcal{C}}}{\cat{\Set}} \to \Func{\op{\mathcal{D}}}{\cat{\Set}}$ covariantly induced by $F$ is a geometric inclusion.

\item The corresponding Grothendieck topology on $\mathcal{D}$ has the following property: there is a unique minimal covering sieve on each object in $\mathcal{D}$.
\end{enumerate}
\end{lem}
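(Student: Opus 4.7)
The plan is to treat the two parts separately: (i) reduces to a Yoneda-style computation with right Kan extensions, and (ii) to unwinding the standard correspondence between left-exact-reflective subcategories of a presheaf topos and Grothendieck topologies on the indexing category.

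For (i), the covariantly induced geometric morphism has inverse image $F^* : \Func{\op{\mathcal{D}}}{\cat{\Set}} \to \Func{\op{\mathcal{C}}}{\cat{\Set}}$ given by precomposition with $\op{F}$, and direct image $F_*$ given by right Kan extension along $\op{F}$; the functor $F^*$ is automatically left exact because limits in presheaf categories are computed pointwise. I will verify that this is a geometric inclusion by showing that $F_*$ is fully faithful, equivalently that the counit $F^* F_* \Rightarrow \id$ is an isomorphism. For a presheaf $P$ on $\mathcal{C}$ and an object $c \in \mathcal{C}$, the defining universal property of right Kan extension yields a natural isomorphism
\[
(F^* F_* P)(c) = (F_* P)(Fc) \cong \Hom[\Func{\op{\mathcal{C}}}{\cat{\Set}}]{R_c}{P},
\]
where $R_c$ is the presheaf on $\mathcal{C}$ sending $c'$ to $\Hom[\mathcal{D}]{Fc'}{Fc}$. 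By full faithfulness of $F$ this equals $\Hom[\mathcal{C}]{c'}{c}$, so $R_c$ is the representable presheaf at $c$ and Yoneda identifies the displayed set with $P(c)$. A short check confirms that the resulting isomorphism is the counit.

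For (ii), because $F_*$ is fully faithful and $F^*$ is left exact, the adjunction $F^* \dashv F_*$ exhibits $\Func{\op{\mathcal{C}}}{\cat{\Set}}$ as a left-exact-reflective subcategory of $\Func{\op{\mathcal{D}}}{\cat{\Set}}$; hence it corresponds to a Grothendieck topology $J$ on $\mathcal{D}$ for which the sheafification functor is $F_* F^*$. A sieve $S$ on $d \in \mathcal{D}$, regarded as a subpresheaf $S \embedinto \Hom[\mathcal{D}]{\pblank}{d}$, is $J$-covering if and only if the monomorphism $F^* S \to F^* \Hom[\mathcal{D}]{\pblank}{d}$ is an isomorphism; evaluating at $c \in \mathcal{C}$, this condition says that every morphism $Fc \to d$ belongs to $S$. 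Consequently, the sieve $M_d$ on $d$ generated by all morphisms of the form $Fc \to d$ with $c \in \ob \mathcal{C}$ is contained in every $J$-covering sieve on $d$ and is itself $J$-covering; this is the promised unique minimal covering sieve on $d$.

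I do not anticipate any real obstacle: both parts are essentially unwindings of the relevant definitions combined with Yoneda. The only point that deserves explicit justification is the equivalence ``$F^* \alpha$ is an isomorphism'' $\iff$ ``$\alpha$ is inverted by sheafification'' for a monomorphism $\alpha$, which is a general feature of left-exact reflections and is precisely what allows the characterisation of covering sieves to be read off directly from $F^*$.
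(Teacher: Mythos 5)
Your proposal is correct and follows the same route as the paper, which simply cites the two facts you prove: for (i) the paper invokes the standard result that the counit of pointwise right Kan extension along a fully faithful functor is an isomorphism (so $F_*$ is fully faithful), which is exactly the Yoneda computation you carry out, and for (ii) it refers to Johnstone's remarks, whose content is precisely your identification of the covering sieves as those containing every morphism $F c \to d$, with smallest covering sieve the one generated by all such morphisms. The only point needing care, the equivalence between a sieve being covering and its inclusion being inverted by the left exact reflector, is the standard characterisation of the topology attached to a subtopos, as you note.
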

\begin{proof} \openproof
(i). The counit of pointwise right Kan extension along a fully faithful functor is a natural isomorphism: see Corollary 3 in \citep[\Chap X, \Sect 3]{CWM}. In particular, the direct image functor $\Func{\op{\mathcal{C}}}{\cat{\Set}} \to \Func{\op{\mathcal{D}}}{\cat{\Set}}$ induced by $F$ is fully faithful.

\bigskip\noindent
(ii). See the remarks after Definition 2.2.18 in \citep[Part C]{Johnstone:2002b}.
\end{proof}

\begin{thm}[Makkai--Reyes]
\label{thm:logic:Makkai-Reyes.completeness}
If $\tuple{\mathcal{C}, J}$ is a separable site, \ie $\mathcal{C}$ is a countable category with finite limits and $J$ is a Grothendieck topology generated by countably many sieves, then the topos $\cat{\Sh{\mathcal{C}}{J}}$ has enough points.
\end{thm}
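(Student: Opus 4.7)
The plan is to construct a conservative family of points of $\cat{\Sh{\mathcal{C}}{J}}$ by a countable forcing argument, in the spirit of the Henkin--Rasiowa--Sikorski proof of completeness for countable first-order theories.

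First I would make the reduction: a Grothendieck topos has enough points if and only if, for every non-initial sheaf $F$ and every section $x \in F(c_0)$, there exists a point $p$ together with an element of $p^* F$ lifting $x$. By Diaconescu's theorem, a point of $\cat{\Sh{\mathcal{C}}{J}}$ is the same as a flat $J$-continuous functor $p \colon \mathcal{C} \to \cat{\Set}$, so the task becomes building such a functor together with a designated element.

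Next I would introduce a poset $\mathbb{P}$ of \emph{finite approximations}: a condition consists of a finite subdiagram $D$ of $\mathcal{C}$, a finite set of marked elements indexed over the objects of $D$ with one of them pinned over $(c_0, x)$, and compatibility data for the morphisms in $D$. The poset is ordered by extension. For each of the following requirements the corresponding subset of $\mathbb{P}$ is dense: (a) the three cofiltering conditions characterising flatness (inhabitation of $p(c)$ for each mentioned $c$, connectedness of pairs, equalisation of parallel arrows); and (b) for each generating covering sieve in $J$ and each element lying over its apex, the stipulation that the element factors through some arrow in the sieve. Because $\mathcal{C}$ is countable and $J$ is generated by countably many sieves, there are only countably many such density conditions in total, so the Rasiowa--Sikorski lemma (the constructive form of the Baire category theorem) produces a filter meeting all of them. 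The filter assembles into a flat $J$-continuous $p$ together with an element of $p^* F$ above $x$; varying $(F, c_0, x)$ yields a conservative family of points.

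The main obstacle will be the combinatorial bookkeeping of $\mathbb{P}$: one must verify that each cofiltering axiom and each generating covering genuinely gives rise to a dense set (so that any finite approximation can be extended to satisfy the condition), and that continuity with respect to the countable generating family automatically upgrades to continuity for the whole topology $J$. The latter is standard given that $J$ is the smallest topology containing the generators, while the former is where the assumption that $\mathcal{C}$ has finite limits plays its role, ensuring that the equalisation and connectedness requirements can always be resolved inside $\mathcal{C}$.
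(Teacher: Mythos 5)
The paper does not prove this statement at all; it is quoted with a reference to Theorem 6.2.4 of \citep{Makkai-Reyes:1977} (see also C2.2.12 in \citep[Part C]{Johnstone:2002b}), so your attempt has to be judged against the standard argument, which is indeed a Rasiowa--Sikorski-style construction of points as flat $J$-continuous functors. Your overall strategy is therefore the right one, but there is a genuine gap at the very first step: the reduction you propose is not equivalent to having enough points. ``Enough points'' means the stalk functors are \emph{jointly conservative}, which (since inverse images are exact and toposes are balanced) comes down to separating proper subsheaves: if $A \subsetneq B$ and $x \in B(c_0) \setminus A(c_0)$, one needs a point $p$ and an element $\xi \in p(c_0)$ whose associated germ of $x$ does \emph{not} lie in $p^* A$. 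Equivalently: for every object $c_0$ and every sieve $S$ on $c_0$ that is \emph{not} $J$-covering (here $S = \{ f : c \to c_0 \mid f^* x \in A(c) \}$, which is non-covering precisely because $A$ is a subsheaf and $x \notin A(c_0)$), there must exist a flat continuous $p$ and $\xi \in p(c_0)$ that factors through no arrow of $S$. Your criterion --- ``every section of a non-initial sheaf has a germ at some point'' --- only asserts that $p(c_0)$ can be made inhabited; the germ of $x$ at such a $\xi$ may perfectly well land in $p^* A$, so producing it separates nothing.

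Correspondingly, your forcing poset $\mathbb{P}$ is missing the one ingredient that carries the mathematical content: a negative constraint ``the distinguished element over $c_0$ is not forced to factor through $S$'', maintained as an invariant throughout the construction. Once that constraint is added, the density of your requirement (b) is no longer the routine bookkeeping you describe: when a marked element sits over the apex of a generating $J$-cover $R$, one must be able to factor it through \emph{some} arrow of $R$ without pushing the distinguished element into $S$, and this is exactly where the transitivity/local-character axiom of Grothendieck topologies is used --- if every arrow of $R$ forced a factorisation through $S$, then $S$ would itself be $J$-covering, contradicting the choice of $S$. (Your remarks about countability of the requirements, about flatness via the three filteredness conditions using finite limits in $\mathcal{C}$, and about continuity for generating covers upgrading to continuity for all of $J$, are all fine.) As written, however, the construction proves only that stalks are inhabited, which does not yield joint conservativity; fixing it requires both the corrected target statement and the non-covering invariant described above.
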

\begin{proof} \openproof
See Theorem 6.2.4 in \citep{Makkai-Reyes:1977}.
\end{proof}

\begin{lem}
\label{lem:logic:pullback.topologies}
Let $\tuple{\mathcal{D}, K}$ be a Grothendieck site, let $\mathcal{C}$ be a category, let $F : \mathcal{C} \to \mathcal{D}$ be a functor, and let $J$ be the smallest Grothendieck topology on $\mathcal{C}$ such that $F : \mathcal{C} \to \mathcal{D}$ reflects covers.\footnote{--- \ie if $V$ is a $K$-covering sieve on $F C$, then its preimage is a $J$-covering sieve on $C$.} Then there is a bicategorical pullback diagram of the form below in the 2-category of Grothendieck toposes,
\[
\begin{tikzcd}
\cat{\Sh{\mathcal{C}}{J}} \dar[hookrightarrow] \rar &
\cat{\Sh{\mathcal{D}}{K}} \dar[hookrightarrow] \\
\Func{\op{\mathcal{C}}}{\cat{\Set}} \rar &
\Func{\op{\mathcal{D}}}{\cat{\Set}}
\end{tikzcd}
\]
where $\Func{\op{\mathcal{C}}}{\cat{\Set}} \to \Func{\op{\mathcal{D}}}{\cat{\Set}}$ is the geometric morphism covariantly induced by $F : \mathcal{C} \to \mathcal{D}$.
\end{lem}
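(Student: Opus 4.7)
The plan is to use the standard correspondence between bicategorical pullbacks of subtoposes and pullbacks of local operators, and then to identify the resulting Grothendieck topology with $J$ via a Yoneda-density computation.

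First, I would invoke the general fact that the bicategorical pullback of a geometric inclusion along an arbitrary geometric morphism exists and is itself a geometric inclusion, classified by the pulled-back local operator. Applied to the inclusion $\Sh{\mathcal{D}}{K} \hookrightarrow \Psh{\mathcal{D}}$ and the covariantly induced geometric morphism $F : \Psh{\mathcal{C}} \to \Psh{\mathcal{D}}$ (whose inverse image $F^*$ is restriction along $\op{F}$), this produces a Grothendieck topology $J'$ on $\mathcal{C}$ such that the pullback equals $\Sh{\mathcal{C}}{J'} \hookrightarrow \Psh{\mathcal{C}}$; it remains to prove $J = J'$.

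For $J \subseteq J'$, I observe that the composite $\Sh{\mathcal{C}}{J'} \hookrightarrow \Psh{\mathcal{C}} \to \Psh{\mathcal{D}}$ factors through $\Sh{\mathcal{D}}{K}$, so for every $K$-covering sieve $V$ on $FC$ the monomorphism $F^*(V) \hookrightarrow F^*(h_{FC})$ in $\Psh{\mathcal{C}}$ must be $J'$-dense. The canonical morphism $h_C \to F^*(h_{FC})$ corresponding to $\id_{FC}$ under the adjunction $F_! \dashv F^*$ pulls this inclusion back to $F^{-1}(V) \hookrightarrow h_C$; since density is pullback-stable, $F^{-1}(V)$ is $J'$-covering, and minimality of $J$ then forces $J \subseteq J'$.

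For $J' \subseteq J$, I would exhibit $\Sh{\mathcal{C}}{J}$ as sitting in the given commutative square; the universal property of $\Sh{\mathcal{C}}{J'}$ then produces a geometric inclusion $\Sh{\mathcal{C}}{J} \hookrightarrow \Sh{\mathcal{C}}{J'}$ over $\Psh{\mathcal{C}}$, forcing $J' \subseteq J$. For the factorisation it suffices to show that $F^*(V) \hookrightarrow F^*(h_{FC})$ is $J$-dense for every $K$-covering $V$ on $FC$. A generalised element $h_{C'} \to F^*(h_{FC})$ corresponds under $F_! \dashv F^*$ to an arbitrary arrow $u : FC' \to FC$ in $\mathcal{D}$, and a direct computation identifies the pullback of $F^*(V) \hookrightarrow F^*(h_{FC})$ along it with $F^{-1}(u^* V) \hookrightarrow h_{C'}$, where $u^* V$ is the pullback sieve of $V$ along $u$ in $\mathcal{D}$. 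Pullback-stability of $K$ makes $u^* V$ a $K$-covering sieve on $FC'$, and the definition of $J$ then makes $F^{-1}(u^* V)$ a $J$-covering sieve on $C'$, as required.

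The main obstacle is this last density verification. Its crux is recognising that $J$-density must be tested against \emph{all} generalised elements of $F^*(h_{FC})$, corresponding to arbitrary arrows $FC' \to FC$ in $\mathcal{D}$ rather than only to arrows in the image of $F$; this is precisely where pullback-stability of $K$-covers becomes essential, since without it the preimage condition defining $J$ would not suffice to make $F^*$ send $K$-sheaves to $J$-sheaves.
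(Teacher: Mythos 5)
Your argument is correct, and it is worth noting that the paper offers no argument of its own here: its proof is a pointer to the proof of Lemma 2.3.19 in Part C of Johnstone's \emph{Sketches of an Elephant}. What you have written is a self-contained proof along the standard lines of that reference: realise the bipullback of $\cat{\Sh{\mathcal{D}}{K}} \embedinto \Func{\op{\mathcal{D}}}{\cat{\Set}}$ along the induced geometric morphism as the subtopos of $\Func{\op{\mathcal{C}}}{\cat{\Set}}$ determined by the pulled-back local operator, and then identify the associated topology $J'$ with $J$ by testing density of sieve inclusions. Both comparison steps are sound: pulling $F^* V \embedinto F^* \Hom[\mathcal{D}]{\blank}{F C}$ back along the canonical morphism $\Hom[\mathcal{C}]{\blank}{C} \to F^* \Hom[\mathcal{D}]{\blank}{F C}$ corresponding to $\id_{F C}$ does yield $F^{-1}(V) \embedinto \Hom[\mathcal{C}]{\blank}{C}$, giving $J \subseteq J'$ by minimality, and the generalised-element computation gives the factorisation needed for $J' \subseteq J$. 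The one piece of wording to tighten is the factorisation criterion: for the composite $\cat{\Sh{\mathcal{C}}{J}} \embedinto \Func{\op{\mathcal{C}}}{\cat{\Set}} \to \Func{\op{\mathcal{D}}}{\cat{\Set}}$ to factor through $\cat{\Sh{\mathcal{D}}{K}}$ you must show that $F^* V \embedinto F^* \Hom[\mathcal{D}]{\blank}{D}$ is $J$-dense for $K$-covering sieves $V$ on \emph{every} object $D$ of $\mathcal{D}$, not only on objects of the form $F C$. This costs nothing, because your computation nowhere uses that the target of $u$ lies in the image of $F$: a generalised element of $F^* \Hom[\mathcal{D}]{\blank}{D}$ at stage $C'$ is an arbitrary arrow $u : F C' \to D$, pullback-stability of $K$ makes $u^* V$ a covering sieve on the image object $F C'$, and the defining property of $J$ then makes $F^{-1}(u^* V)$ a $J$-covering sieve, exactly as in your final paragraph; so the gap is purely one of phrasing, not of substance.
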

\begin{proof} \openproof
See the proof of Lemma 2.3.19 in \citep[Part C]{Johnstone:2002b}.
\end{proof}
%
%
%

\begin{lem}
\label{lem:logic:fibrations.of.sites}
Let $\tuple{\mathcal{C}, J}$ and $\tuple{\mathcal{D}, K}$ be Grothendieck sites. Given an adjunction of the form below,
\[
F \dashv G : \mathcal{D} \to \mathcal{C}
\] 
the left adjoint $F : \mathcal{C} \to \mathcal{D}$ reflects covers if and only if the right adjoint $G : \mathcal{D} \to \mathcal{C}$ preserves covers (\ie $G : \tuple{\mathcal{D}, K} \to \tuple{\mathcal{C}, J}$ is a morphism of sites).
\end{lem}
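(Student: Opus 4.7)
The plan is to pass covering sieves back and forth across the adjunction using the unit $\eta : \id_{\mathcal{C}} \hoto GF$ and counit $\epsilon : FG \hoto \id_{\mathcal{D}}$. The two key identities I will exploit are the adjunction transpose formulas: for $h : C \to GD$ the transpose is $\tilde{h} = \epsilon_D \circ Fh : FC \to D$, and for $g : FC \to D$ the transpose is $Gg \circ \eta_C : C \to GD$. In both directions the strategy is the same: start with a given covering sieve, transport it through the unit or counit, apply the hypothesis, and then observe that the resulting covering sieve is contained in the one we want to prove covering (so that it is covering by the stability of the Grothendieck topology under enlargement).

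First I would handle ``$G$ preserves covers $\implies$ $F$ reflects covers''. Let $V$ be a $K$-covering sieve on $FC$, and form the sieve $\bar{V}$ on $GFC$ generated by $\set{Gg : GD' \to GFC}{g \in V}$; by hypothesis $\bar{V}$ is $J$-covering. Pulling back along $\eta_C : C \to GFC$ gives a $J$-covering sieve on $C$, which I claim is contained in $F^{-1}(V)$. Indeed, if $h : C' \to C$ lies in the pullback, then $\eta_C \circ h = Gg \circ k$ for some $g : D' \to FC$ in $V$ and some $k : C' \to GD'$; transposing across the adjunction yields $Fh = g \circ \tilde{k}$, which lies in $V$ because $V$ is a sieve and $g \in V$. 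Hence $F^{-1}(V)$ contains a covering sieve, and so is itself $J$-covering.

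For the converse direction, let $V$ be a $K$-covering sieve on $D$ and consider its pullback $\epsilon_D^*(V)$ along $\epsilon_D : FGD \to D$, which is again $K$-covering. By the hypothesis that $F$ reflects covers, the preimage $F^{-1}(\epsilon_D^*(V))$ is a $J$-covering sieve on $GD$. Unwinding the definitions, a morphism $h : C \to GD$ lies in this preimage precisely when $\epsilon_D \circ Fh \in V$, \ie when the transpose $\tilde{h}$ lies in $V$. For such $h$ we have $h = G\tilde{h} \circ \eta_C$, and since $\tilde{h} \in V$ forces $G\tilde{h}$ to be one of the generators of the sieve $\bar{V}$ on $GD$ generated by $G(V)$, we conclude $h \in \bar{V}$. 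Thus $F^{-1}(\epsilon_D^*(V)) \subseteq \bar{V}$, so $\bar{V}$ is $J$-covering, \ie $G$ sends the cover $V$ to a cover.

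There is no real conceptual obstacle here; the proof is essentially formal. The only point requiring care is the bookkeeping of adjoint transposes and the observation that in each case the containment of sieves flows in the right direction so that one can conclude using the axiom that a sieve containing a covering sieve is itself covering.
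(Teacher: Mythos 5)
Your proof is correct. The paper gives no argument of its own here—it simply defers to the cited reference (Johnstone, \emph{Sketches of an Elephant}, Lemma C2.5.1)—and your argument is exactly the standard proof found there: in each direction you transport the sieve through the unit or counit, and your two transpose computations ($Fh = g \circ \tilde{k}$ via the triangle identity and naturality of $\epsilon$, and $h = G\tilde{h} \circ \eta_C$ via the other triangle identity) are verified correctly, with the conclusion following from pullback-stability and the fact that a sieve containing a covering sieve is covering.
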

\begin{proof} \openproof
See Lemma 2.5.1 in \citep[Part C]{Johnstone:2002a}.
\end{proof}

\begin{cor}
\label{cor:logic:separability.of.pullbacks}
Let $\tuple{\mathcal{D}, K}$ be a Grothendieck site, let $\mathcal{C}$ be a category, let $F : \mathcal{C} \to \mathcal{D}$ be a functor, and let $J$ be the smallest Grothendieck topology on $\mathcal{C}$ such that $F : \mathcal{C} \to \mathcal{D}$ reflects covers. Assume the following hypotheses:
\begin{itemize}
\item $\mathcal{C}$ is a countable category with finite limits.

\item $\tuple{\mathcal{D}, K}$ is a separable site.

\item $F : \mathcal{C} \to \mathcal{D}$ has a right adjoint, say $G : \mathcal{D} \to \mathcal{C}$.
\end{itemize}
Then $\tuple{\mathcal{C}, J}$ is also a separable site.
\end{cor}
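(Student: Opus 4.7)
The plan is to exhibit an explicit countable family of generating sieves for $J$; the other conditions for separability, namely countability of $\mathcal{C}$ and existence of finite limits, are hypothesised outright.

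Fix a countable generating family $\mathcal{G}_K = \{V_\alpha\}_{\alpha \in A}$ for $K$, with $V_\alpha$ a sieve on some $D_\alpha \in \mathcal{D}$. The proposed generating family for $J$ is
\[
\mathcal{G} = \set{F^{-1}(h^* V_\alpha)}{\alpha \in A,\ C \in \ob \mathcal{C},\ h \in \Hom[\mathcal{D}]{FC}{D_\alpha}}.
\]
Its countability is immediate: via the adjunction $F \dashv G$ of \autoref{lem:logic:fibrations.of.sites}, $\Hom[\mathcal{D}]{FC}{D_\alpha} \cong \Hom[\mathcal{C}]{C}{GD_\alpha}$, which is countable since $\mathcal{C}$ is; and $A$ and $\ob \mathcal{C}$ are countable by hypothesis.

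The main work is to show $\mathcal{G}$ generates $J$. Let $J^*$ be the Grothendieck topology on $\mathcal{C}$ generated by $\mathcal{G}$; clearly $J^* \subseteq J$ since every sieve in $\mathcal{G}$ is the $F$-preimage of a $K$-cover. For the converse, by minimality of $J$ it suffices to check that $F^{-1}(V) \in J^*$ whenever $V$ is a $K$-cover on some $FC$. The plan is to introduce the class
\[
\tilde K = \set{V \text{ sieve on some } D \in \mathcal{D}}{F^{-1}(h^* V) \in J^* \text{ for all } C \in \ob \mathcal{C},\ h : FC \to D},
\]
show that it is a Grothendieck topology on $\mathcal{D}$ containing $\mathcal{G}_K$, and conclude $K \subseteq \tilde K$ by minimality of $K$. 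Containment of $\mathcal{G}_K$ in $\tilde K$ and the maximality and stability axioms for $\tilde K$ are straightforward from the construction, since $F^{-1}$ commutes with pullback of sieves and sends the maximal sieve to the maximal sieve.

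The main obstacle is verifying the transitivity axiom for $\tilde K$. Suppose $V \in \tilde K$ on $D$ and $W$ is a sieve on $D$ with $g^* W \in \tilde K$ for every $g \in V$; the goal is $F^{-1}(h^* W) \in J^*$ for each $h : FC \to D$. By hypothesis $F^{-1}(h^* V)$ is a $J^*$-cover of $C$, and each $f : C' \to C$ therein satisfies $h F f \in V$, so $(h F f)^* W \in \tilde K$ and hence $F^{-1}((h F f)^* W) \in J^*$. An elementary chase of definitions gives the identity $F^{-1}((h F f)^* W) = f^* F^{-1}(h^* W)$, so $f^* F^{-1}(h^* W) \in J^*$ for each $f$ in the $J^*$-cover $F^{-1}(h^* V)$; the transitivity axiom for $J^*$ then delivers $F^{-1}(h^* W) \in J^*$, as required.
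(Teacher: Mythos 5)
Your proof is correct, but it runs along a genuinely different track from the paper's. The paper works on the $G$-side: using \autoref{lem:logic:fibrations.of.sites} it recasts $J$ as the smallest topology making $G$ preserve covers, takes as generators the (countably many) sieves generated by the $G$-images of a countable generating family for $K$, and then shows by structural induction over the closure operations generating $K$ (pullback, multi-composition, upward inclusion) that $G$ is already a morphism of sites into the candidate topology --- the key point being that $G$, as a right adjoint, preserves pullbacks, so images of pullback sieves agree with pullbacks of image sieves. You instead stay entirely on the $F$-side: your generating family consists of the preimages $F^{-1}(h^* V_\alpha)$ for all $h : FC \to D_\alpha$, and you verify that the auxiliary class $\tilde K$ is a Grothendieck topology containing the generators of $K$, so that minimality of $K$ and then of $J$ close the argument; this verification plays the role of the paper's structural induction and is, if anything, cleaner. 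A noteworthy consequence of your route is that the adjunction enters only through the countability of $\Hom_{\mathcal{D}}(FC, D_\alpha) \cong \Hom_{\mathcal{C}}(C, G D_\alpha)$ --- and since separability of $\tuple{\mathcal{D}, K}$ already makes $\mathcal{D}$ countable, even that use is dispensable --- so your argument proves the statement under weaker hypotheses, at the cost of a larger (though still countable) generating family; the paper's proof uses the right adjoint more substantially but produces generators indexed only by the generating sieves of $K$.
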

\begin{proof}
By hypothesis, $K$ is generated by a countable collection of sieves in $\mathcal{D}$, say $K_0$. Let $J_0$ be the collection of sieves in $\mathcal{C}$ that are generated by the image of some sieve that are in $K_0$ and let $J_1$ be the Grothendieck topology generated by $J_0$. By \autoref{lem:logic:fibrations.of.sites}, $J$ is the smallest Grothendieck topology on $\mathcal{C}$ such that $G : \mathcal{D} \to \mathcal{C}$ preserves covers; thus, every sieve that is in $J_0$ is $J$-covering, and therefore $J_1 \subseteq J$. We will show (by structural induction) that $G : \tuple{\mathcal{D}, K} \to \tuple{\mathcal{C}, J_1}$ is a morphism of sites; it then follows that $J_1 = J$. 

Suppose $V$ is a $K$-covering sieve on $D$ and $g : D' \to D$ is a morphism in $\mathcal{D}$. Then the pullback sieve $g^* V$ is a $K$-covering sieve on $D'$. Moreover, $\mathcal{D}$ has pullbacks and $G : \mathcal{D} \to \mathcal{C}$ preserves pullbacks, so the sieve on $G D'$ generated by the image of $g^* V$ is also the pullback along $G g : G D' \to G D$ of the sieve generated by the image of $V$. Thus, the image of $g^* V$ generates a $J_1$-covering sieve if the image of $V$ generates a $J_1$-covering sieve.

It is clear that $G : \mathcal{D} \to \mathcal{C}$ preserves multi-composition of sieves and is inclusion-preserving on sieves. Since $K$ is the closure of $K_0$ under pullback,  multi-composition, and upward inclusion, the above proves that the image of each $K$-covering sieve generates a $J_1$-covering sieve, so we are done.
\end{proof}

\begin{prop}
\label{prop:hotoposes:2-of-3.property.of.internal.whe}
The class of internal weak homotopy equivalences of simplicial objects in a Grothendieck topos $\mathcal{S}$ has the 2-out-of-3 property.
\end{prop}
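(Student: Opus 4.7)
The plan is to construct, for each of the three 2-out-of-3 configurations, a classifying topos for composable pairs of morphisms in $\mbfs \mathcal{S}$ in which the two ``known'' morphisms are internal weak homotopy equivalences; to show that this classifying topos has enough points; and then to transfer 2-out-of-3 point-by-point from the classical case of simplicial sets. I will describe the argument in detail for the case where $f : X \to Y$ and $g : Y \to Z$ are internal weak homotopy equivalences and we wish to conclude that $g \circ f$ is too; the other two configurations are handled analogously, using in each case the appropriate classifying topos for the pair (one-component-and-composite).

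For the case at hand, I would form $\mathcal{B}^{(2)}$ as the bicategorical pullback of $\mathcal{B}_{\mathrm{whe}}$ with itself over $\mathcal{B}_{\mathrm{ss}}$ along the two geometric morphisms sending an internal weak homotopy equivalence to its codomain and its domain, respectively; so $\mathcal{B}^{(2)}$ classifies composable pairs of internal weak homotopy equivalences. By \autoref{thm:ssets:Raptis-Rosicky} and \autoref{thm:logic:theories.of.presheaf.type}, $\mathcal{B}_{\mathrm{whe}}$ is a presheaf topos on an essentially countable small category, and then by \autoref{lem:logic:products.of.presheaf.toposes} so is the (unrestricted) product $\mathcal{B}_{\mathrm{whe}} \times \mathcal{B}_{\mathrm{whe}}$. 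Using \autoref{lem:logic:pullback.topologies} I would realise $\mathcal{B}^{(2)}$ as a sheaf topos on this countable underlying category; to verify that the resulting topology is generated by countably many sieves and hence that the site is separable, I would appeal to \autoref{lem:logic:fibrations.of.sites} and \autoref{cor:logic:separability.of.pullbacks}, noting that the codomain and domain functors from finite weak homotopy equivalences to finite simplicial sets admit right adjoints.

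By \autoref{thm:logic:Makkai-Reyes.completeness}, $\mathcal{B}^{(2)}$ then has enough points. The composable pair $(f, g)$ of internal weak homotopy equivalences in $\mbfs \mathcal{S}$ corresponds to a geometric morphism $c : \mathcal{S} \to \mathcal{B}^{(2)}$; at each point $p : \cat{\Set} \to \mathcal{B}^{(2)}$, the universal composable pair pulls back to a classical composable pair of weak homotopy equivalences of simplicial sets, whose composite is again a weak homotopy equivalence by classical 2-out-of-3. Enough points then promotes this pointwise fact to the existence of a geometric morphism $\mathcal{B}^{(2)} \to \mathcal{B}_{\mathrm{whe}}$ classifying the ``universal composite''; composing with $c$ gives a geometric morphism $\mathcal{S} \to \mathcal{B}_{\mathrm{whe}}$ classifying $g \circ f$, so that $g \circ f$ is an internal weak homotopy equivalence.

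The main obstacle will be the clean presentation of $\mathcal{B}^{(2)}$ (and its analogues for the other two configurations) as a sheaf topos on a separable site, in particular verifying the hypotheses of \autoref{cor:logic:separability.of.pullbacks} after the pullback construction. The promotion step (``pointwise a weak homotopy equivalence'' implies ``classified by $\mathcal{B}_{\mathrm{whe}}$'') is standard for toposes with enough points: a geometric morphism from a topos with enough points factors through a given subtopos exactly when every composite with a point does, and the composability condition is then automatic from the universal property of $\mathcal{B}_{\mathrm{whe}}$ established by \autoref{thm:logic:theories.of.presheaf.type}.
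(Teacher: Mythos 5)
Your overall strategy is the paper's: realise the classifying topos of each two-out-of-three configuration on a separable site, get enough points from \autoref{thm:logic:Makkai-Reyes.completeness}, transfer the classical 2-out-of-3 property pointwise, and promote the pointwise conclusion to a factorisation through the subtopos $\mathcal{B}_\mathrm{whe}$ (your last paragraph's ``promotion'' step is fine and is exactly what the paper's reduction to $\cat{\Set}$ amounts to). The genuine gap is in the middle step, where you propose to present $\mathcal{B}^{(2)} = \mathcal{B}_\mathrm{whe} \times_{\mathcal{B}_\mathrm{ss}} \mathcal{B}_\mathrm{whe}$ as a sheaf topos on the site underlying $\mathcal{B}_\mathrm{whe} \times \mathcal{B}_\mathrm{whe}$ via \autoref{lem:logic:pullback.topologies}. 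That lemma produces subtoposes of a presheaf topos by pulling back a \emph{topology} along a functor of site categories, but $\mathcal{B}^{(2)}$ is not a subtopos of $\mathcal{B}_\mathrm{whe} \times \mathcal{B}_\mathrm{whe}$ at all: composability is not a \emph{property} of a pair of weak equivalences but extra \emph{structure} (an isomorphism between the codomain of one and the domain of the other), and accordingly the forgetful functor from composable pairs of weak equivalences in a topos to arbitrary pairs is faithful but not full (a morphism of pairs has two unrelated components at the middle object). Since the models of a subtopos always form a full subcategory of the models of the ambient topos, no Grothendieck topology on the product site can present $\mathcal{B}^{(2)}$, so neither \autoref{lem:logic:pullback.topologies} nor \autoref{cor:logic:separability.of.pullbacks} applies in your configuration; the cospan $\mathcal{B}_\mathrm{whe} \to \mathcal{B}_\mathrm{ss} \leftarrow \mathcal{B}_\mathrm{whe}$ has no leg that is an inclusion, which is what those results need. (Your parenthetical is also inaccurate: of the two forgetful functors from weak equivalences of finite simplicial sets to finite simplicial sets, the codomain functor has a right adjoint $B \mapsto \id_B$, while the domain functor has only a \emph{left} adjoint of that form; and in any case these are not the adjoints the corollary asks for.)

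The repair is the paper's device: build composability into the site rather than imposing it by a pullback over $\mathcal{B}_\mathrm{ss}$. Take the ambient presheaf topos to be the classifying topos of commutative triangles (equivalently, composable pairs) of simplicial objects, namely presheaves on the opposite $\mathcal{C}$ of the category of commutative triangles of finite simplicial sets; let $F : \mathcal{C} \to \mathcal{D}$ be the functor to the site classifying (not necessarily composable) pairs of morphisms of simplicial objects, forgetting the composite, and pull back along $F$ the topology whose sheaves form $\mathcal{B}_\mathrm{whe} \times \mathcal{B}_\mathrm{whe}$ (lemmas \ref{lem:logic:products.of.presheaf.toposes} and \ref{lem:logic:topologies.of.presheaf.type}). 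Then \autoref{lem:logic:pullback.topologies} does identify the resulting sheaf topos with the classifying topos for composable pairs of weak homotopy equivalences (it is equivalent to your intended $\mathcal{B}^{(2)}$), $F$ has the adjoint required by \autoref{cor:logic:separability.of.pullbacks} (a Kan extension, available because finite simplicial sets have finite limits and colimits), and separability plus \autoref{thm:logic:Makkai-Reyes.completeness} give enough points. With that site in place, the rest of your argument, including the factorisation of the universal composite through $\mathcal{B}_\mathrm{whe}$ and the analogous treatment of the other two configurations, goes through as you describe.
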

\begin{proof} \exerproof
The proposition consists of three claims, and it suffices to verify each claim for the universal example in the appropriate classifying topos. Each of these classifying toposes admits a site of the form $\tuple{\mathcal{C}, J}$ where $\mathcal{C}$ is the \emph{opposite} of the category of commutative triangles in the category of finite simplicial sets and $J$ is a suitable Grothendieck topology.

For example, if we wished to show that the class of internal weak homotopy equivalences is closed under composition, $J$ would be the smallest Grothendieck topology on $\mathcal{C}$ such that the functor $F : \mathcal{C} \to \mathcal{D}$ reflects covers, where $\mathcal{D}$ is the category of pairs of morphisms of finite simplicial sets, $F : \mathcal{C} \to \mathcal{D}$ sends a commutative triangle to the underlying composable pair, and $\mathcal{D}$ is equipped with the Grothendieck topology $K$ such that $\cat{\Sh{\mathcal{D}}{K}}$ is the classifying topos for pairs of weak homotopy equivalences of simplicial objects; then \autoref{lem:logic:pullback.topologies} ensures that $\cat{\Sh{\mathcal{C}}{J}}$ is the classifying topos for composable pairs of weak homotopy equivalences of simplicial objects. Moreover, $F : \mathcal{C} \to \mathcal{D}$ has a right adjoint, so by lemmas~\ref{lem:logic:products.of.presheaf.toposes} and~\ref{lem:logic:topologies.of.presheaf.type} and \autoref{cor:logic:separability.of.pullbacks}, $\tuple{\mathcal{C}, J}$ is (equivalent to) a separable site. Thus, by \autoref{thm:logic:Makkai-Reyes.completeness}, the classifying toposes in question have enough points, and therefore the claims can be reduced to the case where $\mathcal{S} = \cat{\Set}$.
\end{proof}

At last, we arrive at the main theorem of this section:

\begin{thm}
\label{thm:hotoposes:comparison.of.whe.in.Grothendieck.toposes}
A morphism of simplicial objects in a Grothendieck topos $\mathcal{S}$ is an internal weak homotopy equivalence if and only if it is a Kan weak equivalence.
\end{thm}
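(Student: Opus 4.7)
The plan is to use the commutative naturality square for $i^\infty$ together with the 2-out-of-3 property. The forward implication ``internal weak homotopy equivalence $\Rightarrow$ Kan weak equivalence'' is already \autoref{lem:hotoposes:Raptis-Rosicky.weak.equivalences.are.Kan.weak.equivalences}, so only the converse requires work.

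For the converse, suppose $f : X \to Y$ is a Kan weak equivalence in $\mbfs\mathcal{S}$. By definition, $\Ex[\infty]{f} : \Ex[\infty]{X} \to \Ex[\infty]{Y}$ is a Dugger--Isaksen weak equivalence between internal Kan complexes (the codomains being internal Kan complexes by \autoref{thm:hotoposes:Ex-infty.functor.for.simplicial.objects}). Then \autoref{prop:hotoposes:comparison.of.whe.of.internal.Kan.complexes} applies to $\Ex[\infty]{f}$ and promotes it to an internal weak homotopy equivalence.

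Now consider the naturality square
\[
\begin{tikzcd}
X \rar{f} \dar[swap]{i^\infty_X} & Y \dar{i^\infty_Y} \\
\Ex[\infty]{X} \rar[swap]{\Ex[\infty]{f}} & \Ex[\infty]{Y}
\end{tikzcd}
\]
in $\mbfs\mathcal{S}$. The two vertical morphisms are internal weak homotopy equivalences by \autoref{prop:hotoposes:Ex-infty.is.internally.weakly.homotopy.equivalent}, and the bottom horizontal morphism is an internal weak homotopy equivalence by the paragraph above. Applying the 2-out-of-3 property from \autoref{prop:hotoposes:2-of-3.property.of.internal.whe} to this square yields that $f$ itself is an internal weak homotopy equivalence, completing the proof.

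There is no real obstacle here: the theorem is essentially a formal consequence of the four preceding results, which have absorbed all the heavy lifting (the classifying topos machinery, Deligne's theorem, Makkai--Reyes completeness, and the classical $\Ex[\infty]$ facts). The only point worth double-checking is that the naturality square in $\mbfs\mathcal{S}$ lies in a setting where 2-out-of-3 genuinely applies, but since internal weak homotopy equivalences were defined in $\mbfs\mathcal{S}$ itself, this is automatic.
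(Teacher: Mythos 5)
Your proposal is correct and follows essentially the same route as the paper: both directions are handled identically, with the converse obtained from the naturality square for $i^\infty$, promoting $\Ex[\infty]{f}$ to an internal weak homotopy equivalence via the comparison proposition for internal Kan complexes, and then concluding with the 2-out-of-3 property. The only difference is cosmetic (your square is the transpose of the paper's), so nothing further is needed.
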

\begin{proof}
The `only if' direction was \autoref{lem:hotoposes:Raptis-Rosicky.weak.equivalences.are.Kan.weak.equivalences}; we will now prove the `if' direction.

Suppose $f : X \to Y$ is a Kan weak equivalence of simplicial objects in $\mathcal{S}$. That means $\Ex[\infty]{f} : \Ex[\infty]{X} \to \Ex[\infty]{Y}$ is a Dugger--Isaksen weak equivalence of internal Kan complexes. But the following diagram in $\mbfs \mathcal{S}$ commutes,
\[
\begin{tikzcd}
X \dar[swap]{f} \rar{i^\infty_X} &
\Ex[\infty]{X} \dar{\Ex[\infty]{f}} \\
Y \rar[swap]{i^\infty_Y} &
\Ex[\infty]{Y}
\end{tikzcd}
\]
where the horizontal arrows are internal weak homotopy equivalences by \autoref{prop:hotoposes:Ex-infty.is.internally.weakly.homotopy.equivalent}; hence, applying propositions \ref{prop:hotoposes:comparison.of.whe.of.internal.Kan.complexes} and \ref{prop:hotoposes:2-of-3.property.of.internal.whe}, we deduce that $f : X \to Y$ is indeed an internal weak homotopy equivalence.
\end{proof}

\section{Further properties}

First, we will prove that an internal Kan fibration of simplicial objects in a $\sigma$-pretopos is an internal trivial Kan fibration if and only if it is a Kan weak equivalence. For this, we need one more technical lemma from topos theory:

\begin{lem}
\label{lem:logic:separability.of.intersections}
Let $\mathcal{C}$ be a small category and let $J_0$ and $J_1$ be Grothendieck topologies on $\mathcal{C}$. If $J_2$ is the Grothendieck topology on $\mathcal{C}$ generated by the union of $J_0$ and $J_1$, then we have the following bicategorical pullback diagram in the 2-category of Grothendieck toposes:
\[
\begin{tikzcd}
\cat{\Sh{\mathcal{C}}{J_2}} \dar[hookrightarrow] \rar[hookrightarrow] &
\cat{\Sh{\mathcal{C}}{J_1}} \dar[hookrightarrow] \\
\cat{\Sh{\mathcal{C}}{J_0}} \rar[hookrightarrow] &
\Func{\op{\mathcal{C}}}{\cat{\Set}}
\end{tikzcd}
\]
In particular, if $\tuple{\mathcal{C}, J_0}$ and $\tuple{\mathcal{C}, J_1}$ are separable sites, then so is $\tuple{\mathcal{C}, J_2}$.
\end{lem}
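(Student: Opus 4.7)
The plan is to reduce everything to the following elementary observation: for any presheaf $P$ on $\mathcal{C}$, the collection $\mathrm{sh}(P)$ of sieves $R$ such that $P$ satisfies the sheaf condition with respect to $R$ is itself a Grothendieck topology on $\mathcal{C}$ (a standard fact, e.g.\ Lemma 2.2.5 in \citep[Part C]{Johnstone:2002b}). Hence $P$ is a $J$-sheaf if and only if $J \subseteq \mathrm{sh}(P)$. Applied to $J_2$, which is by definition the smallest Grothendieck topology containing $J_0 \cup J_1$, this yields that $P$ is a $J_2$-sheaf if and only if $J_0 \cup J_1 \subseteq \mathrm{sh}(P)$, if and only if $P$ is simultaneously a $J_0$-sheaf and a $J_1$-sheaf. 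Thus, as a full reflective subcategory of $\Func{\op{\mathcal{C}}}{\cat{\Set}}$, $\cat{\Sh{\mathcal{C}}{J_2}}$ is literally the intersection $\cat{\Sh{\mathcal{C}}{J_0}} \cap \cat{\Sh{\mathcal{C}}{J_1}}$.

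Next, I would promote this to a statement about bicategorical pullbacks in the 2-category of Grothendieck toposes. Recall that a geometric morphism $f : \mathcal{E} \to \Func{\op{\mathcal{C}}}{\cat{\Set}}$ factors (essentially uniquely) through the inclusion $\cat{\Sh{\mathcal{C}}{J}} \embedinto \Func{\op{\mathcal{C}}}{\cat{\Set}}$ if and only if the inverse image functor $f^*$ inverts the canonical morphisms $R \embedinto \Hom[\mathcal{C}]{\blank}{C}$ for every $J$-covering sieve $R$; and since the class of morphisms in $\Func{\op{\mathcal{C}}}{\cat{\Set}}$ that $f^*$ inverts is closed under the operations used to generate a Grothendieck topology, this class contains $J_2$ if and only if it contains $J_0 \cup J_1$, if and only if $f$ factors through both $\cat{\Sh{\mathcal{C}}{J_0}}$ and $\cat{\Sh{\mathcal{C}}{J_1}}$. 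Since each such factorisation through the inclusion of a subtopos is unique up to unique 2-isomorphism, this gives exactly the universal property of a bicategorical pullback.

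Finally, for the separability claim: if $\tuple{\mathcal{C}, J_0}$ and $\tuple{\mathcal{C}, J_1}$ are separable sites, then by definition each $J_i$ is generated by a countable family of sieves $\Sigma_i$, whence $\Sigma_0 \cup \Sigma_1$ is a countable family of sieves generating $J_2$; together with the hypothesis that $\mathcal{C}$ is a countable category with finite limits (which is inherited from $\tuple{\mathcal{C}, J_0}$), this makes $\tuple{\mathcal{C}, J_2}$ a separable site.

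I expect no serious obstacle. The only delicate point is the 2-categorical universal property, but this is routine once one recognises that any two geometric morphisms into a subtopos that become 2-isomorphic after composition with the subtopos inclusion are already 2-isomorphic (this is simply because the inclusion of a subtopos is 2-fully-faithful).
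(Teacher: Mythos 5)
Your second and third paragraphs are, in substance, the paper's own proof: the paper also verifies the bipullback property by testing against geometric morphisms from an arbitrary Grothendieck topos $\mathcal{E}$, using the identification of morphisms $\mathcal{E} \to \cat{\Sh{\mathcal{C}}{J}}$ with flat functors $\mathcal{C} \to \mathcal{E}$ sending $J$-covering sieves to jointly epimorphic families (plus the Makkai--Paré fact that an intersection of full replete subcategories is a bipullback), and the union of countable generating families disposes of separability. Your reformulation via $f^*$ inverting the monomorphisms $R \embedinto \Hom[\mathcal{C}]{\blank}{C}$ is the same criterion in different clothes, and the closure claim you need is correct in that form: the sieves whose inclusions are inverted by a left exact, cocontinuous $f^*$ do form a Grothendieck topology (pullback-stability because $f^*$ preserves pullbacks, local character by writing a sieve as a colimit of representables and using cocontinuity plus the fact that $f^*$ preserves monomorphisms).

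The first paragraph, however, rests on a false assertion: for a fixed presheaf $P$, the class $\mathrm{sh}(P)$ of sieves for which $P$ satisfies the sheaf condition is \emph{not} in general a Grothendieck topology, because it need not be pullback-stable. For instance, let $\mathcal{C}$ have objects $C, D, D'$ and non-identity morphisms $g : D \to C$ and $h : D' \to C$ only; the sieve $R = \set{g}$ on $C$ pulls back along $h$ to the empty sieve on $D'$, so any $P$ with $P \argp{g}$ bijective and $P \argp{D'}$ not a singleton lies in the purported topology at $R$ but not at $h^* R$. The conclusion you draw---that the $J_2$-sheaves are exactly the presheaves that are simultaneously $J_0$- and $J_1$-sheaves---is nevertheless true, but for a different reason: $J_0 \cup J_1$ is itself stable under pullback (each $J_i$ is), and sheaves for a pullback-stable family of sieves coincide with sheaves for the Grothendieck topology it generates; alternatively, replace $\mathrm{sh}(P)$ by the class of sieves \emph{all of whose pullbacks} satisfy the sheaf condition for $P$, which genuinely is a topology. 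Since your bipullback argument in the second paragraph does not actually use the first, the proof stands once this is repaired, and it then coincides in approach with the paper's.
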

\begin{proof}
Let $J$ be any Grothendieck topology on $\mathcal{C}$ and let $L : \mathcal{C} \to \cat{\Sh{\mathcal{C}}{J}}$ be the functor that sends an object $C$ to the sheaf associated with the representable presheaf $\Hom[\mathcal{C}]{\blank}{C}$. For any Grothendieck topos $\mathcal{E}$, the functor that sends a geometric morphism $x : \mathcal{E} \to \cat{\Sh{\mathcal{C}}{J}}$ to the functor $x^* L : \mathcal{C} \to \mathcal{E}$ is fully faithful and essentially surjective onto the full subcategory of torsors $\mathcal{C} \to \mathcal{E}$ that send $J$-covering sieves in $\mathcal{C}$ to jointly epimorphic families in $\mathcal{E}$, \ie a site morphism $\tuple{\mathcal{C}, J} \to \tuple{\mathcal{E}, K}$ where $K$ is the canonical topology on $\mathcal{E}$.\footnote{See Corollary 2 in \citep[\Chap VII, \Sect 10]{MLM} or Corollary 2.3.9 in \citep[Part C]{Johnstone:2002b}.} But it is clear that a torsor $\mathcal{C} \to \mathcal{E}$ sends $J_0$- and $J_1$-covering sieves in $\mathcal{C}$ to jointly epimorphic families in $\mathcal{E}$ if and only if it is a site morphism $\tuple{\mathcal{C}, J_2} \to \tuple{\mathcal{E}, K}$, so we are done.\footnote{Note that we are using the fact that the intersection of two full \emph{replete} subcategories has the universal property of a bicategorical pullback: see Proposition 5.1.1 in \citep{Makkai-Pare:1989}.}
\end{proof}

\begin{prop}
\label{prop:hotoposes:internal.trivial.Kan.fibrations.in.Grothendieck.toposes}
Let $f : X \to Y$ be a morphism of simplicial objects in a Grothendieck topos $\mathcal{S}$. The following are equivalent:
\begin{enumerate}[(i)]
\item $f : X \to Y$ is an internal trivial Kan fibration.

\item $f : X \to Y$ is an internal Kan fibration and a Kan weak equivalence.

\item $f : X \to Y$ is an internal Kan fibration and an internal weak homotopy equivalence.

\end{enumerate}
\end{prop}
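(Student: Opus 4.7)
The plan is to reduce everything to the classical case $\mathcal{S} = \cat{\Set}$ via the classifying-topos techniques developed in this section. The equivalence (ii) $\iff$ (iii) is immediate from \autoref{thm:hotoposes:comparison.of.whe.in.Grothendieck.toposes}, so the substance of the argument lies in establishing (i) $\iff$ (iii).

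For (i) $\implies$ (iii), I would first note that every internal trivial Kan fibration is automatically an internal Kan fibration. This is a statement entirely within regular logic — one weak-pullback-square property for the horn inclusion, deduced from the analogous property for the boundary inclusion — so Barr's completeness theorem (\autoref{thm:generalities:classical.completeness.for.regular.categories}) reduces it at once to the classical fact in $\cat{\Set}$. To show in addition that $f$ is an internal weak homotopy equivalence, I would pass to the universal internal trivial Kan fibration in its classifying topos. Following the template of \autoref{prop:hotoposes:2-of-3.property.of.internal.whe}, this classifying topos is cut out of the morphism classifying topos (a presheaf topos on the opposite of the countable category of morphisms of finite simplicial sets) by the Grothendieck topology generated by the weak-pullback conditions for the boundary inclusions $\partial \Delta^n \embedinto \Delta^n$. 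Since this is a countable collection of sieves on a countable site, \autoref{thm:logic:Makkai-Reyes.completeness} furnishes enough points, and the claim reduces to the classical statement that every trivial Kan fibration of simplicial sets is a weak homotopy equivalence.

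For (iii) $\implies$ (i), I would apply the same strategy to the classifying topos for morphisms that are simultaneously internal Kan fibrations and internal weak homotopy equivalences. This sits as a subtopos of the morphism classifying topos whose defining topology is the join of two: the horn-inclusion topology carving out internal Kan fibrations, and the topology corresponding via \autoref{lem:logic:topologies.of.presheaf.type} to the inclusion of $\mathcal{B}_\mathrm{whe}$ as a presheaf topos on the finitely presentable weak homotopy equivalences (\autoref{thm:ssets:Raptis-Rosicky}(iii)). Both component topologies are generated by at most countably many sieves on a countable site, so \autoref{lem:logic:separability.of.intersections} yields a separable site for the join. Makkai--Reyes then supplies enough points, and the claim reduces to the classical fact that a Kan fibration of simplicial sets which is a weak homotopy equivalence is a trivial Kan fibration. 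The principal obstacle in this step is identifying the ``internal weak homotopy equivalence'' topology on the morphism site and confirming its countable generation; here \autoref{lem:logic:topologies.of.presheaf.type}(ii), which pins down a unique minimal covering sieve on each object, is the decisive tool.
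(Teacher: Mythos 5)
Your proposal is correct and follows the paper's own strategy: the hard direction (iii) \implies (i) is argued exactly as in the paper, via the join of the fibration topology and the weak-equivalence topology supplied by \autoref{lem:logic:topologies.of.presheaf.type}, then \autoref{lem:logic:separability.of.intersections} and \autoref{thm:logic:Makkai-Reyes.completeness} to get enough points, followed by reduction to the classical case. The only (harmless) deviations are that you prove (i) \implies (iii) directly instead of (i) \implies (ii), and that you obtain enough points for the trivial-fibration classifying topos from separability and Makkai--Reyes, whereas the paper notes its topology is generated by singletons and invokes Deligne's theorem on coherent toposes; both reductions to $\mathcal{S} = \cat{\Set}$ are valid.
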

\begin{proof} \exerproof
It suffices to verify each claim for the universal example in the appropriate classifying topos.

\bigskip\noindent
(i) \implies (ii). Let $\mathcal{B}_\mathrm{t.fib}$ be the classifying for trivial Kan fibrations. $\mathcal{B}_\mathrm{t.fib}$ is a subtopos of the classifying topos for morphisms of simplicial objects and corresponds to a Grothendieck topology generated by singletons, so by Deligne's theorem on coherent toposes, $\mathcal{B}_\mathrm{t.fib}$ has enough points. Thus, we may reduce the claim to the case where $\mathcal{S} = \cat{\Set}$, which is well known.\footnote{See Theorem 11.2 in \citep[\Chap I]{GJ}.}

\bigskip\noindent
(ii) \implies (iii). Use \autoref{thm:hotoposes:comparison.of.whe.in.Grothendieck.toposes}.

\bigskip\noindent
(iii) \implies (i). Let $\mathcal{B}_\mathrm{fib}$ be the classifying topos for Kan fibrations. $\mathcal{B}_\mathrm{fib}$ is a subtopos of the classifying topos for morphisms of simplicial objects and corresponds to a Grothendieck topology generated by singletons, and by \autoref{lem:logic:topologies.of.presheaf.type}, $\mathcal{B}_\mathrm{whe}$ corresponds to a Grothendieck topology generated by countably many sieves. Applying \autoref{lem:logic:separability.of.intersections} and \autoref{thm:logic:Makkai-Reyes.completeness}, we deduce that $\mathcal{B}_\mathrm{fib} \cap \mathcal{B}_\mathrm{whe}$ is a topos with enough points. Once again, we may reduce the claim to the case where $\mathcal{S} = \cat{\Set}$, which is well known.
\end{proof}

\begin{cor}
\label{cor:hotoposes:internal.trivial.Kan.fibrations.in.sigma.pretoposes}
\needspace{3.0\baselineskip}
Let $f : X \to Y$ be a morphism of simplicial objects in a $\sigma$-pretopos. The following are equivalent:
\begin{enumerate}[(i)]
\item $f : X \to Y$ is an internal trivial Kan fibration.

\item $f : X \to Y$ is an internal Kan fibration and a Kan weak equivalence.
\end{enumerate}
\end{cor}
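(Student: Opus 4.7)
My plan is to reduce to the Grothendieck topos case already handled by \autoref{prop:hotoposes:internal.trivial.Kan.fibrations.in.Grothendieck.toposes}, following the same template as \autoref{cor:hotoposes:Ex-infty.in.sigma.pretoposes}.

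\textbf{Step 1: Reduction to a small $\sigma$-pretopos.} The statement involves only a countable amount of data (the morphism $f$, plus all the simplicial structure and finite weighted limits of the iterated $\Ex$ tower needed to state the conditions), so we can choose a small $\sigma$-subpretopos $\mathcal{S}_0$ of $\mathcal{S}$ containing $f$ and all the relevant objects. It is enough to verify both conditions inside $\mathcal{S}_0$.

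\textbf{Step 2: Embedding into a Grothendieck topos.} As noted in \autoref{rem:hotoposes:sigma.pretoposes.and.countable.colimits}, we may embed $\mathcal{S}_0$ into $\mathcal{E} = \cat{\Sh{\mathcal{S}_0}{J}}$, where $J$ is the $\sigma$-coherent topology, via the canonical site morphism $i : \mathcal{S}_0 \to \mathcal{E}$. This functor is $\sigma$-coherent, so it preserves finite limits and colimits for countable diagrams, and it is fully faithful (hence conservative). In particular $i$ preserves regular epimorphisms, and being fully faithful it also reflects them (since being a regular epi is characterised by coequalising one's kernel pair, which is preserved and reflected by fully faithful finite-limit-preserving functors).

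\textbf{Step 3: Preservation and reflection of the relevant notions.} As observed in the paragraph before \autoref{thm:generalities:classical.completeness.for.regular.categories}, conservative regular functors reflect internal Kan fibrations, internal trivial Kan fibrations, and Dugger--Isaksen weak equivalences; so $i$ preserves and reflects all three. Moreover, $\Ex[\infty]$ is built from finite weighted limits and colimits of $\omega$-sequences, both preserved by $i$, so $i(\Ex[\infty]{X}) \cong \Ex[\infty]{i(X)}$ naturally. Consequently $f$ is a Kan weak equivalence in $\mathcal{S}_0$ if and only if $i(f)$ is a Kan weak equivalence in $\mathcal{E}$.

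\textbf{Step 4: Conclusion.} Combining steps~2 and~3, each of the two conditions for $f$ in $\mathcal{S}_0$ holds if and only if the corresponding condition holds for $i(f)$ in the Grothendieck topos $\mathcal{E}$. Their equivalence for $i(f)$ is exactly the content of \autoref{prop:hotoposes:internal.trivial.Kan.fibrations.in.Grothendieck.toposes}~(i)~\iff~(ii), which finishes the proof.

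I do not anticipate a serious obstacle here; the one point that needs checking is precisely that the embedding $i$ into the sheaf topos commutes with $\Ex[\infty]$ (and reflects Dugger--Isaksen weak equivalences), but this is immediate from its $\sigma$-coherence and full faithfulness.
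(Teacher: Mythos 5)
Your proof is correct and follows essentially the same route as the paper: the paper's own argument is precisely to use the embedding theorem for $\sigma$-pretoposes (\autoref{rem:hotoposes:sigma.pretoposes.and.countable.colimits}) to reduce to the Grothendieck topos case settled by \autoref{prop:hotoposes:internal.trivial.Kan.fibrations.in.Grothendieck.toposes}, exactly as you do, just stated without spelling out the preservation/reflection details. Your Steps 1--3 simply make explicit what the paper leaves implicit, and they check out.
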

\begin{proof}
Use the embedding theorem for $\sigma$-pretoposes (\autoref{rem:hotoposes:sigma.pretoposes.and.countable.colimits}) to reduce to the case where $\mathcal{S}$ is a Grothendieck topos.
\end{proof}

\needspace{3.0\baselineskip}
Next, we will show that Kan weak equivalences are well behaved with respect to pullbacks and pushouts. 

\begin{prop}
\label{prop:hotoposes:right.properness.of.internal.whe}
Let $\mathcal{S}$ be a $\sigma$-pretopos. Consider a pullback diagram in $\mbfs \mathcal{S}$:
\[
\begin{tikzcd}
X' \dar \rar{f} &
X \dar{p} \\
Y' \rar[swap]{g} &
Y
\end{tikzcd}
\]
If $p : X \to Y$ is an internal Kan fibration and $g : Y' \to Y$ is a Kan weak equivalence, then $f : X' \to X$ is also a Kan weak equivalence.
\end{prop}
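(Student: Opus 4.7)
The plan is to reduce, in two steps, to classical right properness of the Kan--Quillen model structure on $\cat{\SSet}$, following the template established in the proofs of \autoref{prop:hotoposes:2-of-3.property.of.internal.whe} and \autoref{prop:hotoposes:internal.trivial.Kan.fibrations.in.Grothendieck.toposes}.

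First I would reduce to the case where $\mathcal{S}$ is a Grothendieck topos. Since the statement only involves countably many data, one may replace $\mathcal{S}$ by a small $\sigma$-pretopos and embed it, as in \autoref{rem:hotoposes:sigma.pretoposes.and.countable.colimits}, into the topos of sheaves for the $\sigma$-coherent topology. This embedding is a conservative $\sigma$-coherent functor, so it preserves and reflects finite limits, colimits for $\omega$-sequences, and regular epimorphisms; consequently it preserves and reflects pullbacks, the functor $\Ex[\infty]$, internal Kan fibrations, and Dugger--Isaksen weak equivalences (and hence Kan weak equivalences). Using \autoref{thm:hotoposes:comparison.of.whe.in.Grothendieck.toposes}, the hypothesis on $g$ and the desired conclusion on $f$ can then be rephrased in terms of internal weak homotopy equivalences.

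Next I would construct a classifying topos $\mathcal{B}$ for the universal pullback square in which $p$ is an internal Kan fibration and $g$ is an internal weak homotopy equivalence, and show that $\mathcal{B}$ has enough points. Let $\mathcal{C}$ be the opposite of a small skeleton of the category of cospans of finite simplicial sets (a countable category with finite limits); then the classifying topos for pullback squares of simplicial objects is $\Psh{\mathcal{C}}$. The functors $U_p, U_g : \mathcal{C} \to \mathcal{D}$ that ``extract $p$'' and ``extract $g$'' respectively, where $\mathcal{D}$ is the opposite of a small skeleton of morphisms of finite simplicial sets, both admit right adjoints: explicitly, the opposites of the functors sending a morphism $h : A' \to A$ to the cospans $\emptyset \xrightarrow{!} A \xleftarrow{h} A'$ and $A' \xrightarrow{h} A \xleftarrow{!} \emptyset$. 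Pulling back along $U_p$ the Grothendieck topology on $\mathcal{D}$ classifying internal Kan fibrations (which is generated by singletons, hence corresponds to a separable site) and pulling back along $U_g$ the Grothendieck topology classifying internal weak homotopy equivalences (which is countably generated by \autoref{thm:ssets:Raptis-Rosicky} and \autoref{lem:logic:topologies.of.presheaf.type}, hence also separable), \autoref{cor:logic:separability.of.pullbacks} yields separable topologies $J_p$ and $J_g$ on $\mathcal{C}$, and \autoref{lem:logic:separability.of.intersections} then shows that the topology generated by their union is again separable, so $\mathcal{B}$ is presented by a separable site.

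Applying \autoref{thm:logic:Makkai-Reyes.completeness}, $\mathcal{B}$ has enough points, and the claim reduces to the case $\mathcal{S} = \cat{\Set}$, which is the classical right properness of the Kan--Quillen model structure: the pullback of a weak homotopy equivalence along a Kan fibration of simplicial sets is a weak homotopy equivalence, provable via the long exact sequence of homotopy groups of a Kan fibration together with the five-lemma. The main obstacle will be the separable-site bookkeeping---in particular, verifying that the topology generated by $J_p \cup J_g$ on $\mathcal{C}$ really cuts out the subtopos of $\Psh{\mathcal{C}}$ classifying pullback squares with both structural properties, rather than something coarser or finer, and spelling out the right adjoints at the level of finite cospans with enough care that \autoref{cor:logic:separability.of.pullbacks} applies as stated.
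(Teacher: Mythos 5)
Your proposal is essentially sound, but it takes a genuinely different (and much heavier) route than the paper. The paper's own proof never leaves the $\sigma$-pretopos and uses no classifying-topos machinery at all: by \autoref{thm:hotoposes:Ex-infty.functor.for.simplicial.objects} the functor $\Ex[\infty]$ preserves finite limits (hence the pullback square) and internal Kan fibrations, and by \autoref{cor:hotoposes:Ex-infty.in.sigma.pretoposes} together with \autoref{rem:hotoposes:new.and.old.internal.whe.of.Kan.complexes} it preserves and reflects Kan weak equivalences; so one may assume $X, Y, X', Y'$ are internal Kan complexes, and then the statement is exactly Brown's pullback lemma in the category of fibrant objects $\cat{\Kancx[\mathcal{S}]}$ provided by \autoref{thm:hotoposes:category.of.internally.fibrant.objects}. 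Your argument instead transplants the paper's proof of \emph{left} properness (\autoref{prop:hotoposes:left.properness.of.internal.whe}) to cospans: embed into a Grothendieck topos, translate the hypothesis and conclusion via \autoref{thm:hotoposes:comparison.of.whe.in.Grothendieck.toposes}, classify cospans with one leg an internal Kan fibration and the other an internal weak homotopy equivalence by a separable site, and conclude by Makkai--Reyes plus classical right properness of $\cat{\SSet}$. The bookkeeping you worry about does go through, by the same lemmas the paper uses for pushouts: the right adjoints needed for \autoref{cor:logic:separability.of.pullbacks} are the opposites of the left Kan extensions filling the missing leg with $\emptyset$, and \autoref{lem:logic:pullback.topologies} together with \autoref{lem:logic:separability.of.intersections} identifies the join topology with the subtopos classifying cospans having both properties (a pullback square being determined by its cospan, and inverse images preserving pullbacks). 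What each approach buys: yours is uniform with the left-properness argument and shows the pattern generalises, but it requires \autoref{thm:hotoposes:comparison.of.whe.in.Grothendieck.toposes}, the embedding theorem and the enough-points apparatus; the paper's proof is a few lines, uses only the fibrant-objects structure, and in that form the reduction to Kan complexes is the only place the $\sigma$-pretopos hypothesis enters. One small caution on your final classical input: the objects of the universal cospan are not Kan complexes, so the long-exact-sequence-plus-five-lemma sketch does not apply as stated; cite right properness of the Kan--Quillen model structure in a form valid for arbitrary simplicial sets (e.g.\ via minimal fibrations or via geometric realisation), or reduce to Kan complexes with $\Ex[\infty]$ first --- which is precisely the paper's move.
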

\begin{proof}
By \autoref{thm:hotoposes:Ex-infty.functor.for.simplicial.objects}, $\Ex[\infty] : \mbfs \mathcal{S} \to \cat{\Kancx[\mathcal{S}]}$ preserves pullbacks and internal Kan fibrations, and by \autoref{cor:hotoposes:Ex-infty.in.sigma.pretoposes}, it preserves and \emph{reflects} Kan weak equivalences. Thus, we may assume that $X, Y, X', Y'$ are all in $\cat{\Kancx[\mathcal{S}]}$; but \autoref{thm:hotoposes:category.of.internally.fibrant.objects} says $\cat{\Kancx[\mathcal{S}]}$ is a category of fibrant objects, and the claim is known to be true in any category of fibrant objects.\footnote{See Lemma 2 in \citep[\Sect 4]{Brown:1973} or Lemma 8.5 in \citep[\Chap II]{GJ}.}
\end{proof}

\begin{prop}
\label{prop:hotoposes:left.properness.of.internal.whe}
Let $\mathcal{S}$ be a Grothendieck topos. Consider a pushout diagram in $\mbfs \mathcal{S}$:
\[
\begin{tikzcd}
X \dar[swap]{i} \rar{f} &
X' \dar \\
Y \rar[swap]{g} &
Y'
\end{tikzcd}
\]
If $i : X \to Y$ is a monomorphism and $f : X \to X'$ is an internal weak homotopy equivalence, then $g : Y \to Y'$ is also an internal weak homotopy equivalence.
\end{prop}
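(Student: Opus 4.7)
The plan is to mimic the approach used for the 2-out-of-3 property (\autoref{prop:hotoposes:2-of-3.property.of.internal.whe}): verify the statement for the universal example in an appropriate classifying topos, show that this topos has enough points via the Makkai--Reyes theorem, and thereby reduce to the classical fact that the Kan--Quillen model structure on $\cat{\SSet}$ is left proper.

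To set up the site, I would take $\mathcal{C}$ to be the opposite of the category of spans $Y \leftarrow X \to X'$ between finite simplicial sets; the presheaf topos $\Func{\op{\mathcal{C}}}{\cat{\Set}}$ then classifies pairs of morphisms of simplicial objects with common domain. Because inverse image functors preserve colimits, the pushout $Y \cup_X X'$ in any Grothendieck topos is obtained by pullback from the universal pushout there, so the pushout leg $g$ is universally defined. I would then impose two conditions on the universal pair: that $i : X \to Y$ is a monomorphism (a coherent condition --- degreewise monicness --- corresponding to a Grothendieck topology on $\mathcal{C}$ generated by countably many singleton sieves), and that $f : X \to X'$ is an internal weak homotopy equivalence (by \autoref{thm:hotoposes:comparison.of.whe.in.Grothendieck.toposes} combined with \autoref{thm:ssets:Raptis-Rosicky} and \autoref{lem:logic:topologies.of.presheaf.type}, a Grothendieck topology on $\mathcal{C}$ generated by countably many sieves). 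Applying \autoref{lem:logic:separability.of.intersections} to the intersection of the two resulting subtoposes shows that the classifying topos $\mathcal{B}$ for such pairs arises from a separable site, since $\mathcal{C}$ itself is countable.

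By \autoref{thm:logic:Makkai-Reyes.completeness}, the topos $\mathcal{B}$ therefore has enough points, so it suffices to verify the conclusion after pulling back along each point $p : \cat{\Set} \to \mathcal{B}$. Inverse image functors preserve monomorphisms, pushouts, and internal weak homotopy equivalences, and on $\cat{\Set}$ the latter are weak homotopy equivalences in the usual sense (\autoref{cor:hotoposes:internal.whe.of.simplicial.sets}), so the claim reduces to the well-known fact that the pushout of a weak homotopy equivalence of simplicial sets along a monomorphism is again a weak homotopy equivalence.

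The main obstacle is the site-theoretic setup: correctly identifying $\mathcal{C}$, verifying that the two conditions (``$i$ monic'', ``$f$ an internal weak homotopy equivalence'') truly correspond to Grothendieck topologies on $\mathcal{C}$ generated by countably many sieves, and confirming countability of $\mathcal{C}$. The rest --- intersecting the two subtoposes, invoking separability and Makkai--Reyes, and pulling back to $\cat{\Set}$ to cite classical left properness --- is formal.
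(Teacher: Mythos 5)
Your proposal is correct and takes essentially the same route as the paper: the same site (the opposite of the category of spans of finite simplicial sets), the same expression of the two side conditions as countably generated Grothendieck topologies combined via \autoref{lem:logic:separability.of.intersections}, then \autoref{thm:logic:Makkai-Reyes.completeness} to get enough points and a reduction to classical left properness in $\cat{\Set}$. The only difference is cosmetic: the paper transports the weak-equivalence condition to the span site via \autoref{lem:logic:pullback.topologies} and \autoref{cor:logic:separability.of.pullbacks} (using that the leg-projection admits the requisite adjoint), which is precisely the verification you flag as the remaining obstacle, and it handles the monomorphism condition through the presheaf classifying topos for monomorphisms rather than directly as a singleton-generated topology.
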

\begin{proof} \exerproof
Let $\mathcal{C}$ be the \emph{opposite} of the category of spans in the category of finite simplicial sets. It is not hard to see that $\Func{\op{\mathcal{C}}}{\cat{\Set}}$ is the classifying topos for spans of simplicial objects. The classifying topos for monomorphisms of simplicial objects is also a presheaf topos (namely, the topos presheaves on the \emph{opposite} of the category of monomorphisms of finite simplicial sets), so by applying \autoref{lem:logic:topologies.of.presheaf.type}, \autoref{cor:logic:separability.of.pullbacks}, and~\autoref{lem:logic:separability.of.intersections}, we deduce that there is a Grothendieck topology $J$ such that $\tuple{\mathcal{C}, J}$ is a separable site and $\mathcal{B} = \cat{\Sh{\mathcal{C}}{J}}$ classifies spans where one leg is a monomorphism and the other leg is a weak homotopy equivalence. It suffices to prove the claim for the universal such span in $\mathcal{B}$; but by \autoref{thm:logic:Makkai-Reyes.completeness}, $\mathcal{B}$ has enough points, so the claim is reduced to the case where $\mathcal{S} = \cat{\Set}$, which is well known.\footnote{See Corollary 8.6 in \citep[\Chap II]{GJ}.}
\end{proof}

\begin{cor}
\label{cor:hotoposes:left.properness.of.internal.whe.in.sigma.pretoposes}
Let $\mathcal{S}$ be a $\sigma$-pretopos. Consider a pushout diagram in $\mbfs \mathcal{S}$:
\[
\begin{tikzcd}
X \dar[swap]{i} \rar{f} &
X' \dar{i'} \\
Y \rar[swap]{g} &
Y'
\end{tikzcd}
\]
If $i : X \to Y$ is a monomorphism and $f : X \to X'$ is an internal weak homotopy equivalence, then $g : Y \to Y'$ is also an internal weak homotopy equivalence.
\end{cor}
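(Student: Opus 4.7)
The plan is to reduce the corollary to \autoref{prop:hotoposes:left.properness.of.internal.whe} via the embedding theorem for $\sigma$-pretoposes, following exactly the template of \autoref{cor:hotoposes:Ex-infty.in.sigma.pretoposes} and \autoref{cor:hotoposes:internal.trivial.Kan.fibrations.in.sigma.pretoposes}. First, since only countably many objects and morphisms of $\mathcal{S}$ enter the statement, I would replace $\mathcal{S}$ by a small sub-$\sigma$-pretopos containing the given pushout square, and then use the fully faithful, $\sigma$-coherent embedding $\iota : \mathcal{S} \to \mathcal{E}$ into the Grothendieck topos $\mathcal{E}$ of sheaves for the $\sigma$-coherent topology on $\mathcal{S}$, as in \autoref{rem:hotoposes:sigma.pretoposes.and.countable.colimits}. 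This $\iota$ preserves finite limits and countable colimits; in particular it preserves monomorphisms, pushouts, and the $\omega$-sequential colimits used to build $\Ex[\infty]$.

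Next I would check that $\iota$ both preserves and reflects Kan weak equivalences. Preservation is immediate: $\iota$ is a regular functor, so it preserves regular epimorphisms, and since it commutes with the finite weighted limits appearing in \autoref{dfn:hotoposes:DI.whe} and with $\Ex[\infty]$, it preserves Dugger--Isaksen weak equivalences and hence Kan weak equivalences. For the reflection, I would use full faithfulness: given a morphism $e$ in $\mathcal{S}$ whose image $\iota(e)$ is a regular epimorphism in $\mathcal{E}$, factor $e = m \circ q$ through its image in $\mathcal{S}$ with $q$ regular epi and $m$ mono; then $\iota(m)$ is a monomorphism in $\mathcal{E}$ whose composite with the regular epimorphism $\iota(q)$ is a regular epimorphism, forcing $\iota(m)$ to be invertible, and hence by full faithfulness $m$ itself is invertible. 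So $\iota$ reflects regular epimorphisms, and therefore Dugger--Isaksen weak equivalences and Kan weak equivalences.

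With these preservation and reflection properties in hand the corollary follows quickly. The image under $\iota$ of the given square is still a pushout in $\mathcal{E}$ with $\iota(i)$ a monomorphism and $\iota(f)$ a Kan weak equivalence, which by \autoref{thm:hotoposes:comparison.of.whe.in.Grothendieck.toposes} coincides with the notion of internal weak homotopy equivalence there. \autoref{prop:hotoposes:left.properness.of.internal.whe} then shows that $\iota(g)$ is an internal weak homotopy equivalence, equivalently a Kan weak equivalence, in $\mathcal{E}$; reflecting across $\iota$ yields that $g : Y \to Y'$ is an internal weak homotopy equivalence in $\mathcal{S}$. The main obstacle I anticipate is the reflection of regular epimorphisms: preservation is formal, but the reflection step genuinely depends on full faithfulness of the embedding together with the existence of regular epi / mono factorisations, and without it the two notions of weak equivalence could drift apart between $\mathcal{S}$ and $\mathcal{E}$.
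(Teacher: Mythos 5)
Your proposal is correct and follows essentially the same route as the paper: the paper's proof is just the one-line reduction to \autoref{prop:hotoposes:left.properness.of.internal.whe} via the embedding of (a small replacement of) $\mathcal{S}$ into the topos of sheaves for the $\sigma$-coherent topology, as in \autoref{rem:hotoposes:sigma.pretoposes.and.countable.colimits}. Your additional verification that the embedding preserves and reflects Kan weak equivalences (via preservation of finite weighted limits, $\omega$-colimits, and reflection of regular epimorphisms using full faithfulness and image factorisations) simply makes explicit what the paper leaves implicit, and is accurate.
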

\begin{proof}
Use the embedding theorem for $\sigma$-pretoposes (\autoref{rem:hotoposes:sigma.pretoposes.and.countable.colimits}) to reduce to the case where $\mathcal{S}$ is a Grothendieck topos.
\end{proof}

\section{Local homotopy theory}

\makenumpar
Throughout this section, $\tuple{\mathcal{C}, J}$ is a small Grothendieck site. There is a well-known adjunction of the form below,
\[
j^* \dashv j_* : \cat{\Sh{\mathcal{C}}{J}} \to \Func{\op{\mathcal{C}}}{\cat{\Set}}
\]
where the right adjoint $j_*$ is the inclusion and the left adjoint $j^*$ preserves finite limits.\footnote{See Theorem 1 in \citep[\Chap III, \Sect 5]{MLM} or Proposition 2.2.6 in \citep[Part C]{Johnstone:2002b}.} For brevity, we will write $\cat{\SPsh{\mathcal{C}}}$ for the category of simplicial presheaves on $\mathcal{C}$.

\begin{dfn}
\ \noprelistbreak
\begin{itemize}
\item A \strong{$J$-local epimorphism} (\resp \strong{$J$-local isomorphism}) of presheaves on $\mathcal{C}$ is a morphism $f : X \to Y$ in $\Func{\op{\mathcal{C}}}{\cat{\Set}}$ such that $j^* f : j^* X \to j^* Y$ is an epimorphism (\resp isomorphism) in $\cat{\Sh{\mathcal{C}}{J}}$.

\item A \strong{$J$-local Kan fibration} (\resp \strong{$J$-local trivial Kan fibration}, \strong{$J$-local weak homotopy equivalence}) of simplicial presheaves on $\mathcal{C}$ is a morphism $f : X \to Y$ in $\cat{\SPsh{\mathcal{C}}}$ such that $j^* : j^* X \to j^* Y$ is an internal Kan fibration (\resp internal trivial Kan fibration, internal weak homotopy equivalence) of simplicial objects in $\cat{\Sh{\mathcal{C}}{J}}$.

\item A \strong{$J$-locally fibrant simplicial presheaf} is an object $X$ in $\cat{\SPsh{\mathcal{C}}}$ such that $j^* X$ is an internal Kan complex in $\cat{\Sh{\mathcal{C}}{J}}$.
\end{itemize}
\end{dfn}

The above definitions agree with other definitions found in the literature. To prove this, we require a well-known lemma:

\begin{lem}
\label{lem:logic:local.epimorphisms.and.covering.sieves}
\needspace{3\baselineskip}
Let $f : X \to Y$ be a morphism in $\Func{\op{\mathcal{C}}}{\cat{\Set}}$. The following are equivalent:
\begin{enumerate}[(i)]
\item $f : X \to Y$ is a $J$-local epimorphism.

\item For each object $C$ in $\mathcal{C}$ and each element $y$ of $Y \argp{C}$, there exists a $J$-covering sieve $U$ on $C$ such that, for each morphism $u : U' \to U$ that is in $U$, there is an element $x_u$ of $X \argp{C'}$ such that $f_{C'} \argp{x_u} = u^* y$.

\item For each object $C$ in $\mathcal{C}$ and each element $y$ of $Y \argp{C}$, if $U$ is the sieve of morphisms $u : C' \to C$ in $\mathcal{C}$ for which there is an element $x_u$ of $X \argp{C'}$ such that $f_{C'} \argp{x_u} = u^* y$, then $U$ is a $J$-covering sieve.
\end{enumerate}
\end{lem}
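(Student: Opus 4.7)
The plan is to handle the easy equivalence (ii) $\Leftrightarrow$ (iii) by sieve bookkeeping, and then reduce (i) $\Leftrightarrow$ (iii) to a standard fact about sheafification: a monomorphism of presheaves is inverted by $j^*$ precisely when it is locally surjective.

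For (iii) $\Rightarrow$ (ii), take the sieve of (iii) itself. For (ii) $\Rightarrow$ (iii), observe that the sieve $U$ appearing in (iii) is the \emph{maximal} sieve with the stated property: by its definition, any $u$ that lifts through $f$ automatically lies in $U$. Hence the covering sieve furnished by (ii) is contained in $U$, and since any sieve containing a $J$-covering sieve is itself $J$-covering, $U$ is $J$-covering.

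For (i) $\Leftrightarrow$ (iii), first take the (epi, mono) factorisation $X \twoheadrightarrow I \hookrightarrow Y$ of $f$ in $\Func{\op{\mathcal{C}}}{\cat{\Set}}$, where $I$ is the sub-presheaf $I(C) = \{y \in Y(C) : y \in \mathrm{im}(f_C)\}$. Because $j^* : \Func{\op{\mathcal{C}}}{\cat{\Set}} \to \cat{\Sh{\mathcal{C}}{J}}$ is exact and preserves epimorphisms, applying $j^*$ gives the epi-mono factorisation of $j^*f$. Thus $j^*f$ is an epimorphism if and only if the inclusion $I \hookrightarrow Y$ is a $J$-local isomorphism.

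The main (standard) step is then to identify, for a monomorphism $I \hookrightarrow Y$ of presheaves, being a $J$-local isomorphism with local surjectivity: for each $C$ and $y \in Y(C)$, the sieve $\{u : C' \to C \mid u^* y \in I(C')\}$ is $J$-covering. This is a direct consequence of the plus construction / the description of sheafification via compatible families (cf.\ \citep[\Chap III, \Sect 7]{MLM}). Once that fact is in hand, the sieve just described is exactly the sieve $U$ of (iii), so (i) $\Leftrightarrow$ (iii) is immediate. I do not anticipate any genuine obstacle; the only non-trivial input is the description of sheafification, which I would simply cite rather than reprove.
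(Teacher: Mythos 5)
Your proposal is correct, and it ultimately rests on the same input as the paper: the paper's own proof is nothing but a citation of Corollary 5 in Mac Lane--Moerdijk, Ch.~III \S 8, which is precisely the statement that a morphism of presheaves is locally surjective if and only if its associated sheaf morphism is an epimorphism. Your extra steps --- the maximal-sieve argument for (ii) $\Leftrightarrow$ (iii) and the image-factorisation reduction of (i) $\Leftrightarrow$ (iii) to the case of a dense monomorphism inverted by sheafification --- are sound, but they just unwind the same standard sheafification facts that the paper cites wholesale.
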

\begin{proof} \openproof
See Corollary 5 in \citep[\Chap III, \Sect 8]{MLM}.
\end{proof}

\begin{thm}
\ \noprelistbreak
\begin{enumerate}[(i)]
\item A morphism of simplicial presheaves is a $J$-local Kan fibration if and only if it is a local fibration in the sense of \citet{Jardine:1987}.

\item A morphism of simplicial presheaves is a $J$-local weak homotopy equivalence if and only if it is a topological weak equivalence in the sense of \citet{Jardine:1987} or a local weak equivalence in the sense of \citet{Dugger-Isaksen:2004b}.

\item A morphism of simplicial presheaves is a $J$-local trivial Kan fibration if and only if it is a local acyclic fibration in the sense of \citet{Dugger-Isaksen:2004b}.
\end{enumerate}
\end{thm}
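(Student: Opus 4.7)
The plan is to unpack each of the three statements into a local lifting or local epimorphism condition at the level of presheaves, and then to match that condition with the definitions in the literature. The central tool is that the sheafification $j^* : \Func{\op{\mathcal{C}}}{\cat{\Set}} \to \cat{\Sh{\mathcal{C}}{J}}$ preserves finite limits, preserves regular epimorphisms (being a left adjoint into a topos, where every epimorphism is regular), and sends a morphism of presheaves to an epimorphism if and only if that morphism is a $J$-local epimorphism; the last is made explicit by \autoref{lem:logic:local.epimorphisms.and.covering.sieves}.

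For (i), these preservation properties apply to the defining diagram of an internal Kan fibration: since $j^*$ commutes with each weighted limit $\kwlim{Z}{\blank}$ for a finite simplicial set $Z$, the requirement that the comparison map be a regular epimorphism in $\cat{\Sh{\mathcal{C}}{J}}$ becomes the requirement that the analogous comparison map in $\Func{\op{\mathcal{C}}}{\cat{\Set}}$ be a $J$-local epimorphism. Unwinding with \autoref{lem:logic:local.epimorphisms.and.covering.sieves}, this is the statement that at each object $C$, any horn in $X \argp{C}$ lying over an $n$-simplex in $Y \argp{C}$ admits, after refining along some $J$-covering sieve, the desired diagonal filler\hairspace ---\hairspace precisely the sectionwise local horn-filling condition of \citet{Jardine:1987}. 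The identical argument with boundary inclusions in place of horn inclusions yields (iii).

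Part (ii) is the main content; I would chain together results from \S 2 and \S 3. By definition, $f$ is a $J$-local weak homotopy equivalence iff $j^* f$ is an internal weak homotopy equivalence in $\cat{\Sh{\mathcal{C}}{J}}$; by \autoref{thm:hotoposes:comparison.of.whe.in.Grothendieck.toposes} this is in turn equivalent to $j^* f$ being a Kan weak equivalence, i.e.\ to $\Ex[\infty]{j^* f}$ being a Dugger--Isaksen weak equivalence of internal Kan complexes. Since $j^*$ preserves finite limits and all colimits, it commutes with the construction of $\Ex[\infty]$, so the condition may be rewritten as $j^* \Ex[\infty]{f}$ being a Dugger--Isaksen weak equivalence in $\cat{\Kancx[\cat{\Sh{\mathcal{C}}{J}}]}$. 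Applying \autoref{thm:hotoposes:internal.whe.and.internal.homotopy.groups} and the corollary immediately following it, this is the condition that $j^* \Ex[\infty]{f}$ induces isomorphisms on $\pi_0$ and on all internal sheaves of higher homotopy groups.

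It remains to identify that condition with the literature definitions. Since $\pi_n$ is built from $\Omega^n$ and an image factorisation, both of which are preserved by $j^*$, the sheaf $\pi_n \argp{j^* \Ex[\infty]{X}, x}$ agrees with the sheafification of the presheaf $C \mapsto \pi_n \argp{\Ex[\infty]{X} \argp{C}, x}$; and because each $\Ex[\infty]{X} \argp{C}$ is a Kan replacement of $X \argp{C}$, these are exactly the traditional sheaves of homotopy groups that appear in the definitions of topological weak equivalence and local weak equivalence. I expect the main obstacle to be precisely this last identification\hairspace ---\hairspace tracking that the internal homotopy groups computed in $\cat{\Sh{\mathcal{C}}{J}}$ coincide with the classical sheafifications of sectionwise homotopy groups\hairspace ---\hairspace but once the preservation properties of $j^*$ are assembled, the verification is essentially formal.
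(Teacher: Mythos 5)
Your part (i) reproduces the paper's argument, but part (iii) contains a genuine gap. You claim (iii) follows by ``the identical argument with boundary inclusions in place of horn inclusions''; that argument only shows that a $J$-local trivial Kan fibration is the same thing as a map with the local right lifting property with respect to the inclusions $\partial \Delta^n \embedinto \Delta^n$ --- which is essentially the definition unwound (as the remark following the theorem points out). A local acyclic fibration in the sense of \citep{Dugger-Isaksen:2004b} is, as the paper's proof records, a local fibration that is also a local weak equivalence, so statement (iii) is precisely the non-trivial assertion that the boundary-lifting characterisation coincides with ``$J$-local Kan fibration and $J$-local weak homotopy equivalence''. In the paper this is supplied, after (i) and (ii), by \autoref{prop:hotoposes:internal.trivial.Kan.fibrations.in.Grothendieck.toposes}, whose proof needs the classifying-topos machinery (Deligne's theorem, Makkai--Reyes, \autoref{lem:logic:separability.of.intersections}). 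Your proposal never invokes that proposition, and without it (iii) is not proved; indeed, if (iii) were a formal unwinding, the paper's claim to obtain a new proof of Proposition 7.2 of \citep{Dugger-Isaksen:2004b} would be empty.

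For (ii), your first half agrees with the paper: reduce, via \autoref{thm:hotoposes:comparison.of.whe.in.Grothendieck.toposes} and the fact that $j^*$ commutes with $\Ex[\infty]$, to the condition that $j^* \Ex[\infty]{f}$ is a Dugger--Isaksen weak equivalence of internal Kan complexes. But the remaining bridge to the literature definitions is exactly where the content lies, and you leave it as ``essentially formal''. The paper closes it differently: it cites Theorem 6.15 of \citep{Dugger-Isaksen:2004b}, which says verbatim that $f$ is a local weak equivalence if and only if $j^* \Ex[\infty]{f}$ is a Dugger--Isaksen weak equivalence, and it identifies Jardine's topological weak equivalences with Dugger--Isaksen's local weak equivalences by the homotopical uniqueness of fibrant replacement. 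Your alternative route through internal homotopy-group sheaves (via \autoref{thm:hotoposes:internal.whe.and.internal.homotopy.groups}) is plausible, but as written it asserts rather than proves the identification of the internal $\pi_n$ with the classical sheaves of homotopy groups (in particular the treatment of arbitrary local basepoints versus morphisms $\discr T \to X$), and it tacitly assumes both literature notions are defined by homotopy sheaves. Either carry out that comparison in detail or quote the Dugger--Isaksen theorem as the paper does.
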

\begin{proof}
(i). Since $j^* : \Func{\op{\mathcal{C}}}{\cat{\Set}} \to \cat{\Sh{\mathcal{C}}{J}}$ preserves finite limits, a morphism $f : X \to Y$ in $\cat{\SPsh{\mathcal{C}}}$ is a $J$-local Kan fibration if and only if the comparison morphism
\[
\kwlim{\Delta^n}{X} \to \kwlim{\Lambda^n_k}{X} \times_{\kwlim{\Lambda^n_k}{Y}} \kwlim{\Delta^n}{Y}
\]
induced by the commutative diagram in $\Func{\op{\mathcal{C}}}{\cat{\Set}}$
\[
\begin{tikzcd}
\kwlim{\Delta^n}{X} \dar[swap]{\kwlim{i}{X}} \rar{\kwlim{\Delta^n}{p}} &
\kwlim{\Delta^n}{Y} \dar{\kwlim{i}{Y}} \\
\kwlim{\Lambda^n_k}{X} \rar[swap]{\kwlim{\Lambda^n_k}{p}} &
\kwlim{\Lambda^n_k}{Y}
\end{tikzcd}
\]
is a $J$-local epimorphism for every horn inclusion $i : \Lambda^n_k \to \Delta^n$. Thus, recalling lemmas \ref{lem:generalities:lifting.properties.and.pullback.products} and \ref{lem:logic:local.epimorphisms.and.covering.sieves}, our $J$-local Kan fibrations are the same as what Jardine calls `local fibrations'.

\bigskip\noindent
(ii). The homotopical uniqueness of fibrant replacement in $\cat{\SSet}$ implies a morphism of simplicial presheaves is a topological weak equivalence in the sense of Jardine if and only if it is a local weak equivalence in the sense of Dugger and Isaksen. Theorem 6.15 in \citep{Dugger-Isaksen:2004b} says that a morphism $f : X \to Y$ in $\cat{\SPsh{\mathcal{C}}}$ is a local weak equivalence if and only if $j^* \Ex[\infty]{f} : j^* \Ex[\infty]{X} \to j^* \Ex[\infty]{Y}$ is a Dugger--Isaksen weak equivalence (in the sense of \autoref{dfn:hotoposes:DI.whe}) of internal Kan complexes in $\cat{\Sh{\mathcal{C}}{J}}$. Since $j^*$ preserves finite limits and all colimits, it commutes with $\Ex[\infty]$; we thus conclude that local weak equivalences in the sense of Dugger and Isaksen are the same as $J$-local weak homotopy equivalences.

\bigskip\noindent
(iii). Local acyclic fibrations are (by definition) the same thing as local fibrations that are local weak equivalences. In view of (i) and (ii), it suffices to verify that a $J$-local trivial Kan fibration is precisely a $J$-local Kan fibration that is a $J$-local weak homotopy equivalence; but this is an immediate consequence of \autoref{prop:hotoposes:internal.trivial.Kan.fibrations.in.Grothendieck.toposes}.
\end{proof}

\begin{remark}
It is worth emphasising that our $J$-local trivial Kan fibrations have a local lifting property \emph{by definition}. We have therefore obtained a new proof of Proposition 7.2 in \citep{Dugger-Isaksen:2004b}.
\end{remark}

\begin{remark}
Since the adjunction counit $j^* j_* \hoto \id_{\cat{\Sh{\mathcal{C}}{J}}}$ is a natural isomorphism, the triangle identity implies the adjunction unit $\id_{\Func{\op{\mathcal{C}}}{\cat{\Set}}} \hoto j_* j^*$ is a natural $J$-local isomorphism. In particular, for any simplicial presheaf $X$, the canonical morphism $X \to j_* j^* X$ is a $J$-local weak homotopy equivalence. Compare Lemma 2.6 in \citep{Jardine:1987}. 
\end{remark}

\begin{remark}
The class of $J$-local weak homotopy equivalences is closed under pullback along $J$-local fibrations, by \autoref{prop:hotoposes:right.properness.of.internal.whe}, and it is also closed under pushout along monomorphisms, by \autoref{prop:hotoposes:left.properness.of.internal.whe}. The latter fact can also be proved by using the fact that the cofibrations in Jardine's model structure are precisely the monomorphisms of simplicial presheaves.
\end{remark}

\needspace{3.0\baselineskip}
We conclude with a new proof of an old result:

\begin{thm}
The full subcategory $\mathcal{W}_J$ of $\Func{\mathbf{2}}{\cat{\SPsh{\mathcal{C}}}}$ spanned by the $J$-local weak homotopy equivalences is closed under filtered colimits and is an accessible category.
\end{thm}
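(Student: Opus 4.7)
The plan is to realise $\mathcal{W}_J$ as an iterated preimage of an accessibly embedded full subcategory along accessible functors, and then invoke the standard pseudopullback theorem for accessible categories. By definition, $\mathcal{W}_J$ is the preimage of the full subcategory $\mathcal{V} \subseteq \Func{\mathbf{2}}{\mbfs \cat{\Sh{\mathcal{C}}{J}}}$ of internal weak homotopy equivalences under the sheafification functor $j^* : \Func{\mathbf{2}}{\cat{\SPsh{\mathcal{C}}}} \to \Func{\mathbf{2}}{\mbfs \cat{\Sh{\mathcal{C}}{J}}}$; and by \autoref{thm:hotoposes:comparison.of.whe.in.Grothendieck.toposes}, $\mathcal{V}$ in turn is the preimage under the endofunctor $\Ex[\infty]$ of the full subcategory $\mathcal{D}$ of Dugger--Isaksen weak equivalences of internal Kan complexes.

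For closure under filtered colimits, $j^*$ preserves all colimits (being a left adjoint), and $\Ex[\infty]$ preserves filtered colimits on $\Func{\mathbf{2}}{\mbfs\cat{\Sh{\mathcal{C}}{J}}}$ because it is a sequential colimit of finite-weighted-limit functors and filtered colimits commute with finite limits in any Grothendieck topos. So it suffices to check that $\mathcal{D}$ is closed under filtered colimits. But the defining conditions for membership in $\mathcal{D}$ — namely, the internal Kan complex conditions on the domain and codomain and the Dugger--Isaksen condition on $f$ itself — are all of the form ``a certain canonical map in a finite diagram constructed from $f$ is a regular epimorphism in $\cat{\Sh{\mathcal{C}}{J}}$,'' and such conditions persist under filtered colimits because filtered colimits preserve both finite limits and regular epimorphisms in a Grothendieck topos.

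For accessibility, the categories $\Func{\mathbf{2}}{\cat{\SPsh{\mathcal{C}}}}$ and $\Func{\mathbf{2}}{\mbfs \cat{\Sh{\mathcal{C}}{J}}}$ are locally presentable (as Grothendieck toposes), and $j^*$ and $\Ex[\infty]$ are both accessible; so by two applications of the pseudopullback theorem for accessible categories (Theorem 2.43 in \citep{LPAC}, or Theorem 5.1.6 in \citep{Makkai-Pare:1989}), accessibility of $\mathcal{W}_J$ reduces to showing that $\mathcal{D}$ is accessible and accessibly embedded in $\Func{\mathbf{2}}{\mbfs \cat{\Sh{\mathcal{C}}{J}}}$. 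This reduces in turn to the general principle that in a Grothendieck topos, a class of morphisms defined by requiring certain canonical maps in a finitely-constructed diagram to be regular epimorphisms is accessibly embedded in the arrow category — a consequence of the fact that image factorisations in a topos are an accessible construction.

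The main obstacle I anticipate is this last step: packaging the standard facts about image factorisations and accessibility in locally presentable categories into a clean verification that $\mathcal{D}$ is accessibly embedded. One might alternatively hope to exploit the classifying-topos presentation $\mathcal{B}_\mathrm{whe} \simeq \Func{\mathcal{K}}{\cat{\Set}}$ (coming from \autoref{thm:ssets:Raptis-Rosicky} together with \autoref{thm:logic:theories.of.presheaf.type}) and Diaconescu's theorem to identify $\mathcal{V}$ with the accessible category of flat functors $\op{\mathcal{K}} \to \cat{\Sh{\mathcal{C}}{J}}$; but the natural functor $c \mapsto c^* u$ from geometric morphisms into the arrow category is essentially surjective onto $\mathcal{V}$ while being neither full nor faithful, so this alternative route does not directly deliver $\mathcal{V}$ as a full subcategory, and the $\Ex[\infty]$-based argument sketched above seems cleaner.
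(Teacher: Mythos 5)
Your decomposition is genuinely different from the paper's, and in outline it works. The paper never mentions $\Ex[\infty]$ in this proof: it identifies $\mathcal{W}$ (your $\mathcal{V}$) with the category of geometric morphisms $\cat{\Sh{\mathcal{C}}{J}} \to \mathcal{B}_\mathrm{whe}$, quotes the known accessibility of such categories of geometric morphisms, checks closure under filtered colimits via the torsor description, and then applies the pseudopullback theorem \emph{once}, along $j^*$. You instead trade that input for a second pseudopullback along $\Ex[\infty]$, reducing everything to the class $\mathcal{D}$ of Dugger--Isaksen weak equivalences of internal Kan complexes. Your filtered-colimit argument is fine --- it is essentially \autoref{lem:hotoposes:internal.Kan.fibrations.and.sequential.colimits} with $\omega$ replaced by an arbitrary filtered index, using that finite weighted limits and epimorphisms in a Grothendieck topos are stable under filtered colimits, and that $j^*$ and $\Ex[\infty]$ preserve such colimits.

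Two caveats. First, the accessibility (and accessible embeddedness) of $\mathcal{D}$ is the one step you assert rather than prove. It is true, and it can be completed with the tools you already cite: the functor sending $f$ to each canonical comparison morphism is built from finite weighted limits, hence accessible; the epimorphisms form an accessible, accessibly embedded subcategory of the arrow category because they are the preimage of the isomorphisms under the (accessible) image-factorisation functor; and $\mathcal{D}$ is the intersection of countably many such subcategories (one for each $n$ and $k$), so you also need the limit/intersection theorem for accessible categories from the same chapter of Makkai--Paré, not just the pseudopullback theorem. As written, the decisive accessibility input is still taken on faith, so this is a (fixable) gap. Second, your reason for discarding the classifying-topos route is mistaken: $c \mapsto c^* u$ is fully faithful, not merely essentially surjective onto $\mathcal{V}$. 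Indeed $\mathcal{B}_\mathrm{whe}$ is a subtopos (via \autoref{lem:logic:topologies.of.presheaf.type}) of the classifying topos for morphisms of simplicial objects, whose category of points in $\mathcal{S}$ is equivalent to $\Func{\mathbf{2}}{\mbfs \mathcal{S}}$, and a geometric inclusion $i$ induces a fully faithful comparison on categories of points (2-cells transfer across the isomorphism $i^* i_* \cong \id$). That full faithfulness is precisely what lets the paper obtain accessibility of $\mathcal{V}$ for free and finish with a single pseudopullback, which is rather shorter than the route you propose.
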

\begin{proof}
First, consider the full subcategory $\mathcal{W}$ of $\Func{\mathbf{2}}{\cat{\SSh{\mathcal{C}}{J}}}$ spanned by the internal weak homotopy equivalences. By definition, it is equivalent to the category of geometric morphisms $\cat{\Sh{\mathcal{C}}{J}} \to \mathcal{B}_\mathrm{whe}$, and such categories are known to be accessible.\footnote{See Remark 2.3.12 in \citep[Part D]{Johnstone:2002b}.} If we then identify geometric morphisms $\cat{\Sh{\mathcal{C}}{J}} \to \mathcal{B}_\mathrm{whe}$ with torsors $\op{\mathcal{K}} \to \cat{\Sh{\mathcal{C}}{J}}$, where $\mathcal{K}$ is the category of weak equivalences of finite simplicial sets, it is straightforward to see that $\mathcal{W}$ is closed under filtered colimits as a subcategory of $\Func{\mathbf{2}}{\cat{\SSh{\mathcal{C}}{J}}}$.

Since $j^* : \Func{\op{\mathcal{C}}}{\cat{\Set}} \to \cat{\Sh{\mathcal{C}}{J}}$ is a left adjoint, the induced functor $j^* : \Func{\mathbf{2}}{\cat{\SPsh{\mathcal{C}}}} \to \Func{\mathbf{2}}{\cat{\SSh{\mathcal{C}}{J}}}$ is accessible. It is clear that $\mathcal{W}_J$ is the preimage of $\mathcal{W}$ under $j^*$, so it is closed under filtered colimits. Moreover, $\mathcal{W}$ is a full replete subcategory of $\Func{\mathbf{2}}{\cat{\SSh{\mathcal{C}}{J}}}$, so we may apply the pseudopullback theorem\footnote{See Theorem 5.1.6 in \citep{Makkai-Pare:1989} or Exercise 2.n in \citep{LPAC}.} to deduce that $\mathcal{W}_J$ is also accessible.
\end{proof}

\ifdraftdoc

\else
  \printbibliography
\fi

\end{document}